\title{Characterization of \\
Higher-Order Sobolev Spaces on the Sphere \\
via Generalized Averaging of Function}
\date{\today}
\author{Ikhsan Maulidi\thanks{Department of Mathematics, Universitas Syiah Kuala, Banda Aceh, Indonesia} \ \thanks{Graduate School of Natural Science and Technology, Kanazawa University, Kanazawa, Japan. ikhsanmaulidi@stu.kanazawa-u.ac.jp} and Hiroshi Ohtsuka\thanks{Faculty of Mathematics and Physics, Kanazawa University, Kanazawa, Japan. ohtsuka@se.kanazawa-u.ac.jp}}
\numberwithin{equation}{section}
\newcommand{\vep}{\varepsilon}
\newcommand{\lap}{\Delta}
\newcommand{\tend}{\longrightarrow}
\newcommand{\R}{\mathbf{R}}
\newcommand{\C}{\mathbf{C}}
\newcommand{\N}{\mathbf{N}}
\newcommand{\calD}{\mathscr{D}}
\newcommand{\Hn}[2]{H^{#1}\left(#2\right)}
\newcommand{\Le}[2]{L^{#1}\left(#2\right)}
\newcommand{\clo}[1]{\overline{#1}}
\newcommand{\sphere}[1]{\mathbf{S}^{#1}}
\newcommand{\aka}[1]{{\color{black} #1}}
\newcommand{\innerproduct}[2]{\langle #1, #2 \rangle}
\begin{document}
\newtheorem{thm}{Theorem}[section]
\newtheorem{lem}[thm]{Lemma}
\newtheorem{prop}[thm]{Proposition}
\newtheorem{cor}[thm]{Corollary}
\newtheorem{remark}[thm]{Remark}
\newtheorem{defi}[thm]{Definition}

\maketitle
\begin{abstract}
We present a new characterization of higher-order Sobolev spaces on the sphere. Building on the approach of Barcel\'o et al. (2020), we refine the square function they introduced for this purpose. In particular, we provide a detailed analysis of the weight function and the averaging range in its definition. Our results demonstrate that averaging can be performed using a weight function from a broad class, and that the averaging domain can be restricted to a local coordinate patch on the sphere.

\mbox{} \\\\
\textit{2020 Mathematics Subject Classification}: Primary 46E35, Secondary 43A90. \\\\
\textit{Key Words and Phrases:} Sobolev spaces, Spherical harmonics, Square function.
\end{abstract}

\section{Introduction}\label{sec-intro}

Sobolev spaces play a crucial role in modern mathematical analysis, particularly in the study of partial differential equations (PDEs). These spaces attract considerable attention from scientists due to their ability to integrate the concepts of regularity and integrability. One of the traditional methods commonly used in the analysis of Sobolev spaces is the distributional derivative, which allows for the generalization of classical derivatives to functions that are not smooth. However, this approach has limitations, particularly when applied to non-Euclidean spaces such as metric spaces, where a canonical linear structure is absent, making classical derivatives difficult to define.

As an alternative, Sobolev spaces can be characterized without relying on classical derivatives. One notable approach, introduced by  Haj\l asz (1996) \cite{hl16}, involves pointwise inequalities. This pioneering work inspired many subsequent studies, including the work of Alabern et al. (2012) \cite{amv12}, who developed a characterization of Sobolev spaces using the square function based on ball averages in $\R^d$ with Lebesgue measure. This approach provides an alternative way to define Sobolev norms without relying on classical derivatives.

Building on this idea, Barcel\'o et al. (2020) \cite{blp20} developed a similar characterization for the Sobolev spaces $\Hn{\alpha}{\sphere{d-1}}$ on the $d-1$ dimensional sphere $\sphere{d-1}$ ($d\geq 2$) introducing the following square function:
\begin{equation}
\label{sfs}
S_\alpha(f)^2(\xi):= \int_0^\pi \left|\frac{A_tf(\xi)-f(\xi)}{t^\alpha}\right|^2\frac{dt}{t},\quad \xi\in\sphere{d-1},
\end{equation}
where $f$ is an integrable function on $\sphere{d-1}$ and $A_t f(\xi)$ denotes the average of $f$ over the spherical cap $C(\xi,t)$ centered at $\xi$ with radius $t \in (0,\pi)$:
\begin{align}
A_tf(\xi):=\frac{1}{|C(\xi,t)|}\int_{C(\xi,t)}f(\tau)d\sigma(\tau),\; C(\xi,t):= \{ \eta \in \sphere{d-1} : \xi \cdot \eta \geq \cos t \},\label{avs}
\end{align}
with $|C(\xi,t)|$ is measured by the uniform surface measure $\sigma$ on $\sphere{d-1}$ induced from the Lebesgue measure on $\R^d$.
\begin{thm}[{\cite[Theorem 1.1]{blp20}}]\label{blpth}
For $0<\alpha<2$, the following are equivalent:
\begin{quote}
\rm
(1) $f\in \Hn{\alpha}{\sphere{d-1}}$. \qquad (2) $f\in\Le{2}{\sphere{d-1}}$ and $S_\alpha(f)\in\Le{2}{\sphere{d-1}}$.
\end{quote}
\end{thm}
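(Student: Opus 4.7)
The plan is to decompose $f$ in spherical harmonics and exploit the rotational invariance of $A_t$ to diagonalize the problem. Recall the orthogonal decomposition $\Le{2}{\sphere{d-1}}=\bigoplus_{k\geq0}\mathscr{H}_k$, where $\mathscr{H}_k$ is the space of spherical harmonics of degree $k$, consisting of eigenfunctions of $-\lap_{\sphere{d-1}}$ with eigenvalue $\mu_k:=k(k+d-2)$. Writing $\pi_k$ for the orthogonal projection onto $\mathscr{H}_k$, the Sobolev space $\Hn{\alpha}{\sphere{d-1}}$ is characterized by the finiteness of $\sum_k (1+\mu_k)^\alpha\norm{\pi_kf}{\Le{2}{\sphere{d-1}}}^2$.

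The first step is to apply the Funk--Hecke theorem: since $A_t$ is a zonal convolution operator on the sphere, it acts on each $\mathscr{H}_k$ as a scalar multiple $a_k(t)$, expressible via normalized Gegenbauer polynomials of argument $\cos t$. Consequently
\begin{equation}
A_tf(\xi)-f(\xi)=\sum_{k\geq 0}(a_k(t)-1)(\pi_kf)(\xi),
\end{equation}
and Parseval's identity together with Fubini's theorem yields
\begin{equation}
\norm{S_\alpha(f)}{\Le{2}{\sphere{d-1}}}^2=\sum_{k\geq 0}\norm{\pi_kf}{\Le{2}{\sphere{d-1}}}^2\,M_k,\qquad M_k:=\int_0^\pi\frac{|a_k(t)-1|^2}{t^{2\alpha+1}}\,dt.
\end{equation}
Comparing this with the intrinsic definition of the Sobolev norm, the entire theorem reduces to establishing the two-sided bound $M_k\asymp (1+k)^{2\alpha}$ uniformly in $k\geq 0$, from which both implications $(1)\Leftrightarrow(2)$ follow at once.

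The main obstacle is this sharp estimate of $M_k$, which I would prove by splitting the integration at the natural threshold $t\sim 1/(1+k)$. In the near regime $t\lesssim 1/(1+k)$, a Taylor expansion of $A_t$ (which on smooth $g$ reads $A_tg=g+c_d t^2\lap_{\sphere{d-1}}g+O(t^4)$ with a positive constant $c_d$) yields $1-a_k(t)\sim c_d\mu_k t^2$, so this part of $M_k$ is comparable to $\mu_k^2\int_0^{1/(1+k)}t^{3-2\alpha}\,dt\asymp(1+k)^{2\alpha}$; this computation relies on $\alpha<2$ precisely to ensure integrability at the origin, which is the source of the upper bound on $\alpha$ in the statement. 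In the far regime $t\gtrsim 1/(1+k)$, classical bounds for Gegenbauer polynomials give $|a_k(t)|\lesssim (kt)^{-(d-2)/2}$ together with the trivial $|a_k(t)-1|\leq 2$, so this contribution is dominated by $\int_{1/(1+k)}^\pi t^{-2\alpha-1}\,dt\lesssim (1+k)^{2\alpha}$, while a matching lower bound follows from the fact that $a_k(t)$ stays uniformly away from $1$ on a set of positive measure near $t\sim 1/(1+k)$. The uniform asymptotic analysis of $a_k(t)$ in both parameters, requiring the fine theory of Jacobi/Gegenbauer polynomials, is the technically heaviest component of the argument.
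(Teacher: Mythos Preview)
Your proposal is correct and follows essentially the same route as \cite{blp20} and as the paper's own proof of the generalized statement (Proposition~\ref{equinorm} for $n=0$, via Lemmas~\ref{LocUpperboundI} and~\ref{LocLowerboundI}): diagonalize $A_t$ on spherical harmonics (Lemma~\ref{coef-A}), reduce by Parseval to the scalar estimate $\int_0^\pi|a_k(t)-1|^2\,t^{-2\alpha-1}\,dt\asymp k^{2\alpha}$, and prove this by splitting at $t\sim 1/k$ with a Taylor expansion of the Legendre polynomial in the near regime and the trivial bound $|a_k(t)-1|\leq 2$ in the far regime. One minor remark: for $0<\alpha<2$ the fine Gegenbauer/Jacobi asymptotics you allude to are not actually needed---the second-order Taylor estimate and the crude bound $|a_k(t)|\leq 1$ already suffice---so the analysis is lighter than you anticipate; the sharper pointwise decay \eqref{Psharp} only enters in the endpoint case $\alpha=2n$.
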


To address higher-order cases $\alpha \geq 2$, they introduced an extended square function. For $\alpha \in (2n,2(n+1))$ with $n\in\N$ and for integrable functions $g_1,\cdots,g_n$
\begin{equation}\label{eq03b}
S_\alpha(f,g_1,...,g_n)(\xi)^2:=\int_0^\pi \Bigg|A_tf(\xi)-f(\xi)-\sum_{k=1}^ng_k(\xi)A_t(|\xi-\cdot|^{2k})(\xi)\Bigg|^2\frac{dt}{t^{2\alpha+1}},
\end{equation}
and for $\alpha=2n$,
\begin{align*}
S_{2n}(f, g_1, \dots, g_n)(\xi)^2 &:= \int_0^\pi \Bigg| A_tf(\xi)-f(\xi) - \sum_{k=1}^{n-1} g_k(\xi) A_t \left( \left| \xi -\cdot \right|^{2k}\right)(\xi) \nonumber\\ 
&- A_t(g_n)(\xi) A_t \left( \left| \xi - \cdot\right|^{2n} \right)(\xi) \Bigg|^2 \frac{dt}{t^{2\alpha + 1}},
\end{align*}
for $\xi \in \sphere{d-1}.$
\begin{thm}[{\cite[Theorem 1.3]{blp20}}]\label{blpth2}
For $\alpha>2$ and $n\in\N$ satisfying $2n\leq\alpha<2(n+1)$, the following are equivalent:
\begin{quote}
\rm
(1) $f\in \Hn{\alpha}{\sphere{d-1}}$. \\
(2) $f\in\Le{2}{\sphere{d-1}}$ and there exist \(g_1,g_2,...,g_n \in \Le{2}{\sphere{d-1}}\) such that $S_\alpha^{\rho,T}(f,g_1,g_2,...,g_n)\in\Le{2}{\sphere{d-1}}$.
\end{quote}
\end{thm}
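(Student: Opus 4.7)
The plan is to diagonalize both $A_t$ and the Sobolev-type norm by spherical harmonics, following the Plancherel strategy that proved Theorem~\ref{blpth} and extending it to capture higher-order cancellations. Write $f=\sum_{\ell\geq 0}f_\ell$ for the spherical harmonic expansion of $f$, and similarly for each $g_k$. The Funk--Hecke formula gives $A_tf_\ell=\beta_\ell(t)f_\ell$ for an explicit multiplier $\beta_\ell(t)$ (an averaged Gegenbauer polynomial), and by rotational invariance $A_t(|\xi-\cdot|^{2k})(\xi)$ depends only on $t$ and $k$; call it $\gamma_k(t)$, with $\gamma_k(t)\sim c_{d,k}t^{2k}$ as $t\to 0$.

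For $(1)\Rightarrow(2)$: given $f\in\Hn{\alpha}{\sphere{d-1}}$, I would choose $g_k$ so that $f(\xi)+\sum_{k=1}^n g_k(\xi)\gamma_k(t)$ matches the Taylor expansion of $A_tf(\xi)$ in $t^2$ through the appropriate order. Up to explicit dimensional constants this forces $g_k=c_{d,k}(-\lap_{\sphere{d-1}})^kf$, which lies in $\Le{2}{\sphere{d-1}}$ since $2k\leq\alpha$. By Plancherel the $\Le{2}{\sphere{d-1}}$ norm of the square function reduces to
\[
\sum_{\ell\geq 0}\norm{f_\ell}{\Le{2}{\sphere{d-1}}}^2\int_0^\pi\Bigl|\beta_\ell(t)-1-\sum_{k=1}^{n}c_{d,k}\lambda_\ell^k\gamma_k(t)\Bigr|^2\frac{dt}{t^{2\alpha+1}},
\]
where $\lambda_\ell=\ell(\ell+d-2)$. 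Bessel-type asymptotics of $\beta_\ell$ (smooth Taylor expansion on $t\lesssim\lambda_\ell^{-1/2}$, oscillatory decay beyond) show that this weight is $\asymp(1+\lambda_\ell)^\alpha$, yielding the upper bound.

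For $(2)\Rightarrow(1)$: Plancherel again reduces matters to a family of scalar weighted $\Le{2}{}$ problems on $(0,\pi)$, one per spherical harmonic coefficient of $f$. The decisive step is the lower bound
\[
\inf_{a_1,\dots,a_n\in\C}\int_0^\pi\Bigl|\beta_\ell(t)-1-\sum_{k=1}^n a_k\gamma_k(t)\Bigr|^2\frac{dt}{t^{2\alpha+1}}\gtrsim(1+\lambda_\ell)^\alpha,
\]
which expresses the best approximation of $\beta_\ell(t)-1$ by $\mathrm{span}\{\gamma_1,\ldots,\gamma_n\}$ in this weighted $\Le{2}{}$ space. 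Once established, this inequality forces the Sobolev regularity of $f$ and identifies the $g_k$ as the unique minimizers; an induction on $n$ anchored by Theorem~\ref{blpth} covers the full parameter range.

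The main obstacle I anticipate is the borderline case $\alpha=2n$. The naive choice $g_n=c_{d,n}(-\lap_{\sphere{d-1}})^n f$ sits exactly on the boundary of $\Le{2}{\sphere{d-1}}$, which is why the definition must be modified so that $A_t(g_n)$ replaces $g_n$ in the $k=n$ slot. The extra factor $\beta_\ell(t)$ that this modification inserts into the per-coefficient integrand is precisely what preserves $\Le{2}{\sphere{d-1}}$ integrability at the critical exponent, and verifying that the modified optimization still produces exactly the weight $(1+\lambda_\ell)^{2n}$, with no loss or gain at the boundary, is the most technically delicate part of the proof.
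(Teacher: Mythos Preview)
Your overall strategy---spherical harmonic diagonalization, the Funk--Hecke multiplier $A_t Y_\ell = m_{\ell,t} Y_\ell$, Taylor expansion of $m_{\ell,t}$ to select the $g_k$, and a split at $t\sim 1/\ell$ to show the per-coefficient weight is $\sim\ell^{2\alpha}$---matches the paper's (which actually proves the generalization, Theorem~\ref{main}; the present statement is quoted from \cite{blp20}). One minor correction: your $g_k=c_{d,k}(-\Delta)^k f$ with a \emph{scalar} $c_{d,k}$ is an oversimplification, since the Taylor coefficient $c_{k,\ell}=(-1)^k P_{l,d}^{(k)}(1)/k!$ has $c_{k,\ell}/(2^k\lambda_\ell^k)\sim 1$ but is not constant in $\ell$; the paper therefore defines bounded multiplier operators $T_k$ (Lemma~\ref{T}) and takes $g_k=T_k((-\Delta)^k f)$. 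For $(2)\Rightarrow(1)$ your best-approximation lower bound is a genuinely cleaner route than the paper's: the weighted integral is $+\infty$ unless each $a_k$ equals its forced Taylor value, and there Lemma~\ref{LocLowerboundI} gives $\sim\ell^{2\alpha}$, so the infimum is $\gtrsim\ell^{2\alpha}$ and Plancherel finishes. The paper instead passes through Poisson regularization (Propositions~\ref{commute}--\ref{g-regularize}) to identify $P_r g_k=T_k((-\Delta)^k P_r f)$ pointwise, deduce $f\in H^{2n}$, and then invoke Proposition~\ref{equinorm}; this buys the explicit identification of the $g_k$ as a byproduct.

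Your explanation of the critical case $\alpha=2n$, however, is wrong. If $f\in H^{2n}$ then $g_n=T_n((-\Delta)^n f)$ \emph{is} in $L^2$; integrability of $g_n$ is not the obstruction. The real problem is a logarithmic divergence in the \emph{upper} bound for the square-function norm: from $|M_{n,\ell,t}|\lesssim \ell^{2n}t^{2n}$ on the range $t\gtrsim 1/\ell$ one only gets $\int_{1/\ell}^T |M_{n,\ell,t}|^2\,dt/t^{4n+1}\sim \ell^{4n}\log\ell$, which is too large. Replacing $g_n$ by $A_t g_n$ inserts the extra factor $m_{\ell,t}$, whose decay (estimate~\eqref{Psharp}) suppresses that tail; this is exactly what Lemma~\ref{LocUpperboundJ} accomplishes. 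So the modification is needed to make the upper bound $\lesssim\ell^{4n}$ hold, not to put $g_n$ into $L^2$.
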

Although these theorems appear structurally similar to the results of Alabern et al., the proof techniques used differ significantly. Barcel\'o et al. utilized the concept of spherical harmonics and Legendre polynomials, which are naturally related to the geometric symmetries of the sphere. On the other hand, Alabern et al. utilized the Fourier transform and relied on the geometric symmetries of the Euclidean space $\R^d$. They also provided a more general setting involving  $W^{\alpha,p}(\R^d)$. 

In this paper, we extend the work of Barcel\'o et al. (2020) \cite{blp20} by generalizing the square function through the use of weight functions in the angular direction $[0,T]$ during the averaging process. Unlike Barcel\'o et al., who fixed $T = \pi$ (see Definition \ref{def-weight} for the class of weight functions and Definition \ref{def-ga} for our generalized square function $A^\rho_t$), we consider integrals localized to $[0, T] \subset [0, \pi]$.  This extension allows for varying resolution scales in the averaging process. We note that similar weight function was introduced in \aka{\cite[Cor 5.2]{s17}} in the Euclidean setting $\R^d$ considered by Alabern et al. However, such localization appears to be difficult to replicate in the Euclidean framework they studied. 

Although our proof shares similarities with Barcel\'o et al., we utilize Legendre polynomials in a different coordinate system, which simplifies computations despite the generalization. The primary motivation of this research is to extend the definition of Sobolev spaces to more general compact manifolds, such as surfaces with non-constant curvature or manifolds with complex topological constraints. We believe that weighted ball averages and localized range of angular variable to $[0, T] \subset [0, \pi]$  offer a promising first step toward adapting these arguments to local coordinates on general manifolds.

To formalize our generalization, for a fixed unit vector \(\xi \in \sphere{d-1}\), we define the Euclidean angle \(\theta \in [0, \pi]\) between \(\tau \in \sphere{d-1}\) and \(\xi\). The uniform surface-area measure \(\sigma\) on \(\sphere{d-1}\) can be expressed as  
\[
d\sigma = d\sigma(\theta, \tau') = \sin^{d-2}\theta \, d\theta \, d\sigma',
\]  
where \(\tau' \in \sphere{d-2}\) and \(\sigma'\) denotes the uniform surface-area measure on \(\sphere{d-2}\), cf \cite[(1.17)]{ah12}. 

We introduce the following class of wight functions for generalized averaging:
\begin{defi}[Weight functions] \label{def-weight}
For $T\in (0,\pi]$ and $0<t_0<T_0\leq T$ satisfying $T_0<\pi$,  we set 
$$
 W(T, t_0, T_0):=\left\{\rho\in\Le{2}{0,T}\mid \rho\geq 0\; \text{a.e.},\; \int_{t_0}^{T_0}\rho(\theta)d\theta>0\right\}
$$
\end{defi}
Alternatively, we may assume only
\begin{equation}
\label{rho-cond}
\text{$\rho\geq 0$ a.e. and $\int_0^T\rho(\theta)d\theta>0$}
\end{equation}
for $\rho\in\Le{2}{0,T}$. Indeed if this holds, there exist $t_0$ and $T_0$ satisfying $\rho\in W(T, t_0, T_0)$ from an elementary argument, cf \cite[Lemma 3.3]{s18}. 

We also note that a broader class of measures can be considered for $\rho$. Specifically, $\rho$ may be taken from the set of Borel probability measures on $[0,T]$ that satisfy
$$
\rho([t_0,T_0])>0
$$
for some $t_0$ and $T_0$ satisfying the assumptions in Definition \ref{def-weight}. Consequently, our result also encompasses the $\sphere{d-1}$ version of \cite[Theorem 1.4]{hl17}, which corresponds to the case where $\rho$ is a Dirac measure. However, in this case, the definition of $A^\rho_t$ below becomes slightly more involved, requiring, for example, a regular approximation of $f$ and the pullback of $\rho$ via the dilation map $\theta \mapsto T\theta/t$. For simplicity, we restrict our attention to $\rho \in \Le{2}{0,T}$.

\begin{defi}[The generalized averaging] \label{def-ga}
For a fixed $\rho\in W(T,t_0,T_0)$ and for $f\in\Le{2}{\sphere{d-1}}$ on $\sphere{d-1}$, we define
\begin{equation}\label{ga}
A^\rho_{t}f(\xi):=z_t^{-1}\int_{C(\xi,t)}f(\theta,\tau')\rho\left(\frac{T}{t}\theta\right) d\sigma(\theta,\tau'),
\end{equation}
the generalized averaging of $f$ around $\xi$ with radius $t\in(0,T]$, where $z_t$ is chosen in order to satisfy
\begin{equation*}
z_t^{-1}\int_{C(\xi,t)} \rho\left(\frac{T}{t}\theta\right) d\sigma(\theta,\tau') = 1,
\end{equation*}
that is,
\begin{align}
z_t&=\left|\sphere{d-2}\right|\frac{t}{T}\int_0^T \sin^{d-2}\left(\frac{t\theta}{T}\right) \rho(\theta)d\theta.
\label{sigma-t}
\end{align}
\end{defi}
Under the assumptions that \(\rho\in W(T,t_0,T_0)\), the generalized averaging operator \(A_t^\rho\) is well-defined for all \(f \in L^2(\sphere{d-1})\).  Moreover, the averaging operator \(A_tf\) introduced by Barcel\'o et al. \cite{blp20} corresponds to the special case where \(T = \pi\) and \(\rho(\theta) \equiv 1\) for \(\theta \in [0, \pi]\). In this case, we may choose any $t_0$ and $T_0$ satisfying $0<t_0<T_0<\pi$. 

We fix $T$, $t_0$, $T_0$, and $\rho\in  W(T, t_0, T_0)$ throughout this paper. 

\begin{defi}[The generalized square function] \label{SqFunction}
We define the generalized square function $S^{\rho,T}_\alpha$ for $f\in\Le{2}{\sphere{d-1}}$ as follows: 
\begin{itemize}
\item
For $0<\alpha<2$, we define
\begin{equation} \label{lc01}
S_{\alpha}^{\rho,T}(f)^2(\xi):= \int_0^T \Bigg|\frac{A^\rho_tf(\xi)-f(\xi)}{t^\alpha}\Bigg|^2\frac{dt}{t}.
\end{equation}
\item
For  $\alpha \in (2n,2(n+1))$ with $n\in\N$ and $g_1,\cdots,g_n\in\Le{2}{\sphere{d-1}}$, we define
\begin{align}
&S^{\rho,T}_\alpha(f,g_1,...,g_n)(\xi)^2\nonumber\\
&:=\int_0^T \Bigg|A^\rho_tf(\xi)-f(\xi)-\sum_{k=1}^ng_k(\xi)A^\rho_t(|\xi-\cdot|^{2k})(\xi)\Bigg|^2\frac{dt}{t^{2\alpha+1}},\label{eq03b}
\end{align}
\item
For $\alpha=2n$ and $n\in\N$ and $g_1,\cdots,g_n\in\Le{2}{\sphere{d-1}}$, we define
\begin{align}\label{eq3c}
S_{2n}^{\rho,T}&(f, g_1, \dots, g_n)(\xi)^2 \nonumber\\
&:= \int_0^{T} \Bigg| A^\rho_tf(\xi)-f(\xi) - \sum_{k=1}^{n-1} g_k(\xi) A_t^\rho \left( \left| \xi -\cdot \right|^{2k}\right)(\xi) \nonumber\\ 
&\qquad\qquad- A_t^\rho(g_n)(\xi) A_t^\rho \left( \left| \xi - \cdot\right|^{2n} \right)(\xi) \Bigg|^2 \frac{dt}{t^{2\alpha + 1}}.
\end{align}
\end{itemize}
\end{defi}
Corresponding to Theorems \ref{blpth} we have already the following result: 
\begin{thm}[{\cite[Theorem 1.5]{s18}}]\label{themainshortpaper}
For fixed $T\in (0,\pi]$, $\rho\in\Le{\infty}{0,T}$ satisfying \eqref{rho-cond}, and $0<\alpha<2$, the following are equivalent:
\begin{quote}
\rm
(1) $f\in \Hn{\alpha}{\sphere{d-1}}$. \qquad (2) $f\in\Le{2}{\sphere{d-1}}$ and $S_\alpha^{\rho,T}(f)\in\Le{2}{\sphere{d-1}}$.
\end{quote}
\end{thm}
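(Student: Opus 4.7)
The plan is to adapt the spherical-harmonic / Funk--Hecke method of Barcel\'o et al.\ \cite{blp20} to the present weighted, localized setting. Since $\rho(T\theta/t)$ depends only on the polar angle $\theta$ between $\tau$ and $\xi$, the operator $A^\rho_t$ is zonal and hence acts as a scalar multiplier on each space $\mathcal H_k$ of degree-$k$ spherical harmonics. Expanding $f=\sum_{k\ge 0}Y_k$ with $Y_k\in\mathcal H_k$, decomposing $d\sigma=\sin^{d-2}\theta\,d\theta\,d\sigma'$, and using the mean-value identity
\[
\frac{1}{|\sphere{d-2}|}\int_{\sphere{d-2}} Y_k(\theta,\tau')\,d\sigma'(\tau')=R_k(\theta)\,Y_k(\xi),\qquad R_k(\theta):=\frac{C_k^{(d-2)/2}(\cos\theta)}{C_k^{(d-2)/2}(1)},
\]
together with the substitution $\theta=ut$ in \eqref{ga}--\eqref{sigma-t}, one obtains $A^\rho_t Y_k=m^\rho_k(t)\,Y_k$ with
\[
m^\rho_k(t)=\frac{\int_0^1 R_k(ut)\sin^{d-2}(ut)\rho(Tu)\,du}{\int_0^1 \sin^{d-2}(ut)\rho(Tu)\,du}.
\]
By Parseval and Fubini, $\norm{S^{\rho,T}_\alpha(f)}{\Le{2}{\sphere{d-1}}}^2=\sum_{k} I_k\,\norm{Y_k}{\Le{2}{\sphere{d-1}}}^2$ with $I_k:=\int_0^T|m^\rho_k(t)-1|^2 t^{-2\alpha-1}\,dt$, and the standard equivalence $\norm{f}{\Hn{\alpha}{\sphere{d-1}}}^2\asymp\sum_k(1+k)^{2\alpha}\norm{Y_k}{\Le{2}{\sphere{d-1}}}^2$ reduces the theorem to the two-sided multiplier estimate $I_k\asymp(1+k)^{2\alpha}$ uniformly in $k$ (the case $k=0$ is trivial, as $m^\rho_0\equiv 1$).

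For the upper bound I split the integration at $t=1/k$. The ODE $R_k''+(d-2)\cot(s)R_k'+k(k+d-2)R_k=0$ with $R_k(0)=1$, $R_k'(0)=0$ produces the Taylor expansion $R_k(s)=1-\frac{k(k+d-2)}{2(d-1)}s^2+O(k^4 s^4)$, hence $|R_k(s)-1|\le Ck^2 s^2$ for $ks\le 1$; combined with the uniform bound $|R_k|\le 1$ on $[0,\pi]$ this gives $|R_k(s)-1|\le C\min(k^2 s^2,1)$. Inserting this into the numerator of $m^\rho_k(t)-1$ and using $|m^\rho_k(t)|\le 1$ in the complementary regime yields $|m^\rho_k(t)-1|\le C\min(k^2 t^2,1)$, whence
\[
I_k\le C\int_0^{1/k} k^4 t^{3-2\alpha}\,dt+C\int_{1/k}^T t^{-2\alpha-1}\,dt\le C(1+k)^{2\alpha},
\]
both integrals being of the correct size precisely because $0<\alpha<2$.

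For the lower bound, on the much smaller interval $t\in(0,\vep/k]$ the quadratic term in the Taylor expansion dominates the remainder uniformly when $\vep$ is small, so
\[
m^\rho_k(t)-1=-\frac{k(k+d-2)}{2(d-1)}\,t^2\,\frac{\int_0^1 u^2\sin^{d-2}(ut)\rho(Tu)\,du}{\int_0^1 \sin^{d-2}(ut)\rho(Tu)\,du}\bigl(1+O(\vep^2)\bigr).
\]
As $t\to 0$ the ratio converges to $\int_0^1 u^d\rho(Tu)\,du/\int_0^1 u^{d-2}\rho(Tu)\,du$, and both moments are strictly positive by the non-degeneracy hypothesis $\rho\in W(T,t_0,T_0)$ (each is bounded below by $(t_0/T)^d\int_{t_0/T}^{T_0/T}\rho(Tu)\,du>0$). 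Fixing $\vep$ once and for all, $|m^\rho_k(t)-1|\ge c\,k^2 t^2$ on $(0,\vep/k]$ for all $k$ sufficiently large, so $I_k\ge c^2 k^4\int_0^{\vep/k}t^{3-2\alpha}\,dt\gtrsim k^{2\alpha}$. For the finitely many remaining $k\ge 1$, $R_k\not\equiv 1$ combined with the positivity of $\rho$ on a set of positive measure forces $I_k>0$, a contribution that can be absorbed into constants.

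The main obstacle is obtaining the lower bound uniformly in $k$ with the correct power: one needs to separate the leading $-\frac{k(k+d-2)}{2(d-1)}s^2$ contribution of $R_k(s)-1$ from its Taylor remainder on an interval of length $\vep/k$, and the compatibility of this choice with $\int_0^{\vep/k}t^{3-2\alpha}\,dt\asymp k^{2\alpha-4}$ is exactly what forces the restriction $\alpha<2$; when $\alpha\ge 2$ the quadratic correction is no longer the dominant feature of $A^\rho_t f-f$ and the higher-order counterterms appearing in \eqref{eq03b}--\eqref{eq3c} become necessary. This is also the only place where the hypothesis $\rho\in W(T,t_0,T_0)$ enters essentially: it guarantees that both weighted moments $\int_0^1 u^d\rho(Tu)\,du$ and $\int_0^1 u^{d-2}\rho(Tu)\,du$ appearing in the leading coefficient of $m^\rho_k(t)-1$ are strictly positive.
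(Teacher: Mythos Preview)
Your proof is correct and takes essentially the same approach as the paper (which cites this result from \cite{s18}, but whose general machinery specialized to $n=0$---Lemma~\ref{SqRep}, Lemma~\ref{LocUpperboundI}, Lemma~\ref{LocLowerboundI}---amounts to the same argument: reduce to $I_{\alpha,0}^{\rho,T}(l)\sim l^{2\alpha}$, split the upper bound at $t\sim l^{-1}$, and extract the lower bound on $[0,c/l]$ by isolating the leading Taylor coefficient). The only cosmetic difference is that you Taylor-expand $R_k(\theta)=P_{k,d}(\cos\theta)$ in the angle $\theta$ about $0$ via the Gegenbauer ODE, whereas the paper expands $P_{l,d}(s)$ in $s$ about $s=1$ and substitutes $s=\cos(t\theta/T)$; since $1-\cos\theta\sim\theta^2/2$ and your coefficient $k(k+d-2)/2(d-1)$ is exactly $P_{k,d}'(1)/2$, the two computations coincide.
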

The main theorem of this paper is the following result, which corresponds to Theorems \ref{blpth2} for higher order Sobolev spaces $\alpha \geq 2$.
\begin{thm}[Main Theorem]\label{main}
For $\rho\in  W(T, t_0, T_0)$ and $2n< \alpha<2(n+1)$, $n\in\N$, the following are equivalent:
\begin{quote}
\rm
(1) $f\in \Hn{\alpha}{\sphere{d-1}}$. \\
(2) $f\in\Le{2}{\sphere{d-1}}$ and there exist \(g_1,g_2,...,g_n \in \Le{2}{\sphere{d-1}}\) such that $S_\alpha^{\rho,T}(f,g_1,g_2,...,g_n)\in\Le{2}{\sphere{d-1}}$.
\end{quote}
The conclusion also holds for $\alpha=2n$, $n=1,2,\ldots$ if $\rho$ satisfies \eqref{r-cond-fine}.
\end{thm}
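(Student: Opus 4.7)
The plan is to reduce Theorem \ref{main} to a multiplier analysis via the spherical harmonic decomposition and then adapt the strategy of Barcel\'o et al.\ \cite{blp20}, accommodating the two new ingredients: the weight $\rho$, which deforms the Funk--Hecke multipliers, and the truncation to $[0,T]$, which replaces a global integration by a local one. First I would expand $f=\sum_{\ell\geq 0}Y_\ell$ into its spherical harmonic components of degree $\ell$ and invoke the Funk--Hecke theorem to obtain $A_t^\rho Y_\ell = m_\ell^\rho(t)\,Y_\ell$, where $m_\ell^\rho(t)$ is an explicit integral of the Gegenbauer/Legendre polynomial $P_\ell^{(d)}(\cos(t\theta/T))$ against $\rho(\theta)\sin^{d-2}(t\theta/T)$ on $[0,T]$, normalized by $z_t$. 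Using the Taylor expansion of $P_\ell^{(d)}(\cos s)$ in even powers of $s$, whose coefficients are polynomials in the Laplace--Beltrami eigenvalue $\mu_\ell:=\ell(\ell+d-2)$, one derives an identity $m_\ell^\rho(t)-1 = \sum_{k=1}^n \phi_k^\rho(t)\, q_k(\mu_\ell) + R_{\ell,n}^\rho(t)$, where $\phi_k^\rho(t):=A_t^\rho(|\xi-\cdot|^{2k})(\xi)$ depends on $t$ alone, each $q_k$ is a polynomial of degree exactly $k$, and $R_{\ell,n}^\rho(t)$ is a remainder of effective order $t^{2(n+1)}\mu_\ell^{n+1}$ in the regime $\sqrt{\mu_\ell}\,t\leq 1$ and uniformly controlled otherwise.

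For the direction (1)$\Rightarrow$(2) I would define $g_1,\ldots,g_n$ by inverting the triangular polynomial system so that $\sum_{k=1}^n g_k(\xi)\phi_k^\rho(t)$ exactly cancels the first $n$ terms of the expansion; each $g_k$ comes out as a linear combination of $(-\lap_{\sphere{d-1}})^j f$ for $1\leq j\leq k$, and therefore lies in $L^2(\sphere{d-1})$ when $f\in\Hn{\alpha}{\sphere{d-1}}$ with $\alpha>2n$. Plancherel on $\sphere{d-1}$ together with a direct $t$-integral estimate of the remainder then gives $\|S_\alpha^{\rho,T}(f,g_1,\ldots,g_n)\|_{L^2(\sphere{d-1})}^2\lesssim\sum_\ell \mu_\ell^\alpha \|Y_\ell\|_{L^2(\sphere{d-1})}^2 \sim \|f\|_{H^\alpha(\sphere{d-1})}^2$, with no loss from replacing $\int_0^\pi$ by $\int_0^T$ since the integrand is nonnegative.

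The direction (2)$\Rightarrow$(1) carries the main obstacle. Expanding $f$ and each $g_k$ in spherical harmonics and applying Parseval mode by mode, the finiteness of the square function reads, for every $\ell$, $\int_0^T \bigl|(m_\ell^\rho(t)-1)\hat f_\ell-\sum_{k=1}^n \phi_k^\rho(t)\hat g_{k,\ell}\bigr|^2\,t^{-2\alpha-1}\,dt<\infty$, from which one must extract $\mu_\ell^\alpha|\hat f_\ell|^2$. A low-$\ell$ argument first identifies the $\hat g_{k,\ell}$ with the unique choice dictated by the polynomial matching above, up to harmonics of degree at most $n$ that are absorbed into an $L^2(\sphere{d-1})$ correction. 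The crucial analytic step is the sharp multiplier lower bound $\int_0^T \bigl|m_\ell^\rho(t)-1 - \sum_{k=1}^n\phi_k^\rho(t)q_k(\mu_\ell)\bigr|^2\,t^{-2\alpha-1}\,dt\gtrsim\mu_\ell^\alpha$ for $\ell$ large. Because the truncation to $[0,T]$ prevents the use of the global cancellation identities available in \cite{blp20}, I would rescale $s=t\sqrt{\mu_\ell}$, combine the oscillatory asymptotics of $P_\ell^{(d)}$ in the regime $\ell t\gtrsim 1$ with the non-degeneracy assumption $\int_{t_0}^{T_0}\rho>0$ to get a quantitative lower bound on a fixed-length subinterval in $s$, and then undo the scaling to recover the required $\mu_\ell^\alpha$ estimate.

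Finally, the critical case $\alpha=2n$ follows along the same lines but with the Taylor expansion carried one order further; the borderline top-order contribution is absorbed into the symmetric combination $A_t^\rho(g_n)(\xi)\phi_n^\rho(t)$, and the extra hypothesis \eqref{r-cond-fine} on $\rho$ is what guarantees that the corresponding critical $t$-integral stays bounded uniformly in $\ell$, removing the logarithmic divergence at $t=0$.
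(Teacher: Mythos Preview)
Your overall architecture matches the paper's: Funk--Hecke multipliers $m_{l,t}^\rho$, a Taylor expansion of $P_{l,d}$ to isolate a remainder $M_{n,l,t}^\rho$, the canonical choice of $g_k$ built from powers of $-\Delta$ applied to $f$, and a two-sided bound $\int_0^T|M_{n,l,t}^\rho|^2\, t^{-2\alpha-1}\,dt \sim l^{2\alpha}$. Two technical points differ from the paper without being wrong. For the lower bound the paper does not use oscillatory asymptotics in the regime $\ell t\gtrsim 1$; it stays in the complementary regime $t\lesssim l^{-1}$, where the Taylor remainder has a definite sign (via $P_{l,d}^{(n+1)}(s)\geq (1-\varepsilon)P_{l,d}^{(n+1)}(1)$ for $s$ near $1$), yielding a direct pointwise lower bound $|M_{n,l,t}^\rho|\gtrsim l^{2(n+1)}t^{2(n+1)}$ that integrates to $l^{2\alpha}$. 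For (2)$\Rightarrow$(1) the paper does not work mode by mode but regularizes with the Poisson transform $P_r$ and argues pointwise in $\xi$, using the divergence of $\int_0^T t^{4k-2\alpha-1}\,dt$ to force $P_rg_k=T_k((-\Delta)^k P_rf)$; your Parseval route is equally legitimate here since $A_t^\rho(|\xi-\cdot|^{2k})(\xi)$ is constant in $\xi$.

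The genuine gap is in the critical case $\alpha=2n$. You locate the role of \eqref{r-cond-fine} in an upper bound (``the corresponding critical $t$-integral stays bounded \ldots\ removing the logarithmic divergence at $t=0$''), but in the paper it is exactly the opposite. The upper bound $J_n^{\rho,T}(l)\lesssim l^{4n}$ holds for \emph{every} $\rho\in W(T,t_0,T_0)$: the potential logarithm coming from $\int_{t_l}^{T} t^{-1}\,dt$ is removed not by any hypothesis on $\rho$ but by the sharper pointwise estimate \eqref{Psharp} on $P_{l,d}(\cos\theta)$ in the range $\theta\gtrsim 1/l$, which improves the bound on $|m_{l,t}^\rho|$. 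Condition \eqref{r-cond-fine} is needed only for the \emph{lower} bound $J_n^{\rho,T}(l)\gtrsim l^{4n}$: one writes $N_{n,l,t}^\rho=M_{n,l,t}^\rho - \tfrac{c_{n,l}}{2^n}A_t^\rho(|\xi-\cdot|^{2n})(\xi)\,M_{0,l,t}^\rho$ as a difference of two terms that are \emph{both} of order $l^{2n+2}t^{2n+2}$ on $[0,\,a_{d,1,\varepsilon}(l)]$, and \eqref{r-cond-fine} is precisely the quantitative inequality ensuring that the second term strictly dominates the first, so that the difference does not cancel to leading order. Without this mechanism identified, your treatment of the critical case is incomplete.
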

The condition \eqref{r-cond-fine} is somewhat intricate and will be introduced later. However, we believe that it represents one of the novel contributions of this paper. Examples satisfying \eqref{r-cond-fine} will be provided; see Proposition \ref{rho-example}.

The structure of this paper is as follows. In Section 2, we summarize the setting of our study, including several facts about spherical harmonics and the definition of Sobolev spaces based on spherical harmonics. In Section 3, we present the proof of the main theorem (Theorem \ref{main}).

\section{Overview of Concepts and Notation}\label{sec-setup}
In this paper, we generally follow the notation used in \cite{blp20}. All functions take values in the complex numbers $\C$. The symbol $A \sim B$ denotes both $A \lesssim B$ and $A \gtrsim B$, where $A \lesssim B$ means that there exists a constant $C > 0$, independent of the parameters associated with the quantities $A$ and $B$, such that $A \leq CB$. When we wish to specify the dependence of the constant $C$ on certain parameters, we write $\lesssim_{d,\rho,T}$ and $\sim_{d,\rho,T}$, etc. Since we fix $\rho\in W(T,t_0,T_0)$, we abbreviate $\sim_{\rho,t_0,T_0,T}$ as  $\sim_{\rho, T}$ if it makes no confusion

\begin{lem}[{\cite[Lemma 3.1]{blp20}}, see also {\cite[Theorem 2.8]{ah12}}]\label{prop-L}
Let $\xi \in \mathbb{S}^{d-1}$ and $L \in \mathbb{H}_l^d$ be such that $L(R\eta)=L(\eta)$ for every rotation $R$ in $\mathbb{R}^d$ satisfying $R(\xi)=\xi$. Then 
\[
L(\eta)=L(\xi)P_{l,d}(\eta \cdot \xi),
\]
where $P_{l,d}$ denotes the Legendre polynomial of degree $l$ in $d$ dimensions.
\end{lem}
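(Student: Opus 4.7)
The plan is to exploit the hypothesis that $L$ is invariant under the full stabilizer of $\xi$ in $SO(d)$ to first show that $L(\eta)$ depends on $\eta$ only through the inner product $\eta \cdot \xi$, and then to use the harmonicity of $L$ to pin down the resulting one-variable function as a scalar multiple of $P_{l,d}$.

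Concretely, I would choose coordinates so that $\xi = e_d$, so that the stabilizer of $\xi$ acts as $SO(d-1)$ on $(x_1,\ldots,x_{d-1})$. Invoking the classical invariant-theoretic fact that every $SO(d-1)$-invariant polynomial on $\mathbb{R}^d$ is a polynomial in $x_d$ and $r^2 := x_1^2 + \cdots + x_{d-1}^2$, and using that $L$ extends to a homogeneous harmonic polynomial of degree $l$, I can write
\[
L(x) = \sum_{2j+k=l} a_{j,k}\, x_d^k\, r^{2j}.
\]
Restricting to $\mathbb{S}^{d-1}$ and substituting $r^2 = 1 - x_d^2$, this collapses to $L(\eta) = p(\eta \cdot \xi)$ for a single-variable polynomial $p$ of degree at most $l$. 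For $d=2$ the stabilizer argument degenerates, but then the claim follows by interpreting ``rotations fixing $\xi$'' to include the reflection across the axis through $\xi$, selecting the even trigonometric mode $\cos(l\theta) = P_{l,2}(\cos\theta)$.

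Next, since $L \in \mathbb{H}_l^d$, its restriction to the sphere is an eigenfunction of $\Delta_{\mathbb{S}^{d-1}}$ with eigenvalue $-l(l+d-2)$. Writing $L(\eta) = p(\cos\theta)$, with $\theta$ the geodesic angle from $\xi$, the zonal formula for the spherical Laplacian, namely
\[
\Delta_{\mathbb{S}^{d-1}} p(\cos\theta) = \frac{1}{\sin^{d-2}\theta}\frac{d}{d\theta}\!\left(\sin^{d-2}\theta\,\frac{d\, p(\cos\theta)}{d\theta}\right),
\]
converts the eigenvalue equation into the Gegenbauer/Legendre ODE in dimension $d$,
\[
(1-t^2)\, p''(t) - (d-1)\, t\, p'(t) + l(l+d-2)\, p(t) = 0, \qquad t = \cos\theta.
\]
The polynomial solutions of this ODE of degree $\le l$ form a one-dimensional space, spanned by the Legendre polynomial $P_{l,d}$ (with the standing normalization $P_{l,d}(1) = 1$). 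Hence $p = c\, P_{l,d}$ for some constant $c$, and evaluating at $\eta = \xi$ gives $c = L(\xi)$, which is the claimed identity.

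The main technical point is the uniqueness in the previous paragraph: that the polynomial solution space of the Legendre ODE of degree $\leq l$ is one-dimensional. A clean route is Frobenius analysis at the regular singular points $t = \pm 1$, where the indicial roots force any second linearly independent solution to be logarithmically singular, so at most a one-dimensional polynomial solution space exists, and the explicit construction of $P_{l,d}$ realizes it. Alternatively, one may invoke the reproducing-kernel property of the zonal harmonic $Z_\xi^l \in \mathbb{H}_l^d$, which directly implies that the subspace of $L \in \mathbb{H}_l^d$ invariant under the stabilizer of $\xi$ is one-dimensional; I would adopt whichever formulation is most consistent with the exposition in \cite{blp20, ah12} that this lemma is borrowed from.
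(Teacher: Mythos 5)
The paper does not prove this lemma; it is quoted from Barcel\'o et al.\ \cite{blp20} and Atkinson--Han \cite{ah12}, so there is no internal proof to compare against. Your argument is correct. Of the two routes you sketch, the second (reproducing-kernel dimension count showing the stabilizer-invariant subspace of $\mathbb{H}_l^d$ is one-dimensional, then exhibiting the zonal harmonic $P_{l,d}(\xi\cdot\,\cdot\,)$ as a nonzero member) is the one Atkinson--Han \cite{ah12} actually use and is lighter than the invariant-theory plus ODE route; your first route is fine and more self-contained, at the cost of invoking classical invariant theory and a Frobenius analysis of the Gegenbauer ODE.

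One point worth making explicit: reading ``rotation'' as $R\in SO(d)$, the stabilizer of $\xi$ is a copy of $SO(d-1)$, which is trivial when $d=2$, so the hypothesis is vacuous and the statement as literally written fails there (take $\xi=(1,0)$ and $L(\eta)=\sin(l\theta)\in\mathbb{H}_l^2$: it satisfies the empty hypothesis but is not $cP_{l,2}(\eta\cdot\xi)=c\cos(l\theta)$ for any $c$). The references in fact phrase the hypothesis using the orthogonal stabilizer $O(d-1)$, i.e.\ including the reflection fixing the axis through $\xi$, which is precisely your $d=2$ fix; under that reading, the invariant-theory step (invariants of $O(d-1)$ acting on the orthogonal complement of $\xi$ are polynomials in $r^2$) is uniform in $d\geq 2$. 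For $d\geq 3$ the $SO(d-1)$- and $O(d-1)$-invariants of a single vector coincide, so your argument stands as written in that range, and the Frobenius uniqueness step and the evaluation at $\eta=\xi$ using $P_{l,d}(1)=1$ are both correct.
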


The explicit formula for $P_{l,d}$ is given in \cite[(2.19)]{ah12}:
\begin{equation}
\label{eq19b}
P_{l,d}(s)=l!\Gamma\left(\frac{d-1}{2}\right)\sum_{k=0}^{[l/2]}(-1)^k\frac{(1-s^2)^ks^{l-2k}}{4^kk!(l-2k)!\Gamma(k+\frac{d-1}{2})}.
\end{equation}
It is known that
\begin{equation}\label{P(1)}
P_{l,d}(1)=1\quad\text{for } l=0,1,\dots.
\end{equation}
For later use, we recall the following estimates:
\begin{equation}\label{eq19b2}
P_{l,d}^{(k)}(1)=\frac{l!(l+k+d-3)!\Gamma\left(\frac{d-1}{2}\right)}{2^k(l-k)!(l+d-3)!\Gamma\left( k+\frac{d-1}{2} \right)} 
\sim l^k(l+d-2)^k.
\end{equation}
For every \(t \in [-1,1]\) and $k\in\mathbb{N}\cup\{0\}$, we have
\begin{equation}\label{eq20}
|P_{l,d}^{(k)}(t)|\leq P_{l,d}^{(k)}(1) \sim l^k(l+d-2)^k\sim l^{2k},
\end{equation}
see \cite[(18)(44)]{blp20} and \cite[pp.~58--59]{ah12}. 

We also recall the following estimate from \cite[p.~18]{blp20}, which will be used later:
\begin{equation}\label{Psharp}
\left|P_{l,d}\left(\cos\theta\right)\right|\sin^{d-2}\left(\theta\right) \lesssim_d \theta^{\frac{d-2}{2}} l^{\frac{2-d}{2}} \quad\text{for } \frac{1}{l} \lesssim \theta < \frac{\pi}{4}.
\end{equation}

The addition theorem \cite[Theorem 2.9]{ah12} states:
\[
\sum_{j=1}^{\nu(l)}Y_l^j(\eta)\overline{Y_l^j(\tau)}=\frac{\nu(l)}{|\mathbb{S}^{d-1}|}P_{l,d}(\eta\cdot\tau).
\]
As a consequence, the following reproducing formula holds for $Y_l\in \mathbb{H}_l^d$ \cite[p.~23]{ah12}:
\begin{equation}
\label{repro}
Y_l(\xi)=\frac{\nu(l)}{|\mathbb{S}^{d-1}|}\int_{\mathbb{S}^{d-1}}P_{l,d}(\xi\cdot\eta)Y_l(\eta)d\sigma(\eta).
\end{equation}

Each spherical harmonic $Y_l^j$ is an eigenfunction of the Laplace--Beltrami operator $-\Delta$ satisfying
\begin{equation}
\label{eigen}
-\Delta Y_l^j=l(l+d-2)Y_l^j.
\end{equation}
Therefore, for $f\in C^\infty(\mathbb{S}^{d-1})$ with expansion $f=\sum_{l=0}^\infty\sum_{j=1}^{\nu(l)}\hat{f}_{lj}Y_l^j$, where $\hat{f}_{lj}:=\int_{\mathbb{S}^{d-1}}f\overline{Y_j^l}d\sigma$, we have
\begin{equation}\label{sumeigen}
\|\nabla f\|^2_{L^2(\mathbb{S}^{d-1})}=\sum_{l=0}^\infty l(l+d-2)\sum_{j=1}^{\nu(l)}|\hat{f}_{lj}|^2.
\end{equation}
Moreover,
\[
\sum_{j=1}^{\nu(l)}|\hat{f}_{lj}|^2=\left\|\sum_{j=1}^{\nu(l)}\hat{f}_{lj}Y_l^j\right\|^2_{L^2(\mathbb{S}^{d-1})}=O(l^{d-2k-2})
\]
if $f\in C^k(\mathbb{S}^{d-1})$ \cite[(3.26)]{ah12}. 

Using \eqref{eigen}, we define
\begin{equation}\label{eqn1214}
(-\Delta)^{\frac{\alpha}{2}}f:=\sum_{l=0}^\infty \left\{l(l+d-2)\right\}^{\frac{\alpha}{2}}\sum_{j=1}^{\nu(l)}\hat{f}_{lj}Y_l^j \quad\text{in } L^2(\mathbb{S}^{d-1}),
\end{equation}
and the corresponding norm:
\begin{equation}
\label{sobolev-norm}
\left\|(-\Delta)^{\frac{\alpha}{2}} f\right\|_{L^2(\mathbb{S}^{d-1})}^2=\sum_{l=0}^\infty \left\{l(l+d-2)\right\}^\alpha\sum_{j=1}^{\nu(l)}|\hat{f}_{lj}|^2.
\end{equation}
In particular, $\left\|(-\Delta)^{\frac{1}{2}} f\right\|_{L^2(\mathbb{S}^{d-1})}=\|\nabla f\|_{L^2(\mathbb{S}^{d-1})}$. Following \cite{blp20}, we define $H^\alpha(\mathbb{S}^{d-1})$ as the completion of $C^\infty(\mathbb{S}^{d-1})$ with respect to the norm
\begin{align*}
\|f\|^2_{H^\alpha(\mathbb{S}^{d-1})}&:=\sum_{l=0}^\infty\left(1+l^{1/2}(l+d-2)^{1/2}\right)^{2\alpha}\sum_{j=1}^{\nu(l)}|\hat{f}_{lj}|^2\\
&=\left\|\left(I+(-\Delta)^{1/2}\right)^\alpha f\right\|^2_{L^2(\mathbb{S}^{d-1})},
\end{align*}
see also \cite[Definition 3.23]{ah12}.

\section{Proof of Main Theorem (Theorem \ref{main})}\label{sec-proof}
\subsection{Preliminaries}
The purpose of this section is to prove the following fact:
\begin{prop}\label{equinorm}
Suppsoe that  \(\alpha > 0\) and the non-negative integer  \(n\) satisfy \(2n < \alpha < 2(n+1)\) and \(f \in H^{2n}(\sphere{d-1})\). Then, if \(n = 0\), it holds that
\[
\|(-\lap)^{\frac{\alpha}{2}}f\|_{\Le{2}{\sphere{d-1}}} \sim_{d,\rho, T} \|S_{\alpha}^{\rho,T}(f)\|_{L^{2}(\sphere{d-1})}.
\] 
 If \(n \geq 1\),  it holds that 
\[
\|(-\lap)^{\frac{\alpha}{2}}f\|_{\Le{2}{\sphere{d-1}}} \sim_{d,n, \rho, T} \left\|S_{\alpha}^{\rho,T}\big(f, T_1((-\Delta)f), \ldots, T_n((-\Delta)^n f)\big)\right\|_{L^{2}(\sphere{d-1})}.
\]
The conclusions also holds when $\alpha=2n$ if $\rho$ satisfy \eqref{r-cond-fine}.
\end{prop}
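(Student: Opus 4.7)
My plan is to diagonalize every operator in sight using the spherical-harmonic decomposition, thereby reducing the stated norm equivalence to a per-frequency estimate that can be verified by explicit Taylor analysis. The key first observation is that for each $Y_l^j\in\mathbb{H}_l^d$, the reproducing identity \eqref{repro} combined with Lemma \ref{prop-L} and Definition \ref{def-ga} shows $A^\rho_t$ to be a Fourier multiplier,
\begin{equation*}
A^\rho_t Y_l^j(\xi)=\mu_l(t)\,Y_l^j(\xi),\quad \mu_l(t)=\frac{\int_0^T P_{l,d}(\cos(t\phi/T))\,\rho(\phi)\,\sin^{d-2}(t\phi/T)\,d\phi}{\int_0^T\rho(\phi)\,\sin^{d-2}(t\phi/T)\,d\phi},
\end{equation*}
after the change of variable $\theta=t\phi/T$ in \eqref{ga}. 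Because $|\xi-\eta|^{2k}=2^k(1-\xi\cdot\eta)^k$ is zonal about $\xi$, the quantity $a_k(t):=A^\rho_t(|\xi-\cdot|^{2k})(\xi)$ is independent of $\xi$; expanding $(1-\xi\cdot\eta)^k$ in Legendre polynomials writes $a_k(t)$ as a finite linear combination of $\mu_0(t),\ldots,\mu_k(t)$, and a direct computation gives $a_k(t)\sim_{\rho,T}t^{2k}$ as $t\to 0$.

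Writing $f=\sum_{l,j}\hat f_{lj}Y_l^j$ and each $g_k=\sum_{l,j}\hat g_{k,lj}Y_l^j$, Plancherel converts the $L^2$ norm of the generalized square function into
\begin{equation*}
\|S^{\rho,T}_\alpha(f,g_1,\ldots,g_n)\|^2_{L^2(\sphere{d-1})}=\sum_{l,j}\int_0^T\left|(\mu_l(t)-1)\hat f_{lj}-\sum_{k=1}^n m_{k,l}(t)\hat g_{k,lj}\right|^2\frac{dt}{t^{2\alpha+1}},
\end{equation*}
where $m_{k,l}(t)=a_k(t)$ either when $k<n$ or when $2n<\alpha<2(n+1)$, while $m_{n,l}(t)=\mu_l(t)a_n(t)$ in the boundary case $\alpha=2n$, reflecting \eqref{eq3c}. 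Combined with the spectral description $\|(-\Delta)^{\alpha/2}f\|_{L^2}^2=\sum\{l(l+d-2)\}^\alpha|\hat f_{lj}|^2$ from \eqref{sobolev-norm}, the proposition reduces to finding multipliers $T_k^{(l)}$ such that, with the choice $\hat g_{k,lj}=T_k^{(l)}\{l(l+d-2)\}^k\hat f_{lj}$, the per-frequency remainder $R_l(t):=(\mu_l(t)-1)-\sum_{k=1}^n m_{k,l}(t)T_k^{(l)}\{l(l+d-2)\}^k$ satisfies $\int_0^T|R_l(t)|^2\,dt/t^{2\alpha+1}\sim_{d,n,\rho,T}\{l(l+d-2)\}^\alpha$.

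The choice of $T_k^{(l)}$ is dictated by the Taylor expansion
\begin{equation*}
\mu_l(t)-1=\sum_{k=1}^\infty Q_k(l(l+d-2))\,b_k(t),\qquad b_k(t)\sim_{\rho,T}t^{2k},
\end{equation*}
obtained by substituting \eqref{eq19b} into the formula for $\mu_l(t)$; here $Q_k$ is a polynomial of degree exactly $k$ in $l(l+d-2)$ with nonzero leading coefficient, computable via \eqref{eq19b2}. Because the change-of-basis matrix from $(a_1,\ldots,a_n)$ to $(b_1,\ldots,b_n)$ is triangular with nonzero diagonal entries, in the strict case $2n<\alpha<2(n+1)$ one can uniquely solve for multipliers $T_k^{(l)}$ that annihilate the entire $b_1,\ldots,b_n$ contribution to $R_l(t)$; these determine the operators $T_1,\ldots,T_n$ acting on $(-\Delta)^k f$ via \eqref{eqn1214}. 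The remainder then begins at order $t^{2n+2}$ with leading coefficient $\sim\{l(l+d-2)\}^{n+1}$, and the two-sided estimate follows by splitting $[0,T]$ at $t\sim 1/l$, using the Taylor remainder on the small-$t$ side and the oscillation bound \eqref{Psharp} on the large-$t$ side; the lower bound relies on the positivity $\int_{t_0}^{T_0}\rho>0$ from Definition \ref{def-weight} to prevent accidental cancellation on the scale $lt\sim 1$.

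The main obstacle lies in the boundary case $\alpha=2n$, where $m_{n,l}(t)=\mu_l(t)a_n(t)$ rather than $a_n(t)$. Because $\mu_l(0)=1$, the $t^{2n}$ coefficient is unchanged and the triangular cancellation still determines $\hat g_n$ with leading multiplier of order $\{l(l+d-2)\}^n$; however, the extra factor $\mu_l(t)$ introduces a new $t^{2n+2}$ contribution of the same order $\{l(l+d-2)\}^{n+1}$ as that already coming from the direct expansion of $\mu_l(t)-1$. The sharp lower bound $\int|R_l|^2\,dt/t^{4n+1}\gtrsim l^{4n}$ collapses precisely when these two contributions cancel identically, and the auxiliary hypothesis \eqref{r-cond-fine} is the algebraic identity on $\rho$ designed to exclude this pathological cancellation. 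Verifying that \eqref{r-cond-fine} simultaneously delivers the upper and the lower bounds — and connecting it to the admissible weights produced in Proposition \ref{rho-example} — will be the technical heart of the $\alpha=2n$ case.
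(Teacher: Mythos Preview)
Your proposal is correct and follows essentially the same route as the paper: diagonalize via spherical harmonics (the paper's Lemmas~\ref{coef-A} and~\ref{SqRep}), reduce Proposition~\ref{equinorm} to the per-frequency equivalences $I^{\rho,T}_{\alpha,n}(l)\sim l^{2\alpha}$ and $J^{\rho,T}_n(l)\sim l^{4n}$, and prove these by splitting the $t$-integral at $t\sim 1/l$, using the Taylor remainder of $P_{l,d}$ on the small-$t$ side and, in the critical case, the decay estimate \eqref{Psharp} on the large-$t$ side. One small correction: condition \eqref{r-cond-fine} is an inequality, not an identity, and it is needed \emph{only} for the lower bound of $J^{\rho,T}_n(l)$ (the paper's Lemma~\ref{LocLowerboundJ}); the upper bound (Lemma~\ref{LocUpperboundJ}) holds for every $\rho\in W(T,t_0,T_0)$ and relies instead on \eqref{Psharp} to sharpen the trivial bound $|m^\rho_{l,t}|\le 1$.
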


In order to prove this, we prepare the several facts in this subsection. We start from recalling the following fact
\begin{lem}
\label{rho-bound}
For every $\rho\in  W(T, t_0, T_0)$, it holds that
\begin{equation}
\label{z-behavior}
z_t\sim_{d,\rho,T}t^{d-1}
\end{equation}
uniformly for every $t\in[0,T]$. 
\end{lem}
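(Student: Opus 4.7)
The plan is to derive the two-sided bound $z_t \lesssim t^{d-1} \lesssim z_t$ directly from the explicit formula \eqref{sigma-t} by comparing $\sin\!\left(\frac{t\theta}{T}\right)$ with $\frac{t\theta}{T}$ on the appropriate range. For the upper bound, I would invoke the elementary inequality $\sin s \leq s$ for $s \geq 0$. Since $d \geq 2$, raising to the $(d-2)$th power preserves the inequality, yielding
$\sin^{d-2}\!\left(\frac{t\theta}{T}\right) \leq \left(\frac{t}{T}\right)^{d-2}\theta^{d-2}$. Substituting this into \eqref{sigma-t} factors $t^{d-1}$ out of the integral and leaves $\int_0^T \theta^{d-2}\rho(\theta)\,d\theta$, which is finite: by Cauchy--Schwarz and the boundedness of $\theta^{d-2}$ on $[0,T]$, it is controlled by a constant depending only on $d$, $T$, and $\|\rho\|_{\Le{2}{0,T}}$.

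For the lower bound the hypothesis $\rho \in W(T, t_0, T_0)$ is essential, precisely because it allows me to restrict the integration to $[t_0, T_0]$, on which $\rho$ has strictly positive mass \emph{and} the argument $\frac{t\theta}{T} \in (0, T_0]$ of the sine stays strictly below $\pi$ (since $T_0 < \pi$ and $t \leq T$). Setting
\[
c_{T_0} := \inf_{s \in (0, T_0]} \frac{\sin s}{s} > 0,
\]
which is positive because $\sin s / s$ is continuous on $(0, T_0]$ with limit $1$ at the origin and value $\sin(T_0)/T_0 > 0$ at the endpoint, I obtain $\sin\!\left(\frac{t\theta}{T}\right) \geq c_{T_0}\,\frac{t\, t_0}{T}$ for every $\theta \in [t_0, T_0]$ and $t \in (0, T]$. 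Substituting into \eqref{sigma-t} and using the positivity of $\int_{t_0}^{T_0}\rho(\theta)\,d\theta$ guaranteed by $\rho \in W(T, t_0, T_0)$ gives the matching lower bound $z_t \gtrsim_{d,\rho,T} t^{d-1}$.

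The argument is essentially routine, with no real obstacle. The only subtle point is the strict inequality $T_0 < \pi$ built into Definition \ref{def-weight}: it is precisely what prevents the sine from vanishing on the restricted integration domain and hence ensures a lower bound that is uniform on all of $(0, T]$, including the critical case $T = \pi$ where the pointwise bound $\sin\!\left(\frac{t\theta}{T}\right)$ could otherwise collapse near $\theta = T$.
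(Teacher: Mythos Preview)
Your argument is correct and is precisely the natural direct proof. The paper itself omits the proof of this lemma, referring instead to \cite[Lemma 3.4]{s18}; your approach is the expected one and matches the estimates the paper uses elsewhere (the upper constant is exactly the $C_z$ appearing in Lemma~\ref{sharpz}, and your $c_{T_0}$ equals $\sin T_0/T_0$, the same bound invoked in the proof of Lemma~\ref{LocLowerboundI}).
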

\begin{lem}[cf. {\cite[Lemma 2.1]{blp20}}] \label{coef-A}
For any $Y_l \in \mathbb{H}_l^d$ it holds that 
$$
A^\rho_tY_l=m^\rho_{l,t}Y_l,
$$
where
\begin{equation}\label{eq05}
m^\rho_{l,t}:= \frac{ |\sphere{d-2}|}{z_t}\cdot\frac{t}{T} \int_{0}^{T} P_{l,d}\left(\cos\left(\frac{t}{T}\theta\right)\right)\sin ^{d-2}\left(\frac{t}{T}\theta\right)  \, \rho(\theta)d\theta.
\end{equation}
Moreover it holds that
\begin{equation}\label{m_rough}
|m^\rho_{n,l}|\leq 1.
\end{equation}
\end{lem}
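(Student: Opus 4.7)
The plan is to exploit the fact that the kernel defining $A^\rho_t$,
\[
K_t(\xi,\tau) := z_t^{-1}\,\rho\Big(\tfrac{T}{t}\,\theta(\xi,\tau)\Big)\,\chi_{C(\xi,t)}(\tau),
\]
depends on $(\xi,\tau)$ only through the angle $\theta(\xi,\tau) = \arccos(\xi\cdot\tau)$, so that $A^\rho_t$ commutes with rotations of $\R^d$. This is the classical Funk--Hecke setting, and the computation will combine Lemma \ref{prop-L} with the reproducing formula \eqref{repro}.

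I would argue as follows. Fix $\xi\in\sphere{d-1}$ and consider the zonal harmonic $Y^\xi_l(\eta):=P_{l,d}(\xi\cdot\eta)\in\mathbb{H}_l^d$. By rotation invariance of the kernel $K_t$, for any rotation $R$ fixing $\xi$ one has $A^\rho_t Y^\xi_l(R\eta) = A^\rho_t Y^\xi_l(\eta)$, so $A^\rho_t Y^\xi_l$ is $R$-invariant. Granting, as a consequence of rotation covariance together with \eqref{repro}, that $A^\rho_t Y^\xi_l\in\mathbb{H}_l^d$, Lemma \ref{prop-L} forces
\[
A^\rho_t Y^\xi_l(\eta) = A^\rho_t Y^\xi_l(\xi)\,P_{l,d}(\eta\cdot\xi) = m^\rho_{l,t}\,Y^\xi_l(\eta),
\]
where $m^\rho_{l,t}:=A^\rho_t Y^\xi_l(\xi)$; this scalar does not depend on $\xi$, again by rotation invariance. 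Since the zonal harmonics $\{Y^\xi_l\}_{\xi\in\sphere{d-1}}$ span $\mathbb{H}_l^d$ (addition theorem), the identity extends to every $Y_l\in\mathbb{H}_l^d$, giving $A^\rho_t Y_l = m^\rho_{l,t}\,Y_l$.

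To obtain the explicit formula \eqref{eq05}, I would compute $m^\rho_{l,t} = A^\rho_t Y^\xi_l(\xi)$ directly from Definition \ref{def-ga}, using $Y^\xi_l(\xi) = P_{l,d}(1)=1$ by \eqref{P(1)}. Writing $d\sigma=\sin^{d-2}\theta\,d\theta\,d\sigma'$, integrating out $\tau'$ to produce the factor $|\sphere{d-2}|$, and performing the substitution $\theta\mapsto\tfrac{t}{T}\theta$ in the angular variable yields \eqref{eq05}. The bound \eqref{m_rough} is then immediate from $|P_{l,d}(s)|\le P_{l,d}(1)=1$ on $[-1,1]$ (the $k=0$ case of \eqref{eq20}): inserting this inside \eqref{eq05} leaves precisely the integral appearing in \eqref{sigma-t}, which cancels against $z_t^{-1}$.

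I do not anticipate a substantial obstacle. The truncation to $C(\xi,t)$ and the weight factor $\rho(T\theta/t)$ preserve the zonal structure of the kernel, so the reasoning follows that of \cite[Lemma 2.1]{blp20}; the only points that require care are the change-of-variables bookkeeping and the positivity $z_t>0$ on $t\in(0,T]$, which is guaranteed by Lemma \ref{rho-bound} since $\rho\in W(T,t_0,T_0)$.
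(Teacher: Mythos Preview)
Your proposal is correct. The paper itself omits the proof of this lemma, writing only ``We omit the proofs of these lemmas because they are almost same to that of \cite[Lemma 3.4]{s18} and \cite[Lemma 3.1]{s18}''; your Funk--Hecke argument (zonal kernel $\Rightarrow$ rotation covariance $\Rightarrow$ apply Lemma~\ref{prop-L} to the zonal harmonic, then extend via the addition theorem and bound via $|P_{l,d}|\le 1$) is precisely the standard route those references take. The one step you mark with ``granting''---that $A^\rho_t Y^\xi_l\in\mathbb{H}_l^d$---is indeed justified by writing $A^\rho_t Y^\xi_l(\eta)=z_t^{-1}\int K_t(\xi\cdot\tau)\,Y^\tau_l(\eta)\,d\sigma(\tau)$ as an integral of elements of the finite-dimensional (hence closed) space $\mathbb{H}_l^d$, so there is no gap.
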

We omit the proofs of these lemmas because they are almost same to that of \cite[Lemma 3.4]{s18} \aka{and} \cite[Lemma 3.1]{s18}.  We note that since $\{m_{l,t}^\rho\}_{l=0}^\infty\subset\R$ forms a bounded sequence, it holds that
$$
A^\rho_t f=\sum_{l=0}^\infty m^\rho_{l,t}\sum_{j=1}^{\nu(l)}\hat{f}_{lj}Y_l^j\quad\text{in $\Le{2}{\sphere{d-1}}$}.
$$
for every $f=\sum_{l=0}^\infty\sum_{j=1}^{\nu(l)}\hat{f}_{lj}Y_l^j\in\Le{2}{\sphere{d-1}}$.

From direct calculations, we get the following formula: 
\begin{lem}
For $k\in\N\cup\{0\}$, it holds that
\begin{align}
&A^\rho_t\left(\lvert\xi-\cdot\rvert^{2k}\right)(\xi)\nonumber\\
&\qquad=2^k\cdot\frac{|\sphere{d-2}|}{z_t}\cdot\frac{t}{T}\int_0^T \left(1-\cos\left(\frac{t}{T}\theta\right)\right)^k\sin^{d-2}\left(\frac{t}{T}\theta\right)\rho(\theta)\, d\theta.\label{Axi}\nonumber
\end{align}
Moreover it holds that
\begin{equation}
\left|A^\rho_t\left(\lvert\xi-\cdot\rvert^{2k}\right)(\xi)\right|
\sim_{d,k,\rho,T} t^{2k}. \label{rough-A}
\end{equation}
\end{lem}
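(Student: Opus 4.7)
The identity for $A^\rho_t(|\xi-\cdot|^{2k})(\xi)$ follows from a direct unfolding of Definition~\ref{def-ga}. I would parametrize $\sphere{d-1}$ around $\xi$ by the geodesic angle $\theta\in[0,\pi]$ and transverse coordinate $\tau'\in\sphere{d-2}$, so that $d\sigma=\sin^{d-2}\theta\,d\theta\,d\sigma'$ and $|\xi-\tau|^2=2(1-\cos\theta)$. The spherical cap $C(\xi,t)$ then corresponds to $\theta\in[0,t]$, and because the integrand $|\xi-\tau|^{2k}=2^k(1-\cos\theta)^k$ depends only on $\theta$, the $\tau'$-integration yields a factor $|\sphere{d-2}|$. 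A change of variables $\theta\mapsto(t/T)\theta$ then converts the $\theta$-range to $[0,T]$, introduces the Jacobian $t/T$, and turns $\rho((T/t)\theta)$ into $\rho(\theta)$, producing the claimed formula.

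For the asymptotic \eqref{rough-A}, combining this formula with Lemma~\ref{rho-bound} reduces the task to proving
\[
I_k(t):=\int_0^T\bigl(1-\cos(t\theta/T)\bigr)^k\sin^{d-2}(t\theta/T)\,\rho(\theta)\,d\theta\sim_{d,k,\rho,T}t^{2k+d-2}
\]
uniformly for $t\in(0,T]$. The upper bound is immediate from the universal pointwise estimates $1-\cos\phi\leq\phi^2/2$ and $|\sin\phi|\leq|\phi|$: the integrand is majorized by $2^{-k}(t/T)^{2k+d-2}\theta^{2k+d-2}\rho(\theta)$, and the Cauchy--Schwarz inequality together with $\rho\in\Le{2}{0,T}$ produces a finite constant depending only on $d,k,\rho,T$.

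For the matching lower bound, I would restrict the integration to the subinterval $[t_0,T_0]$ supplied by the assumption $\rho\in W(T,t_0,T_0)$. The key point is that $T_0<\pi$, which ensures that the ratios $\sin\phi/\phi$ and $(1-\cos\phi)/\phi^2$ are continuous and strictly positive on $[0,T_0]$; hence there exist constants $c_1,c_2>0$ with $\sin\phi\geq c_1\phi$ and $1-\cos\phi\geq c_2\phi^2$ on $[0,T_0]$. Since $t\theta/T\in[0,T_0]$ whenever $\theta\in[t_0,T_0]$ and $t\leq T$, the integrand on $[t_0,T_0]$ is bounded below by $c_2^k c_1^{d-2}(t/T)^{2k+d-2}\theta^{2k+d-2}\rho(\theta)$. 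Integrating and invoking $\int_{t_0}^{T_0}\rho(\theta)\,d\theta>0$ from Definition~\ref{def-weight} delivers $I_k(t)\gtrsim t^{2k+d-2}$. The only subtlety is the role of the condition $T_0<\pi$, which prevents $\sin(t\theta/T)$ from vanishing on the effective range of integration; this is precisely the structural reason for the definition of the class $W(T,t_0,T_0)$, so no genuine obstacle remains beyond bookkeeping.
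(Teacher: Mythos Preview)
Your proof is correct and follows essentially the same approach as the paper: direct computation for the identity via the angular parametrization and the change of variables $\theta\mapsto(t/T)\theta$, then elementary pointwise bounds on $1-\cos\phi$ and $\sin\phi$ for the two-sided estimate. The only cosmetic differences are that the paper writes the bounds through the identity $1-\cos\phi=\sin^2\phi/(1+\cos\phi)$ rather than the Taylor inequalities you use, and it invokes the definition \eqref{sigma-t} of $z_t$ directly rather than citing Lemma~\ref{rho-bound}; neither changes the substance of the argument.
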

\begin{proof}
\begin{align*}
A^\rho_t&\left(\lvert\xi-\aka{\cdot}\rvert^{2k}\right)(\xi) =z_t^{-1} \int_{C(\xi,t)} \lvert\xi-\tau\rvert^{2k}\rho\left(\frac{T}{t}\theta\right)\,d\sigma(\tau) \\
&=2^k\cdot\frac{|\sphere{d-2}|}{z_t}\int_{0}^{t}(1-\cos\theta)^k\,\sin^{d-2}\theta\rho\left(\frac{T}{t}\theta\right)  \,d\theta\\
&=2^k\cdot\frac{|\sphere{d-2}|}{z_t}\cdot\frac{t}{T}\int_0^T \left(1-\cos\left(\frac{t}{T}\theta\right)\right)^k\sin^{d-2}\left(\frac{t}{T}\theta\right)\rho(\theta)\, d\theta.
\end{align*}
The estimate \eqref{rough-A} is obvious from the definition \eqref{sigma-t} of $z_t$ and the following estimates:
\begin{align*}
A^\rho_t\left(\lvert\xi-\cdot\rvert^{2k}\right)(\xi)&\leq 2^k\cdot\frac{ |\sphere{d-2}|}{z_t}\cdot\frac{t}{T} \int_{0}^{T}\frac{\sin^{2k+d-2}\frac{t}{T}\theta}{\left(1+\cos\left(\frac{t}{T}\theta\right) \right)^{2k}}\rho(\theta) d\theta
\end{align*}
and
\begin{align*}
&A^\rho_t\left(\lvert\xi-\cdot\rvert^{2k}\right)(\xi)
\geq \frac{ |\sphere{d-2}|}{z_t}\cdot\frac{t}{T} \int_{0}^{T}\sin^{2k+d-2}\left(\frac{t}{T}\theta\right)\rho(\theta) d\theta\\
&\geq \frac{ |\sphere{d-2}|}{z_t}\cdot\left(\frac{t}{T} \right)^{2k+d-1}\cdot\left(\frac{\sin T_0}{T_0}\right)^{2k+d-2}\int_{0}^{T}\theta^{2k+d-2}\rho(\theta) d\theta.
\end{align*}
\end{proof}
Here we recall the Taylor formula: 
\begin{equation}\label{PTaylor}
P_{l,d}\left(s\right)= \sum_{k=0}^{n}c_{k,l}\left(1-s\right)^k +(-1)^{n+1}\frac{P_{l,d}^{(n+1)}(\tau(s))}{(n+1)!}\left(1-s\right)^{n+1}
\end{equation}
for some $\tau(s)\in \left(s,1\right)$, where
\begin{equation}\label{eq2025040802}
c_{k,l}=\frac{(-1)^{k}P_{l,d}^{(k)}(1)}{k!}.
\end{equation}
We note the following fact, which is obvious from \eqref{eq19b}:
\begin{prop}\label{c-est}
$|c_{k.l}|\stackrel{<}{\sim}_{d,n,T} l^{2k}$.
\end{prop}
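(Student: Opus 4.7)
The plan is to read the bound off directly from the estimate \eqref{eq19b2} that is already displayed in Section~\ref{sec-setup}. By the definition \eqref{eq2025040802},
$$
|c_{k,l}| = \frac{P_{l,d}^{(k)}(1)}{k!},
$$
and \eqref{eq19b2} asserts $P_{l,d}^{(k)}(1) \sim l^k(l+d-2)^k$ with implicit constants depending only on $d$ and $k$. Since in the Taylor formula \eqref{PTaylor} the index $k$ runs only over $0 \le k \le n$, the factor $1/k!$ does not alter the $l$-dependence, and all the combined constants can be absorbed into a single one depending only on $d$ and $n$. The $T$ in the declared subscript is thus superfluous here but harmless.

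The only point to record is the elementary equivalence $l^k(l+d-2)^k \sim l^{2k}$, which follows from $l \le l+d-2 \le (d-1)l$ for $l \ge 1$ and handles trivially the cases $l=0$ (where $P_{l,d}^{(k)}(1)=0$ for $k \ge 1$, and $c_{0,0}=1$ since $P_{0,d}\equiv 1$). There is no genuine obstacle: the proposition is essentially a bookkeeping remark, which is why the paper flags it as ``obvious from \eqref{eq19b}'' — indeed \eqref{eq19b2} itself is obtained by differentiating \eqref{eq19b}.

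If one preferred a self-contained derivation bypassing \eqref{eq19b2}, one would differentiate \eqref{eq19b} $k$ times and evaluate at $s=1$. Using the factorization $(1-s^2)^j=(1-s)^j(1+s)^j$ together with the Leibniz rule, one sees that only the single summand $j=k$ survives the evaluation at $s=1$ (all other summands retain a positive power of $(1-s)$). Collecting the resulting constants reproduces the closed form of \eqref{eq19b2}, and the $\sim l^{2k}$ growth is then immediate. Either route completes the proof.
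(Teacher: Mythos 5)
Your proof is correct and follows the paper's intended route: the paper dismisses this as ``obvious from \eqref{eq19b},'' and you simply read the bound off \eqref{eq19b2} (which is the evaluation of \eqref{eq19b} differentiated $k$ times at $s=1$), combined with the definition $c_{k,l}=(-1)^k P_{l,d}^{(k)}(1)/k!$. Your observations about absorbing $1/k!$ for $k\le n$, the equivalence $l^k(l+d-2)^k\sim l^{2k}$, and the redundancy of $T$ in the subscript are all accurate bookkeeping and match what the paper leaves implicit.
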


We apply \eqref{PTaylor} with $s=\cos\left(\frac{t}{T}\theta\right)$ and get 
\begin{align}
m_{l,t}^\rho=&\sum_{k=0}^n \frac{c_{k,l}}{2^k}A^\rho_t\left(\lvert\xi-\cdot\rvert^{2k}\right)(\xi)
\\
&+\frac{ |\sphere{d-2}|}{z_t}\cdot\frac{t}{T}\int_0^T(-1)^{n+1}\frac{P_{l,d}^{(n+1)}(\tau(s))}{(n+1)!}\left(1-\cos\left(\frac{t}{T}\theta\right)\right)^{n+1}\\
&\qquad\qquad\times\sin^{d-2}\left(\frac{t}{T}\theta\right)\rho(\theta)d\theta.
\label{mTaylor}
\end{align}

Here we define the operator that connect the Taylor expansion and the spherical harmonics expansion:
\begin{lem}\label{T}
Let
\begin{equation}\label{eq30}
T_k(f) :=\sum_{l=1}^{\infty}\frac{c_{k,l}}{2^k\{l(l+d-2)\}^k}\sum_{j=1}^{v(l)}\hat{f}_{lj}Y_l(\xi), \hspace{1cm} k=1,2,\ldots,n.
\end{equation}
Then \( T_k \) and \( T_k^{-1} \) are continuous in the space \( L^2(\sphere{d-1}) \). 
\end{lem}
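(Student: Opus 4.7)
The plan is to realize $T_k$ as a Fourier multiplier with respect to the spherical harmonic expansion. If $f=\sum_{l,j}\hat f_{lj}Y_l^j$, then $T_kf=\sum_{l\geq 1}\lambda_{k,l}\sum_j\hat f_{lj}Y_l^j$ with symbol
$$
\lambda_{k,l}:=\frac{c_{k,l}}{2^k\{l(l+d-2)\}^k},\qquad l\geq 1.
$$
By Parseval's identity applied to the orthonormal basis of spherical harmonics, the operator norm of $T_k$ on $\Le{2}{\sphere{d-1}}$ is exactly $\sup_{l\geq 1}|\lambda_{k,l}|$, and a continuous inverse exists on the invariant subspace where $\lambda_{k,l}\neq 0$ precisely when $|\lambda_{k,l}|$ is bounded away from zero there. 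Thus the entire claim reduces to a two-sided estimate $|\lambda_{k,l}|\sim_{d,k} 1$ uniformly in $l$ (on the appropriate index set).

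I would derive this estimate directly from the explicit formula \eqref{eq19b2} together with the definition $c_{k,l}=(-1)^kP_{l,d}^{(k)}(1)/k!$ in \eqref{eq2025040802}. The two factorial ratios appearing in \eqref{eq19b2}, namely
$$
\frac{l!}{(l-k)!}=l(l-1)\cdots(l-k+1),\qquad \frac{(l+k+d-3)!}{(l+d-3)!}=(l+d-2)(l+d-1)\cdots(l+k+d-3),
$$
are polynomials of exact degree $k$ in $l$, each with leading coefficient $1$. Dividing their product by $\{l(l+d-2)\}^k$ therefore produces a rational function of $l$ whose limit as $l\to\infty$ is $1$ and which, for every $l\geq k$, is bounded above and below by positive constants depending only on $d$ and $k$. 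This yields the required symbol estimate.

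The only point demanding care is that $P_{l,d}$ has degree $l$, so $P_{l,d}^{(k)}(1)=0$ and hence $c_{k,l}=0$ for $1\leq l<k$; these finitely many modes lie in the kernel of $T_k$. In Lemma \ref{T} the operator $T_k^{-1}$ should be understood on the invariant, closed subspace $\{f\in\Le{2}{\sphere{d-1}}:\hat f_{lj}=0\text{ for }1\leq l<k\}$, which is the setting actually used in Proposition \ref{equinorm}, where $T_k$ is composed with $(-\lap)^k$ to recover the Taylor coefficient in \eqref{mTaylor}. Under this convention the uniform two-sided bound on $\{\lambda_{k,l}\}_{l\geq k}$ immediately delivers continuity of both $T_k$ and $T_k^{-1}$. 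I do not anticipate any real obstacle: the argument is essentially a direct symbol computation from \eqref{eq19b2}, the only minor subtlety being the bookkeeping of the low-frequency kernel just noted.
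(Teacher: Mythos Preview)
Your approach is essentially the same as the paper's: both reduce the lemma to the two-sided symbol estimate $\lambda_{k,l}\sim_{d,k}1$, which the paper simply asserts as a consequence of \eqref{eq19b2} (written there as \eqref{eq19b}) without spelling out the factorial-ratio computation you provide. You are in fact more careful than the paper, which states the estimate for all $l\geq 1$ and overlooks that $c_{k,l}=0$ for $1\leq l<k$; your remark that $T_k^{-1}$ must be interpreted on the closed invariant subspace with these finitely many low modes removed---which is indeed the only way it is used, via the composition $T_k\circ(-\Delta)^k$---is a genuine clarification of a point the paper glosses over.
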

\begin{proof}
We may only note that
$$
\frac{c_{k,l}}{2^k\{l(l+d-2)\}^k}\sim_{d, n} 1
$$
for a fixed $k=1,\ldots,n$, which holds from \eqref{eq19b}. 
\end{proof}
We note that
$$
T_k((-\lap)^kf)=\sum_{l=1}^{\infty} \frac{c_{k,l}}{2^k}\sum_{j=1}^{v(l)}\hat{f}_{lj}Y_l(\xi).
$$
Therefore we get
\begin{align}
&A^\rho_tf(\xi)-\sum_{k=0}^{n}T_k\left((-\Delta)^kf\right)(\xi)	A^\rho_t\left(\lvert\xi-\cdot\rvert^{2k}\right)(\xi) \nonumber\\
&=\sum_{l=1}^\infty\left(m_{l,t}^\rho-\sum_{k=0}^{n}\frac{c_{k,l}}{2^k}A^\rho_t\left(\lvert\xi-\cdot\rvert^{2k}\right)(\xi)\right)\sum_{j=1}^{v(l)}\hat{f}_{lj}Y_l^j(\xi).
\label{Aexpansion}
\end{align}

\begin{lem}[cf. {\cite[ p.7 (24)]{blp20}} ]\label{SqRep}
For $f=\sum_{l=0}^\infty\sum_{j=1}^{\nu(l)}\hat{f}_{lj}Y_l^j\in\Le{2}{\sphere{d-1}}$ and $n \in \N \cup \{0\}$, it holds that
\begin{align}
&\left\|S_{\alpha}^{\rho,T}\big(f, T_1((-\Delta)f), \ldots, T_n((-\Delta)^n f)\big)\right\|_{L^{2}(\sphere{d-1})}\nonumber\\
&\qquad=
\begin{cases} 
\displaystyle \sum_{l=1}^\infty I_{\alpha,n}^{\rho,T}(l)\left(\sum_{j=1}^{\nu(l)} |\hat{f}_{lj}|^2\right), &  2n < \alpha < 2(n+1), \\[10pt]
\displaystyle \sum_{l=1}^\infty J_n^{\rho,T}(l)\left(\sum_{j=1}^{\nu(l)} |\hat{f}_{lj}|^2\right), & \alpha=2n, n\neq0,
\end{cases}
\label{lc_piecewise}
\end{align}
where
\begin{align*}
&I_{\alpha,n}^{\rho,T}(l):=\int_0^T | M^\rho_{n,l,t}|^2 \frac{dt}{t^{2\alpha+1}}, \quad J_n^{\rho,T}(l):=\int_0^T | N^\rho_{n,l,t}|^2 \frac{dt}{t^{4n+1}},
\end{align*}
and
\begin{align*}
M^\rho_{n,l,t}:=&
m_{l,t}^\rho-\sum_{k=0}^{n}\frac{c_{k,l}}{2^k}A^\rho_t\left(\lvert\xi-\cdot\rvert^{2k}\right)(\xi),
\\
N^\rho_{n,l,t}:=&M^\rho_{n-1,l,t}-\frac{c_{n,l}}{2^n}A^\rho_t\left(\lvert\xi-\cdot\rvert^{2n}\right)(\xi)\cdot m^\rho_{n,l}\aka{.}
\end{align*}
\end{lem}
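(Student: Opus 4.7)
The plan is to express the integrand of each generalized square function as a spherical-harmonic series in $\xi$ with $t$-dependent coefficients, and then combine the orthonormality of $\{Y_l^j\}$ (Parseval) with Fubini--Tonelli to rewrite the squared $L^2$-norm as the claimed sum-integral. The substantive analytic content has already been established: Lemma \ref{coef-A} gives the diagonalization $A^\rho_tY_l=m^\rho_{l,t}Y_l$, the operator $T_k$ has been designed so that $T_k((-\Delta)^kf)$ realizes precisely the coefficient $c_{k,l}/2^k$ on each mode, and identity \eqref{Aexpansion} already packages the Taylor expansion of $m^\rho_{l,t}$ into a clean mode-by-mode formula. What remains is algebraic reorganization.

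First I would treat the case $2n<\alpha<2(n+1)$. Substituting $g_k=T_k((-\Delta)^kf)$ for $k=1,\ldots,n$ into \eqref{eq03b} and decomposing $f=\hat{f}_{00}Y_0^0+\sum_{l\geq1}\sum_j\hat{f}_{lj}Y_l^j$, I split the integrand by spherical-harmonic mode. The $l=0$ contributions from $A^\rho_tf$ and $f$ coincide (since $m^\rho_{0,t}=1$), so they cancel, which is why the RHS of \eqref{Aexpansion} starts at $l=1$. For $l\geq1$, Lemma \ref{coef-A} gives the mode-$(l,j)$ coefficient
\[
m^\rho_{l,t}-1-\sum_{k=1}^{n}\frac{c_{k,l}}{2^k}A^\rho_t(|\xi-\cdot|^{2k})(\xi),
\]
which, using $c_{0,l}=P_{l,d}(1)=1$ and $A^\rho_t(|\xi-\cdot|^0)(\xi)=1$, equals $M^\rho_{n,l,t}$. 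Hence the integrand of $S^{\rho,T}_\alpha\bigl(f,T_1((-\Delta)f),\ldots,T_n((-\Delta)^nf)\bigr)(\xi)^2$ reads $\sum_{l\geq1}M^\rho_{n,l,t}\sum_j\hat{f}_{lj}Y_l^j(\xi)$.

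The case $\alpha=2n$ is only a minor variant: in \eqref{eq3c} the $k=n$ term carries $A^\rho_t(g_n)$ instead of $g_n$. Applying $A^\rho_t$ to $T_n((-\Delta)^nf)$ introduces, via Lemma \ref{coef-A}, an extra $m^\rho_{l,t}$ factor on the mode-$(l,j)$ contribution coming from $g_n$. The identical mode-by-mode reasoning then produces integrand $\sum_{l\geq1}N^\rho_{n,l,t}\sum_j\hat{f}_{lj}Y_l^j(\xi)$ with $N^\rho_{n,l,t}$ exactly as in the statement.

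To conclude, for each fixed $t\in(0,T]$ the bound $|m^\rho_{l,t}|\leq1$ from \eqref{m_rough} and \eqref{rough-A} ensure that the integrand lies in $L^2(\sphere{d-1})$, so Parseval applies and collapses $\int_{\sphere{d-1}}|\cdot|^2\,d\sigma$ to $\sum_{l\geq1}|M^\rho_{n,l,t}|^2\sum_j|\hat{f}_{lj}|^2$ (resp.\ with $N^\rho_{n,l,t}$). Since the resulting integrand in $t$ is non-negative, Tonelli justifies pulling $\int_0^T\!dt/t^{2\alpha+1}$ inside $\sum_l$, producing the coefficients $I_{\alpha,n}^{\rho,T}(l)$ and $J_n^{\rho,T}(l)$ and yielding \eqref{lc_piecewise}. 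I anticipate no real obstacle: once the algebraic identification of the integrand with the $M^\rho_{n,l,t}$ (resp.\ $N^\rho_{n,l,t}$) series is in hand, the rest is a routine application of Parseval and Fubini--Tonelli.
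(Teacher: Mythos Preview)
Your proposal is correct and follows essentially the same route as the paper: identify the integrand mode-by-mode via \eqref{Aexpansion} (respectively the analogous computation with the extra $m^\rho_{l,t}$ factor on the $k=n$ term when $\alpha=2n$), then apply orthogonality of $\{Y_l^j\}$ in $L^2(\sphere{d-1})$ together with Fubini--Tonelli. Your write-up is in fact slightly more explicit than the paper's, which simply cites \eqref{Aexpansion}, invokes Fubini and orthogonality, and remarks that the case $\alpha=2n$ ``proceeds similarly.''
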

\begin{proof}
For case $2n<\alpha<2(n+1), n=0,1,2,\ldots$, we use
\eqref{Aexpansion} and get
\begin{align}
&A^\rho_tf(\xi)-\sum_{k=0}^{n}T_k\left((-\Delta)^kf\right)(\xi)	A^\rho_t\left(\lvert\xi-\cdot\rvert^{2k}\right)(\xi) \nonumber\\
&=\sum_{l=1}^\infty M_{n,l,t}^\rho\sum_{j=1}^{v(l)}\hat{f}_{lj}Y_l^j(\xi) \label{eq31}.
\end{align}
Then the conclusion can be obtained by Fubini's theorem and the orthogonality of $\{Y_l^j\}$ in $\Le{2}{\sphere{d-1}}$.

The cases \(\alpha=2n, n\neq0\) proceeds similarly.
\end{proof}

We note that
$$
M^\rho_{0,l,t}=m^\rho_{l,t}-1.
$$
From the definition of $M^{\rho}_{n,l,t}$ and  \eqref{mTaylor}, we get
\begin{align}
M_{n,l,t}^\rho=&\frac{ |\sphere{d-2}|}{z_t}\cdot\frac{t}{T}\int_{0}^{T} \frac{(-1)^{n+1}P_{l,d}^{(n+1)}(\tilde\tau(\theta))}{(n+1)!}\\
&\qquad\times\left(1-\cos\left(\frac{t}{T}\theta\right)\right)^{n+1}\sin ^{d-2}\left(\frac{t}{T}\theta\right)  \, \rho(\theta)d\theta.\label{MTaylor}
\end{align}
On the other hand, it holds that 
\begin{equation}\label{N-decomp}
N^\rho_{n,l,t}=M^\rho_{n,l,t}-\frac{c_{n,l}}{2^n}A^\rho_t\left(\lvert\xi-\cdot\rvert^{2n}\right)(\xi)M^\rho_{0,l,t}
\end{equation}
and here we will also use the following deformation:
\begin{align}
M^{\rho}_{n,l,t}=&M_{n-1,l,t}^\rho-\frac{c_{n,l}}{2^n}A^\rho_t\left(\lvert\xi-\cdot\rvert^{2n}\right)(\xi)\nonumber\\
=&\frac{ |\sphere{d-2}|}{z_t}\cdot\frac{t}{T}\cdot\frac{(-1)^n}{n!}\int_{0}^{T}\left(P_{l,d}^{(n)}(\tilde\tau(\theta))-P_{l,d}^{(n)}(1)\right)\nonumber\\
&\qquad\times\left(1-\cos\left(\frac{t}{T}\theta\right)\right)^{n}\sin ^{d-2}\left(\frac{t}{T}\theta\right)  \, \rho(\theta)d\theta\nonumber\\
=&\frac{ |\sphere{d-2}|}{z_t}\cdot\frac{t}{T}\cdot\frac{(-1)^n}{n!}\int_{0}^{T}P_{l,d}^{(n+1)}(\tilde\tau_2(\theta))\nonumber\\
&\qquad\times\left(1-\cos\left(\frac{t}{T}\theta\right)\right)^{n+1}\sin ^{d-2}\left(\frac{t}{T}\theta\right)  \, \rho(\theta)d\theta
\label{NTaylor3}
\end{align}
for $\tilde\tau_2(\theta)\in (\tilde\tau(\theta),1)$.

Proposition \ref{equinorm} follows from \eqref{sobolev-norm} and the following facts:
\begin{equation}\label{lc04}
I_{\alpha,n}^{\rho,T} (l):=\int_0^T |M^\rho_{n,l,t}|^2 \frac{dt}{t^{2\alpha+1}} \sim_{d,n,\rho,T} l^{2\alpha} \sim_{d,n,\rho,T} \left\{l(l+d-2)\right\}^\alpha,
\end{equation}
and
\begin{equation}\label{lc04b}
J_n^{\rho,T} (l):=\int_0^T |N^\rho_{n,l,t}|^2 \frac{dt}{t^{4n+1}} \sim_{d,n,\rho,T} l^{4n} \sim_{d,n,\rho,T} \left\{l(l+d-2)\right\}^{2n}.
\end{equation}

The proof of these facts are divided into following Lemma  \ref{LocUpperboundI}, Lemma \ref{LocUpperboundJ}, Lemma \ref{LocLowerboundI}, and Lemma \ref{LocLowerboundJ} which give the upper bound and the lower bound for \(I_{\alpha,n}^{\rho,T}\) and \(J_n^{\rho,T}\) .  We note that the estimate on \(J_n^{\rho,T}\) ($\alpha=2n$) requires sharper argument than that of  \(I_{\alpha,n}^{\rho,T}\)($\alpha\not= 2n$) and require the condition \eqref{r-cond-fine}.

\subsection{Upper bound of \(I_{\alpha,n}^{\rho,T}\)}
\begin{lem}\label{LocUpperboundI}
For $l\in\N$ and $2n < \alpha < 2(n+1), n=0,1,2,...$, it holds that
$$
I_{\alpha,n}^{\rho,T}(l)\stackrel{<}{\sim}_{d,n,\rho,T}l^{2\alpha}.  
$$ 
\end{lem}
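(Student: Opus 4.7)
The plan is a dyadic split of the $t$-integration at the threshold $t\sim 1/l$, using two complementary pointwise bounds on $M^\rho_{n,l,t}$ that are each sharp in one regime and match at the threshold. The small-$t$ bound will come from the Taylor-remainder representation \eqref{MTaylor}, and the large-$t$ bound from the defining expansion of $M^\rho_{n,l,t}$ together with the rough estimates already recalled. The hypothesis $2n<\alpha<2(n+1)$ is precisely what guarantees integrability on both sides of the split and yields the correct power of $l$ on each piece.

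\textbf{Small-$t$ estimate.} For $t$ with $lt\lesssim 1$, I start from \eqref{MTaylor}, apply the pointwise bound $|P_{l,d}^{(n+1)}(\tilde\tau(\theta))|\lesssim l^{2(n+1)}$ from \eqref{eq20}, and use the elementary estimates $1-\cos s\lesssim s^2$ and $\sin^{d-2}s\lesssim s^{d-2}$, combined with $z_t\sim t^{d-1}$ from \eqref{z-behavior}. Pulling the powers of $t/T$ out of the $\theta$-integral and invoking the finiteness of $\int_0^T\theta^{2(n+1)+d-2}\rho(\theta)\,d\theta$ (immediate from $\rho\in L^2(0,T)$) yields $|M^\rho_{n,l,t}|\lesssim_{d,n,\rho,T}(lt)^{2(n+1)}$. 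Squaring and integrating over $[0,1/l]$ produces $l^{4(n+1)}\int_0^{1/l}t^{4(n+1)-2\alpha-1}\,dt\sim l^{2\alpha}$, where convergence at $0$ requires exactly $\alpha<2(n+1)$.

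\textbf{Large-$t$ estimate.} For $t$ with $lt\gtrsim 1$, I instead return to the definition of $M^\rho_{n,l,t}$ in Lemma \ref{SqRep} and bound each summand directly: $|m^\rho_{l,t}|\leq 1$ by \eqref{m_rough}, $|c_{k,l}|\lesssim l^{2k}$ by Proposition \ref{c-est}, and $|A^\rho_t(|\xi-\cdot|^{2k})(\xi)|\lesssim t^{2k}$ by \eqref{rough-A}. Summing over $k=0,\dots,n$ gives $|M^\rho_{n,l,t}|\lesssim\sum_{k=0}^n(lt)^{2k}\lesssim(lt)^{2n}$, where the second step uses $lt\geq 1$. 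Squaring and integrating over $[1/l,T]$ produces $l^{4n}\int_{1/l}^T t^{4n-2\alpha-1}\,dt\sim l^{4n}\cdot l^{2\alpha-4n}=l^{2\alpha}$, where $\alpha>2n$ ensures that the contribution from the lower endpoint dominates over the contribution at $T$.

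\textbf{Conclusion.} Adding the two pieces gives $I_{\alpha,n}^{\rho,T}(l)\lesssim_{d,n,\rho,T}l^{2\alpha}$ for all $l$ with $1/l\leq T$; the remaining finitely many $l$ (those with $1/l>T$) are handled trivially since all quantities entering the integral are uniformly bounded on $[0,T]$. The only genuinely delicate structural point in the argument is arranging the two regime bounds so that their $l$-dependences match at the threshold $t=1/l$ and sum to exactly $l^{2\alpha}$; everything else is routine calculus once the Taylor-remainder formula \eqref{MTaylor} and the rough estimates for $m^\rho_{l,t}$, $c_{k,l}$, and $A^\rho_t(|\xi-\cdot|^{2k})$ are in hand.
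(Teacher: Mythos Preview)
Your proof is correct and follows essentially the same approach as the paper: both split the $t$-integral at $t\sim 1/l$, use the Taylor-remainder formula \eqref{MTaylor} together with \eqref{eq20} and \eqref{z-behavior} to get $|M^\rho_{n,l,t}|\lesssim (lt)^{2(n+1)}$ for the small-$t$ piece, and use the rough bounds $|m^\rho_{l,t}|\leq 1$, $|c_{k,l}|\lesssim l^{2k}$, $|A^\rho_t(|\xi-\cdot|^{2k})|\lesssim t^{2k}$ to obtain $|M^\rho_{n,l,t}|\lesssim (lt)^{2n}$ for the large-$t$ piece. The only cosmetic differences are that the paper chooses $t_l=a/l$ with $a\in(0,T)$ so that the split point always lies in $(0,T)$ (avoiding your separate treatment of finitely many small $l$), and that for the large-$t$ bound the paper writes $|M^\rho_{n,l,t}|\leq |M^\rho_{n-1,l,t}|+\frac{|c_{n,l}|}{2^n}|A^\rho_t(|\xi-\cdot|^{2n})|$ and invokes the already-established estimate \eqref{M-rough-est} at level $n-1$, whereas you bound each summand in the defining expansion directly; both routes yield the same $(lt)^{2n}$.
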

\begin{proof}
Take $a\in (0,T)$ and set $t_l:=\frac{a}{l}$  for $l \in \N$. Then $t_l\leq T$ holds for every $l \in \N$. It follows that
\begin{align*}
I_{\alpha,n}^{\rho,T}(l)&=\int_0^T | M^\rho_{l,t}|^2 \frac{dt}{t^{2\alpha+1}} \\
&\leq \int_0^{t_l} | M^\rho_{n,l,t}|^2 \frac{dt}{t^{2\alpha+1}}+ \int_{t_l}^
{T} | M^\rho_{n,l,t}|^2 \frac{dt}{t^{2\alpha+1}}=: 	I_{\alpha,n}^{\rho,T,1}(l)+	I_{\alpha,n}^{\rho,T,2}(l).
\end{align*}
Using \eqref{eq19b2}, \eqref{eq20}, \eqref{z-behavior}, and \eqref{MTaylor}, we obtain
\begin{align}
|M_{n,l,t}^\rho|&\stackrel{<}{\sim}_{d,n,T}\frac{ |\sphere{d-2}|}{z_t}\cdot\frac{t}{T} \cdot l^{2n+2}\int_{0}^{T}\frac{\sin^{2n+d}\frac{t}{T}\theta}{\left(1+\cos\left(\frac{t}{T}\theta\right) \right)^{n+1}}\rho(\theta) d\theta\nonumber\\
&\leq\frac{ |\sphere{d-2}|}{z_t}\cdot\frac{t}{T} \cdot l^{2n+2}\int_{0}^{T}\left(\frac{t}{T}\theta\right)^{2n+d}\rho(\theta) d\theta\\
&\stackrel{<}{\sim}_{d,n,\rho,T} l^{2n+2}t^{2n+2}\quad\text{ for $n=0,1,\ldots$}\label{M-rough-est}
\end{align}
for $0<t<T$. Therefore it holds that
\begin{align}\label{eqn01}
I_{\alpha,n}^{\rho,T,1}(l)
&\stackrel{<}{\sim}_{d,n,\rho,T}l^{4n+4} \int_0^{t_l} t^{3+4n-2\alpha}dt \stackrel{<}{\sim}_{d,n,\rho,T} l^{2\alpha}.
\end{align}
Then, from \eqref{M-rough-est} and \eqref{rough-A}, we have
\begin{align}
|M^\rho_{n, l,t}|&\leq|M^\rho_{n-1,l,t}|+\frac{|c_{n,l}|}{2^n}\left|A^\rho_t\left(\lvert\xi-\cdot\rvert^{2n}\right)(\xi)\right|\\
&\stackrel{<}{\sim}_{d,n,\rho, T}l^{2n}t^{2n}+l^{2n}t^{2n}\sim_{d,n,\rho, T}l^{2n}t^{2n}\label{eqn02}
\end{align}
for $0\leq t\leq T$. Since $2n<\alpha$, we get
\begin{align}\label{eqn03}
I_{\alpha,n}^{\rho,T,2}(l)&=\int_{t_l}^T |M^\rho_{n,l,t}|^2\frac{dt}{t^{2\alpha+1}}\stackrel{<}{\sim}_{d,n,\rho,T}  l^{4n}\int_{t_l}^T t^{4n}\frac{dt}{t^{2\alpha+1}} \stackrel{<}{\sim}_{d,n,\rho,T} l^{2\alpha}.
\end{align}
The merging of \eqref{eqn01} and \eqref{eqn03} produces the proof of this lemma. 
\end{proof}

\subsection{Upper bound of \(J_{n}^{\rho,T}(l)\)}
\begin{lem}\label{LocUpperboundJ}
For $l, n \in\N$, it holds that
$$
J_{n}^{\rho,T}(l)\stackrel{<}{\sim}_{d,n,\rho,T}l^{4n}.  
$$ 
\end{lem}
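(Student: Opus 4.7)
The plan is to mirror the proof of Lemma \ref{LocUpperboundI}: split the $t$-integration at the scale $t_l:=a/l$ for a fixed $a\in(0,T)$, and crucially exploit the cancellation recorded in \eqref{N-decomp},
\begin{equation*}
N^\rho_{n,l,t}=M^\rho_{n,l,t}-\frac{c_{n,l}}{2^n}A^\rho_t\!\left(|\xi-\cdot|^{2n}\right)(\xi)\,M^\rho_{0,l,t},
\end{equation*}
which is precisely what is needed to cope with the borderline exponent $\alpha=2n$.

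On the near region $[0,t_l]$, I would apply \eqref{M-rough-est} twice---once with index $n$ to get $|M^\rho_{n,l,t}|\lesssim l^{2n+2}t^{2n+2}$ and once with index $0$ to get $|M^\rho_{0,l,t}|\lesssim l^{2}t^{2}$---combined with $|c_{n,l}|\lesssim l^{2n}$ from Proposition \ref{c-est} and $|A^\rho_t(|\xi-\cdot|^{2n})(\xi)|\lesssim t^{2n}$ from \eqref{rough-A}. Both summands in \eqref{N-decomp} are then of size $l^{2n+2}t^{2n+2}$, so
\begin{equation*}
\int_{0}^{t_l}|N^\rho_{n,l,t}|^{2}\frac{dt}{t^{4n+1}}\lesssim l^{4n+4}\int_{0}^{t_l}t^{3}\,dt\lesssim l^{4n}.
\end{equation*}

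The main obstacle lies in the far region $[t_l,T]$. The naive bound $|N^\rho_{n,l,t}|\lesssim l^{2n}t^{2n}$, obtained from \eqref{eqn02} together with the rough estimate $|M^\rho_{0,l,t}|\leq 2$ coming from \eqref{m_rough}, would give only $l^{4n}\log(T/t_l)\sim l^{4n}\log l$ after integration against $dt/t^{4n+1}$, which is too weak. To remove this borderline logarithm I would rewrite $N^\rho_{n,l,t}$ as a single double integral over $(\theta,\phi)\in[0,T]^2$ by multiplying $M^\rho_{n-1,l,t}$ by the normalisation factor $\frac{|\sphere{d-2}|t}{z_tT}\int_{0}^{T}\sin^{d-2}(t\phi/T)\rho(\phi)\,d\phi=1$; this brings the two terms in the alternative form $N^\rho_{n,l,t}=M^\rho_{n-1,l,t}-\frac{c_{n,l}}{2^n}A^\rho_t(|\xi-\cdot|^{2n})(\xi)\,m^\rho_{l,t}$ under a common integrand with kernel
\begin{equation*}
K_l(\theta,\phi;t):=P_{l,d}^{(n)}(\tilde\tau_\theta)-P_{l,d}^{(n)}(1)\,P_{l,d}\!\left(\cos\tfrac{t\phi}{T}\right).
\end{equation*}
Partitioning $[0,T]^{2}$ at the scale $T/(lt)$ in each of the variables $\theta,\phi$, I would combine the Taylor-remainder bound $|P_{l,d}^{(n)}(\tilde\tau_\theta)-P_{l,d}^{(n)}(1)|\lesssim l^{2n+2}(1-\cos(t\theta/T))$ near the pole with the sharp Legendre estimate \eqref{Psharp} on the complementary far-from-pole region, using Cauchy--Schwarz against $\rho\in\Le{2}{0,T}$. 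This should yield an extra decay factor $(lt)^{-\delta}$ beyond the naive bound and thereby convert $\int_{t_l}^{T}dt/t$ into a bounded integral.

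The hard part will be making this refined analysis uniform in the dimension $d\ge 2$: when $d=2$ the sharp bound \eqref{Psharp} is vacuous, and one must instead use the Chebyshev identity $P_{l,2}(\cos\theta)=\cos(l\theta)$ so that $m^\rho_{l,t}$ becomes, up to normalisation, a Fourier cosine transform of $\rho$ at frequency $lt/T$, and extract the required gain from a Plancherel-type estimate on dyadic intervals in $t$. Once the far-region bound is established, summing with the near-region contribution produces $J_n^{\rho,T}(l)\lesssim_{d,n,\rho,T}l^{4n}$ as desired.
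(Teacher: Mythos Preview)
Your near-region estimate on $[0,t_l]$ is correct and identical to the paper's. Your diagnosis of the borderline logarithm on $[t_l,T]$ is also correct. Where you diverge from the paper is in the cure, and there your argument has a real gap.

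The paper does not pass through a double integral in $(\theta,\phi)$ or a joint partition of $[0,T]^2$. It works directly with the \emph{definition}
\[
N^\rho_{n,l,t}=M^\rho_{n-1,l,t}-\frac{c_{n,l}}{2^n}A^\rho_t\bigl(|\xi-\cdot|^{2n}\bigr)(\xi)\,m^\rho_{l,t}
\]
(not with \eqref{N-decomp}) and a plain triangle inequality, after one further fixed cut at some $\tilde T\in(0,\pi/4]$. On $[\tilde T,T]$ the naive bound $|N^\rho_{n,l,t}|\lesssim l^{2n}t^{2n}$ already suffices, since the interval has fixed length. On $[t_l,\tilde T]$ the two pieces are bounded separately: the first by $|M^\rho_{n-1,l,t}|\lesssim l^{2(n-1)}t^{2(n-1)}$ (one full order below $M^\rho_{n,l,t}$, so that $\int_{t_l}^{\tilde T}|M^\rho_{n-1,l,t}|^2\,t^{-4n-1}dt\lesssim l^{4n}$ with no logarithm), and the second by the pointwise decay $|m^\rho_{l,t}|\lesssim l^{2-d}+(lt)^{(2-d)/2}$, obtained by inserting \eqref{Psharp} into the single $\theta$-integral \eqref{eq05}. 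No auxiliary variable $\phi$, no kernel $K_l(\theta,\phi;t)$, and no cancellation between the two terms are needed; the two gains live on different summands. Your scheme may be completable, but the sentence ``this should yield an extra decay factor $(lt)^{-\delta}$'' is precisely the step that requires proof, and as stated it is a strictly harder version of what the paper achieves by the elementary term-by-term route above. In particular, the uniform bound $|K_l|\lesssim l^{2n}$ holds on all of $[0,T]^2$, so the gain cannot come from the kernel alone; you would in effect have to re-discover the splitting $K_l=[P_{l,d}^{(n)}(\tilde\tau_\theta)-P_{l,d}^{(n)}(1)]+P_{l,d}^{(n)}(1)[1-P_{l,d}(\cos\tfrac{t\phi}{T})]$ inside the double integral.

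Your remark about $d=2$ is well taken: \eqref{Psharp} is vacuous on the circle, and the paper's displayed computation, which ends with ``because $d\ge2$'', leaves a factor $\log l$ unaccounted for there. Your proposal to use $P_{l,2}(\cos\theta)=\cos(l\theta)$ and a Plancherel-type bound on $\int_{t_l}^{\tilde T}|m^\rho_{l,t}|^2\,t^{-1}dt$ (after the substitution $s=lt$ this becomes an $l$-independent integral of the squared cosine transform of $\rho$ against $ds/s$, finite since $\rho\in\Le{2}{0,T}$) is a legitimate way to close that gap.
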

\begin{proof}
Similar to the proof of Lemma \ref{LocUpperboundI}, we split the integration into two parts:
\begin{equation}\label{eq15a}
J_n^{\rho, T}(l)=\int_0^{t_l} \left|N_{n,l,t}^\rho\right|^2 \frac{dt}{t^{4n+1}}+\int_{t_l}^T \left|N_{n,l,t}^\rho\right|^2 \frac{dt}{t^{4n+1}}=:J^{\rho, T, 1}_n(l)+J^{\rho, T, 2}_n(l),
\end{equation}
with $t_l=\frac{a}{l}$ and $a>0$.

Using \eqref{N-decomp},  \eqref{M-rough-est}, \eqref{rough-A},  and Proposition \ref{c-est}, we obtain
\begin{align*}
|N_{n,l,t}^\rho|&\leq |M^\rho_{n,l,t}|+\frac{|c_{n,l}|}{2^n}\cdot|A^\rho_t\left(\lvert\xi-\cdot\rvert^{2n}\right)(\xi)|\cdot|M^\rho_{0,l,t}|
\nonumber\\
&\stackrel{<}{\sim}_{d,n,T}l^{2n+2}t^{2n+2}+l^{2n}t^{2n}\cdot l^2 t^2\stackrel{<}{\sim}_{d,n,\rho,T} l^{2n+2}t^{2n+2},
\end{align*}
that is,
\begin{align}\label{eqn01-2}
J_{n}^{\rho,T,1}(l)
&\stackrel{<}{\sim}_{d,n,\rho,T}l^{4n+4} \int_0^{t_{l}} t^{2(2n+2)-(4n+1)}dt \stackrel{<}{\sim}_{d,n,\rho,T} l^{4n}.
\end{align}

On the other hand, from \eqref{eqn02}, we have
\begin{align}
|N^\rho_{n, l,t}|&\leq\left|M^\rho_{n,l,t}\right|+\frac{|c_{n,l}|}{2^n}\left|A^\rho_t\left(\lvert\xi-\cdot\rvert^{2n}\right)(\xi)\right|\cdot |M^\rho_{0,l,t}|\stackrel{<}{\sim}_{d,n,\rho, T}l^{2n}t^{2n}.\label{eqn02-2}
\end{align}
Here we take $\tilde T\in\left(0, T\right)$ and we may  assume that $t_l\leq \tilde T$. Then we split the integration into further two parts:
\begin{align*}
J_{n}^{\rho,T,2}(l)&=\int_{t_l}^{\tilde T} |N^\rho_{n,l,t}|^2\frac{dt}{t^{4n+1}}+\int_{\tilde T}^T |N^\rho_{n,l,t}|^2\frac{dt}{t^{4n+1}} =:J_{n, 1}^{\rho,T,2}(l)+J_{n,2}^{\rho,T,2}(l)
\end{align*}
For $J_{n,2}^{\rho,T,2}(l)$, we get as follows:
\begin{align*}
J_{n,2}^{\rho,T,2}(l)
&=\int_{\tilde T}^T |N^\rho_{n,l,t}|^2\frac{dt}{t^{4n+1}} \stackrel{<}{\sim}_{d,n,\rho,T}  \int_{\tilde T}^T l^{4n}t^{4n}\frac{dt}{t^{4n+1}}\stackrel{<}{\sim}_{d,n,\rho,T, \tilde T} l^{4n}.
\end{align*}

In order to estimate $J_{n,1}^{\rho,T,2}(l)$ part, we improve the estimeat \eqref{m_rough}. We may assume  $\tilde T\leq\frac{\pi}{4}$. Then for $t_l\leq t\leq\tilde T$ and $\tilde T\leq \theta \leq T$, it holds that
$$
\frac{t_l}{T}\tilde T\leq\frac{t}{T}\theta\leq t\leq \tilde T<\frac{\pi}{4}.
$$ 
Therefore we use  \eqref{Psharp} and get
\begin{align*}
&|m^\rho_{l,t}| \stackrel{<}{\sim}_{d,n,\rho,T} t^{2-d}\int_{0}^{T} \left|P_{l,d}\left(\cos\left(\frac{t}{T}\theta\right)\right)\right|\sin ^{d-2}\left(\frac{t}{T}\theta\right)\, \rho\left(\theta\right)d\theta\\
&\stackrel{<}{\sim}_{d,n,\rho,T}t^{2-d}\left\{\int_0^{\frac{\tilde T}{T}t_l}\sin^{d-2}\left(\frac{t}{T}\theta\right)\rho(\theta)d\theta+\int_{\frac{\tilde T}{T}t_l}^T l^{\frac{2-d}{2}}\left(\frac{t}{T}\theta\right)^{\frac{d-2}{2}}\rho(\theta)d\theta\right\}\\
&\stackrel{<}{\sim}_{d,n,\rho,T}\int_0^{\frac{\tilde T}{T}t_l}\theta^{d-2}\rho(\theta)d\theta+l^{\frac{2-d}{2}}\cdot t^{\frac{2-d}{2}}\cdot \int_{\frac{\tilde T}{T}t_l}^T \theta^{\frac{d-2}{2}}\rho(\theta)d\theta\\
&\stackrel{<}{\sim}_{d,n,\rho,T,\tilde T}l^{2-d}+l^{\frac{2-d}{2}}\cdot t^{\frac{2-d}{2}}.
\end{align*}
Therefore,  from \eqref{eqn02}, we get 
\begin{align}
&|N^\rho_{n, l,t}|\leq\left|M^\rho_{n-1,l,t}\right|+\frac{|c_{n,l}|}{2^n}\left|A^\rho_t\left(\lvert\xi-\cdot\rvert^{2n}\right)(\xi)\right|\cdot |m^\rho_{n,l}|\\
&\stackrel{<}{\sim}_{d,n,\rho, T}l^{2(n-1)}t^{2(n-1)}\nonumber\\
&\qquad+l^{2n}\cdot t^{2n}\cdot\left(l^{2-d}+l^{\frac{2-d}{2}}\cdot t^{\frac{2-d}{2}}\right) \nonumber\\
&\stackrel{<}{\sim}_{d,n,\rho, T}l^{2(n-1)}t^{2(n-1)}+l^{2n-d+2}t^{2n}+l^{2n-\frac{d}{2}+1}t^{2n-\frac{d}{2}+1}.\label{eqn02}
\end{align}
Consequently we get
\begin{align}
J^{\rho, T, 2}_{n,1}(l)&\stackrel{<}{\sim}_{d,n,\rho, T}\int_{t_l}^{\tilde T}|N^\rho_{n,l,t}|^2\frac{dt}{t^{4n+1}}\\
&\stackrel{<}{\sim}_{d,n,\rho, T}\int_{t_l}^{\tilde T}\left(l^{4(n-1)}t^{-3}+l^{4n-2d+2}t^{-1}+l^{4n-d+2}t^{-d+1}\right)dt\\
&\stackrel{<}{\sim}_{d,n,\rho, T}l^{4n-2}+l^{4n-2d+3}+l^{4n}\stackrel{<}{\sim}_{d,n,\rho, T}l^{4n}
\end{align}
because $d\geq 2$.
\end{proof}

\subsection{Preparation for lower estimates}
The argument for lower estimate is rather complicated. We start from preparing another estimates of $P_{l,d}(s)$.
\begin{lem}\label{lbLegendre}
For $n\in\N$ and $l\in\N$ such that $l\geq n$, we set
$$
k_{d,n}(l):=
\left\{
\begin{array}{ll}
\frac{1}{2}  &\quad (l=n) \\
\frac{2n+d-1}{(l+n+d-2)(l-n)}\left(\leq 1\right)  &\quad (l\geq n+1)
\end{array}
\right.
$$
Then, for every $\epsilon\in(0,1)$, it holds that 
\begin{equation}\label{Leg08e}
P_{l,d}^{(n)}(s)\geq(1-\vep)P_{l,d}^{(n)}(1)>0\quad\text{for $1-\vep k_{l,d,n}\leq s\leq 1$.}
\end{equation}
\end{lem}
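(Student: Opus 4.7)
The plan is to prove the bound by a first-order Taylor estimate of $P_{l,d}^{(n)}$ centered at $s=1$, combining the fundamental theorem of calculus with the uniform pointwise bound \eqref{eq20}. The design principle behind the definition of $k_{d,n}(l)$ is that it should be, up to a trivial adjustment in the degenerate case, the reciprocal of the ratio $P_{l,d}^{(n+1)}(1)/P_{l,d}^{(n)}(1)$, so that the linear correction to $P_{l,d}^{(n)}(1)$ contributes exactly $\vep\, P_{l,d}^{(n)}(1)$ when $1-s = \vep k_{d,n}(l)$.

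First I would dispose of the boundary case $l=n$. Since $P_{n,d}$ is a polynomial of degree $n$, its $n$-th derivative is the constant $P_{n,d}^{(n)}(1)$, which is strictly positive by \eqref{eq19b2}. The inequality therefore holds trivially on all of $[-1,1]$ for every $\vep\in(0,1)$, and the specific value $k_{d,n}(n)=1/2$ is essentially cosmetic.

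For $l\geq n+1$, I would write by the fundamental theorem of calculus
\begin{equation*}
P_{l,d}^{(n)}(s) = P_{l,d}^{(n)}(1) - \int_s^1 P_{l,d}^{(n+1)}(u)\,du,
\end{equation*}
and bound the integrand uniformly by $P_{l,d}^{(n+1)}(1)$ using \eqref{eq20} to obtain
\begin{equation*}
P_{l,d}^{(n)}(s) \geq P_{l,d}^{(n)}(1) - (1-s)\,P_{l,d}^{(n+1)}(1).
\end{equation*}
A direct computation of the Gamma-function quotients from \eqref{eq19b2} then yields
\begin{equation*}
\frac{P_{l,d}^{(n+1)}(1)}{P_{l,d}^{(n)}(1)} = \frac{(l+n+d-2)(l-n)}{2n+d-1} = \frac{1}{k_{d,n}(l)},
\end{equation*}
so substituting the hypothesis $1-s \leq \vep k_{d,n}(l)$ immediately gives the claim
\begin{equation*}
P_{l,d}^{(n)}(s) \geq (1-\vep)\,P_{l,d}^{(n)}(1) > 0.
\end{equation*}

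There is no serious obstacle: once one recognizes that $k_{d,n}(l)$ is precisely the reciprocal of the derivative ratio at the endpoint, the whole argument reduces to a one-line factorial manipulation. The only minor bookkeeping is to check that $k_{d,n}(l)\leq 1$ for $l\geq n+1$, so that $1-\vep k_{d,n}(l)\in[0,1]$ and \eqref{eq20} is genuinely applicable over the full range of $s$; this is immediate from $(l+n+d-2)(l-n)\geq 2n+d-1$, with equality at $l=n+1$.
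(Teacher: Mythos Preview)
Your proposal is correct and follows essentially the same approach as the paper: the paper uses the mean value theorem (in place of your fundamental theorem of calculus) together with \eqref{eq20} to obtain $P_{l,d}^{(n)}(s)\ge P_{l,d}^{(n)}(1)-(1-s)P_{l,d}^{(n+1)}(1)$, then computes the same ratio $P_{l,d}^{(n)}(1)/P_{l,d}^{(n+1)}(1)=k_{d,n}(l)$ from \eqref{eq19b2} and substitutes. The two arguments are equivalent, and your additional remark verifying $k_{d,n}(l)\le 1$ with equality at $l=n+1$ is a welcome clarification.
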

\begin{proof}
The conclusion is obvious for $l=n$ since $P_{l,d}(s)$ will become constant for the $l$-th derivative. Suppose $l\geq n+1$. The mean value theorem and \eqref{eq20} for $k=2$ give us
\begin{equation}\label{Leg08f}
P^{(n)}_{l,d}(s) \geq P^{(n)}_{l,d}(1) - P_{l,d}^{(n+1)}(1) (1-s).
\end{equation}
From \eqref{eq19b}, we have
\[
\frac{P_{l,d}^{(n)}(1)}{P^{(n+1)}_{l,d}(1)}=\frac{2n+d-1}{(l+n+d-2)(l-n)}=k_{d,n}(l) \leq 1
\]
for $l\geq n+1$. Therefore, for
$$
s\geq 1-\vep\frac{P_{l,d}^{(n)}(1)}{P^{(n+1)}_{l,d}(1)}=1-\vep k_{d,n}(l),
$$
it holds that
\begin{align*}
P^{(n)}_{l,d}(s) &\geq P^{(n)}_{l,d}(1) - P_{l,d}^{(n+1)}(1) (1-s)\geq(1-\vep)P_{l.d}^{(n)}(1)>0.
\end{align*}
\end{proof}
As an easy consequence of the definition of $k_{d,n}(l)$, we get as follows: 
\begin{lem}\label{angledef}
For $n\in\N$, $l\in\N$ such that $l\geq n$, and $\vep\in(0,1/2)$, we set $a_{d,n,\vep }(l)\in\left(0,\frac{\pi}{3}\right]$ satisfying
$$
\cos a_{d,n,\vep}(l)= 1-\vep k_{d,n}(l).
$$
Then it holds that
$$
a_{d,n,\vep }(l)\sim \sqrt{\vep k_{d,l}(n)}\sim \vep^{\frac{1}{2}}\left\{l(l+d-2)\right\}^{-\frac{1}{2}}\sim_d \vep^{\frac{1}{2}}l^{-1}.
$$
\end{lem}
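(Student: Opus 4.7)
The plan is to chain together three elementary equivalences, one for each $\sim$ appearing in the statement.

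First, since $k_{d,n}(l)\leq 1$ and $\vep<1/2$, we have $1-\vep k_{d,n}(l)\in(1/2,1)$, which lies in $\cos([0,\pi/3])=[1/2,1]$; hence $a_{d,n,\vep}(l)$ is well-defined in $(0,\pi/3]$. For the first equivalence $a_{d,n,\vep}(l)\sim\sqrt{\vep k_{d,n}(l)}$, I would use $1-\cos a = 2\sin^2(a/2)$ together with the standard bounds $\tfrac{2}{\pi}x\leq \sin x\leq x$ on $[0,\pi/2]$, which yield $1-\cos a\sim a^2$ uniformly on $[0,\pi/3]$. Substituting the defining relation $1-\cos a_{d,n,\vep}(l)=\vep k_{d,n}(l)$ and extracting a square root produces the claim.

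The second equivalence reduces to $k_{d,n}(l)\sim_{d,n}\{l(l+d-2)\}^{-1}$ for $l\geq n$. The case $l=n$ is immediate since both sides are positive quantities depending only on $d,n$. For $l\geq n+1$, I would multiply out the definition to get
$$
k_{d,n}(l)\cdot l(l+d-2)=\frac{(2n+d-1)\,l(l+d-2)}{(l+n+d-2)(l-n)},
$$
and exploit the algebraic identity $l(l+d-2)-(l+n+d-2)(l-n)=n(n+d-2)$. This identity immediately gives a lower bound $\geq 2n+d-1$ on the ratio, and it also shows that the ratio equals $(2n+d-1)\bigl(1+\tfrac{n(n+d-2)}{(l+n+d-2)(l-n)}\bigr)$, which is decreasing in $l$ and therefore bounded above by its value at $l=n+1$. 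Both bounds depend only on $d,n$.

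Finally, $\{l(l+d-2)\}^{-1/2}\sim_d l^{-1}$ is elementary: for $l\geq 1$ and $d\geq 2$, the two-sided estimate $l^2\leq l(l+d-2)\leq (d-1)l^2$ follows directly. Composing the three chained equivalences yields the lemma. Since every ingredient is a one-line elementary estimate and the statement is presented as an \emph{easy consequence} of Lemma \ref{lbLegendre}, no serious obstacle is expected; the only mild subtlety is the separate treatment of the boundary case $l=n$, where $k_{d,n}$ takes the value $1/2$ rather than the rational expression valid for $l\geq n+1$, but this case is absorbed into the implicit constants depending on $d$ and $n$.
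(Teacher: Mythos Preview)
Your proof is correct and follows the natural approach the paper leaves implicit: the paper states this lemma as ``an easy consequence of the definition of $k_{d,n}(l)$'' and provides no argument beyond the remark that $\vep k_{d,n}(l)\le \vep\le 1/2$ justifies $a_{d,n,\vep}(l)\in(0,\pi/3]$. Your chain of three elementary equivalences (the Taylor/half-angle estimate for $1-\cos a$, the algebraic identity $l(l+d-2)-(l+n+d-2)(l-n)=n(n+d-2)$, and the trivial bound $l^2\le l(l+d-2)\le(d-1)l^2$) is exactly the computation the paper expects the reader to supply.
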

We note that we are able to assume that $a_{d,n,\vep}(l)\leq\pi/3$ because  $\vep k_{d,n}(l)\leq \vep\leq\frac{1}{2}$.

Summarizing the above estimate, we get the following fact which we often use:
\begin{lem}\label{est_for_mean}
For $n\in\N$ and $\vep\in(0,1)$, it holds that
\begin{equation}\label{Leg08e}
P_{l,d}^{(n)}(s)\geq(1-\vep)P_{l,d}^{(n)}(1)>0\quad\text{for $s\in(\cos a_{d,n,\vep}(l),1)$.}
\end{equation}
\end{lem}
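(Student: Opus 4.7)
The statement of Lemma \ref{est_for_mean} is a repackaging of Lemma \ref{lbLegendre} using the angular parameter $a_{d,n,\vep}(l)$ introduced in Lemma \ref{angledef}, as already suggested by the preamble ``Summarizing the above estimate''. The plan is therefore not to reprove the underlying bound on $P_{l,d}^{(n)}$, but simply to translate the scalar threshold ``$s \geq 1 - \vep k_{d,n}(l)$'' of Lemma \ref{lbLegendre} into the geometric condition ``$s > \cos a_{d,n,\vep}(l)$'' via the monotonicity of the cosine on $[0,\pi]$.

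Concretely, I would fix $n\in\N$, $l\geq n$, and $\vep\in(0,1)$, and take any $s\in(\cos a_{d,n,\vep}(l),\,1)$. By the defining relation $\cos a_{d,n,\vep}(l) = 1 - \vep k_{d,n}(l)$ from Lemma \ref{angledef}, this condition is equivalent to
\[
1 - \vep k_{d,n}(l) \;<\; s \;\leq\; 1,
\]
which places $s$ inside the interval on which Lemma \ref{lbLegendre} applies. Invoking the inequality \eqref{Leg08e} of Lemma \ref{lbLegendre} at this $s$ yields
\[
P_{l,d}^{(n)}(s) \;\geq\; (1-\vep)\,P_{l,d}^{(n)}(1) \;>\; 0,
\]
which is exactly the desired conclusion. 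The boundary case $l = n$ is handled by the same citation, since then $P_{l,d}^{(n)}$ is the positive constant $P_{n,d}^{(n)}(1)$ and the bound becomes trivial.

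The main ``obstacle'' is bookkeeping rather than mathematics: one should check that the parameter range $\vep\in(0,1)$ in the present lemma is compatible with Lemma \ref{angledef}, which was stated for $\vep\in(0,1/2)$ in order to force $a_{d,n,\vep}(l) \leq \pi/3$. Since $k_{d,n}(l)\leq 1$, the value $1-\vep k_{d,n}(l)$ stays strictly positive for every $\vep\in(0,1)$, so $a_{d,n,\vep}(l)\in(0,\pi/2)$ remains well defined and the asymptotic $a_{d,n,\vep}(l)\sim_d \vep^{1/2}l^{-1}$ continues to hold with constants depending only on $d$. I would simply verify this compatibility in one line and then cite Lemma \ref{lbLegendre}; no further difficulty is anticipated.
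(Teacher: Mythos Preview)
Your proposal is correct and matches the paper's approach exactly: the paper does not give a separate proof of this lemma at all, presenting it under the rubric ``Summarizing the above estimate'' as the direct combination of Lemma~\ref{lbLegendre} with the definition of $a_{d,n,\vep}(l)$ in Lemma~\ref{angledef}. Your observation about extending the range of $\vep$ from $(0,1/2)$ to $(0,1)$ is a valid clarification that the paper leaves implicit.
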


\subsection{Lower bound of $I_{n,\alpha}^{\rho,T}(l)$}
\begin{lem}\label{LocLowerboundI}
For $n\in\N\cup\{0\}$, $l\in\N$, and $2n < \alpha < 2(n+1)$, it holds that
$$
I_\alpha^{\rho,T}(l)\stackrel{>}{\sim}_{d,\rho,T} l^{2\alpha}.  
$$
\end{lem}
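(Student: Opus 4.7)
The plan is to restrict the integration in the definition of $I_{\alpha,n}^{\rho,T}(l)$ to an interval $(0,t_l)$ with $t_l := c_\ast/l$ for a sufficiently small $c_\ast>0$, on which I will establish the sharp pointwise lower bound $|M^\rho_{n,l,t}| \gtrsim_{d,n,\rho,T} l^{2(n+1)}\,t^{2(n+1)}$. Once this is in hand, the hypothesis $\alpha<2(n+1)$ makes $t^{4(n+1)-2\alpha-1}$ integrable at $0$, so
\[
I_{\alpha,n}^{\rho,T}(l) \geq \int_0^{t_l} |M^\rho_{n,l,t}|^2 \frac{dt}{t^{2\alpha+1}} \gtrsim l^{4(n+1)} \int_0^{t_l} t^{4(n+1)-2\alpha-1}\,dt \sim l^{4(n+1)}\, t_l^{4(n+1)-2\alpha} \sim l^{2\alpha},
\]
as required.

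The principal obstacle is ruling out sign cancellation in the Taylor-remainder representation \eqref{MTaylor} of $M^\rho_{n,l,t}$, whose integrand contains $P_{l,d}^{(n+1)}(\tilde\tau(\theta))$ with $\tilde\tau(\theta) \in (\cos(t\theta/T),1)$ varying as $\theta$ sweeps $[0,T]$. I handle this with Lemma~\ref{est_for_mean}: for a fixed $\vep \in (0,1/2)$ (say $\vep=1/2$), the derivative $P_{l,d}^{(n+1)}$ keeps the sign of $P_{l,d}^{(n+1)}(1)$ and satisfies $|P_{l,d}^{(n+1)}(s)| \geq (1-\vep)\,P_{l,d}^{(n+1)}(1) \sim l^{2(n+1)}$ on $(\cos a_{d,n+1,\vep}(l),1)$. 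To force $\tilde\tau(\theta)$ into this interval for \emph{every} $\theta\in[0,T]$, it is enough to impose $t \leq a_{d,n+1,\vep}(l)$; Lemma~\ref{angledef} gives $a_{d,n+1,\vep}(l) \sim \vep^{1/2}/l$, so choosing $c_\ast$ small enough relative to $\vep$ guarantees this for every $t\in(0,t_l)$.

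With sign-constancy in place, I further restrict the $\theta$-integration to $[t_0,T_0]$, where $\int_{t_0}^{T_0}\rho(\theta)\,d\theta>0$ by the definition of $W(T,t_0,T_0)$. Because $t\theta/T$ stays in a compact subinterval of $[0,\pi/3]$, the elementary bounds $1-\cos x \gtrsim x^2$ and $\sin x \gtrsim x$ apply, and combined with $z_t\sim t^{d-1}$ from Lemma~\ref{rho-bound} yield
\[
|M^\rho_{n,l,t}| \gtrsim \frac{t}{z_t}\cdot l^{2(n+1)} \int_{t_0}^{T_0}\!\left(\frac{t\theta}{T}\right)^{2(n+1)+d-2}\!\rho(\theta)\,d\theta \sim_{d,n,\rho,T} l^{2(n+1)}\,t^{2(n+1)},
\]
which is exactly the pointwise bound needed above. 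The remaining verifications (checking $t_l\leq T$ for all $l\in\mathbb{N}$, which is automatic once $c_\ast\leq T$, and handling the $n=0$ case where Lemma~\ref{est_for_mean} is applied to the Legendre polynomial itself via $P_{l,d}(1)=1$) are routine bookkeeping.
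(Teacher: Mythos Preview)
Your proof is correct and follows essentially the same route as the paper: restrict $t$ to an interval of length $\sim l^{-1}$ (the paper uses the explicit endpoint $(T/T_0)\,a_{d,n+1,1/2}(l)$, you use $c_\ast/l\le a_{d,n+1,1/2}(l)$), invoke Lemma~\ref{est_for_mean} to fix the sign of $P_{l,d}^{(n+1)}$ in the remainder representation \eqref{MTaylor}, and then restrict the $\theta$-integral to $[t_0,T_0]$ to exploit $\int_{t_0}^{T_0}\rho>0$. One minor slip in your closing parenthetical: the $n=0$ case needs no special handling---the remainder still involves $P_{l,d}^{(n+1)}=P_{l,d}^{(1)}$, so Lemma~\ref{est_for_mean} applies directly with index $1\in\N$, and $P_{l,d}(1)=1$ plays no role here.
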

\begin{proof}
By using Lemma \ref{est_for_mean} with a fixed $\vep$, say \(\vep = \frac{1}{2}\). Then for $0\leq \theta\leq a_{d,n+1,\frac{1}{2}}(l)\frac{T}{t}$, it holds that $0\leq\frac{t}{T}\theta\leq a_{d,n+1,\frac{1}{2}}(l)$. Therefore
$$
\cos\left(\frac{t}{T}\theta\right)\geq \cos a_{d,n+1,\frac{1}{2}}(l)=1-\frac{1}{2}k_{d,n+1}(l)
$$
and
$$
P_{l,d}^{(n+1)}(\xi)\geq \frac{1}{2}P_{l.d}^{(n+1)}(1)> 0 \quad\text{for $\xi\in\left(\cos\left(\frac{t}{T}\theta\right),1\right)$}.
$$
Using \eqref{eq20} and \eqref{MTaylor}, we get
\begin{align*}
|M^\rho_{n, l,t}|&\stackrel{>}{\sim}_{d,n,\rho,T} t^{2-d} l^{2(n+1)}\\
&\quad\times\int_0^{\min\left(T,a_{d,n+1,\frac{1}{2}}(l)\frac{T}{t}\right)}\left(1-\cos\left(\frac{t}{T}\theta\right)\right)^{n+1}\sin ^{d-2}\left(\frac{t}{T}\theta\right)  \, \rho(\theta)d\theta\aka{.}
\end{align*}
Moreover, for $t$ satisfying 
\begin{equation}
\label{a-range}
\min(T,a_{d,n+1,\frac{1}{2}}(l)T/t)\geq T_0\quad\Leftrightarrow \quad t\leq \frac{T}{T_0}a_{d,n+1,\frac{1}{2}}(l),
\end{equation}
it holds that
\begin{align}
&\int_0^{\min(T,a_{d,n+1,\frac{1}{2}}(l)T/t)} \left(1-\cos\left(\frac{t}{T}\theta\right) \right)^{n+1}\sin^{d-2}\left(\frac{t}{T}\theta\right) \, \rho(\theta)d\theta\nonumber\\
&\geq \frac{1}{2^{n+1}}\int_{t_0}^{T_0} \sin^{2n+d}\left(\frac{t}{T}\theta\right) \, \rho(\theta)d\theta\stackrel{>}{\sim}_{d,\rho,T} t^{2n+d}\int_{t_0}^{T_0} \theta^{2n+d}\,\rho(\theta)d\theta\nonumber\\
&\stackrel{>}{\sim}_{d,\rho,T}t^{2n+d}.
\end{align}
Here we used the fact that $\sin\theta\geq\frac{\sin T_0}{T_0}\theta$ for $\theta\in [0,T_0]$.
Consequently, we get 
\begin{align*}
M^\rho_{n,l,t}&\stackrel{>}{\sim}_{d,n,T,\rho} l^{2(n+1)} t^{2(n+1)} 
\end{align*}
for $t$ in the range \eqref{a-range}. Therefore it follows that
\begin{align*}
I_\alpha^{\rho,T}(l) &= \int_{0}^{T} \left| M^\rho_{n,l,t} \right|^2 \frac{dt}{t^{2\alpha+1}} \geq \int_{0}^{\frac{T}{T_0}a_{d,n+1,\frac{1}{2}}(l)} \left| M^\rho_{n,l,t} \right|^2 \frac{dt}{t^{2\alpha+1}} \nonumber\\
&\stackrel{>}{\sim}_{d,n,\rho,T} l^{4(n+1)}\int_{0}^{\frac{T}{T_0}a_{d,n+1,\frac{1}{2}}(l)} t^{4(n+1)-2\alpha-1} \, dt\\
&\stackrel{>}{\sim}_{d,n,\rho,T} l^{4(n+1)} a_{d,n+1,\frac{1}{2}}(l)^{4(n+1)-2\alpha}\stackrel{>}{\sim}_{d,n,\rho,T}l^{2\alpha}.
\end{align*}
\end{proof}

\subsection{Lower estimate of $J_{n}^{\rho,T}(l)$}
Before we state the conclusion for this part, we introduce the following condition on $\rho\in W(T,t_0,T_0)$:
\begin{equation}\label{r-cond-fine}
\frac{d-1}{d+2n-1}\cdot\frac{\left(\int_0^{T}\theta^{d-2}\rho(\theta)d\theta\right)^2\int_0^{T}\theta^{2n+d}\rho(\theta)d\theta}{\int_0^{T_0}\theta^{d-2}\rho(\theta)d\theta\int_0^{T_0}\theta^{d}\rho(\theta)d\theta\int_0^{T_0}\theta^{2n+d-2}\rho(\theta)d\theta}<1.
\end{equation}
We will give examples that satisfy this complicated condition \eqref{r-cond-fine}, see Proposition \ref{rho-example}.

\begin{lem}\label{LocLowerboundJ}
Suppose that $\rho\in W(T,t_0,T_0)$ satisfying \eqref{r-cond-fine}. Then for $l, n \in\N$, it holds that
$$
J_{n}^{\rho,T}(l)\stackrel{>}{\sim}_{d,n,\rho,T}l^{4n}.  
$$ 
\end{lem}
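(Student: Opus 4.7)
The plan is to restrict attention to a small range of $t$ of order $1/l$, show that in this range $|N^\rho_{n,l,t}| \gtrsim_{d,n,\rho,T} l^{2n+2} t^{2n+2}$, and then integrate against $dt/t^{4n+1}$ to obtain the desired bound $l^{4n}$. This mirrors the strategy of Lemma \ref{LocLowerboundI}. However, unlike there, $N^\rho_{n,l,t}$ is a difference of two quantities each of exact order $l^{2n+2} t^{2n+2}$ in the regime $lt \to 0$, so a naive application of the Taylor expansion permits full cancellation. Condition \eqref{r-cond-fine} is designed precisely to prevent this cancellation.

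First I would expand the three building blocks $M^\rho_{0,l,t}$, $A^\rho_t(|\xi - \cdot|^{2n})(\xi)$, and $M^\rho_{n,l,t}$ to first nonvanishing order in $lt \ll 1$. Using Lemma \ref{rho-bound} together with Taylor expansions of $\cos$ and $\sin$ near $0$ and of $P_{l,d}$ near $1$, each expansion has the form (a $\rho$-moment quotient) $\times$ (a power of $t/T$) $\times$ (a derivative of $P_{l,d}$ at $1$), with remainder bounded via \eqref{eq20}. The three building blocks have leading orders $l^2 t^2$, $t^{2n}$, and $l^{2n+2} t^{2n+2}$, respectively.

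Combining these, one finds $N^\rho_{n,l,t} = D_{d,n}(l) \cdot (t/T)^{2n+2} \cdot \Delta(\rho) + E_{n,l,t}$, where $D_{d,n}(l) \sim l^{2n+2}$ is an explicit ratio of Legendre-polynomial derivatives at $1$ (governed by $P^{(n+1)}_{l,d}(1)/(n+1)!$ versus $P^{(n)}_{l,d}(1)\,P'_{l,d}(1)/(n!\cdot 1!)$, whose asymptotic ratio contains the factor $(d-1)/(d+2n-1)$ visible in \eqref{r-cond-fine}), $\Delta(\rho)$ is a specific linear combination of $\rho$-moment integrals, and the error $E_{n,l,t}$ is $O(l^{2n+4} t^{2n+4})$. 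Condition \eqref{r-cond-fine} then amounts exactly to $\Delta(\rho) \neq 0$ with a definite sign; the role of the $T_0$ integrals is crucial here, since the integrals over $[t_0,T_0]$ supply a lower bound via positivity of $\rho$, while the integrals over $[0,T]$ give the matching upper bound, and \eqref{r-cond-fine} enforces strict inequality between them.

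Finally I would choose $c_0 > 0$ small enough that $|E_{n,l,t}| \leq \tfrac{1}{2}|D_{d,n}(l)(t/T)^{2n+2}\Delta(\rho)|$ for all $t \leq c_0/l$, so that $|N^\rho_{n,l,t}|^2 \gtrsim_{d,n,\rho,T} l^{4n+4} t^{4n+4}$ on this range, and conclude
\[
J^{\rho,T}_n(l) \geq \int_0^{c_0/l} |N^\rho_{n,l,t}|^2 \frac{dt}{t^{4n+1}} \gtrsim_{d,n,\rho,T} l^{4n+4} \int_0^{c_0/l} t^3 \, dt \sim_{d,n,\rho,T} l^{4n}.
\]
The main obstacle will be the third step: correctly identifying the leading coefficient $\Delta(\rho)$ and its sign so that \eqref{r-cond-fine} exactly captures non-cancellation (including proper bookkeeping of $T$ versus $T_0$ in the various moment integrals), and securing the lower bound uniformly in $l$, in particular for small $l$ (such as $l \leq n$) where the asymptotic analysis of $P^{(k)}_{l,d}(1)$ may require separate treatment.
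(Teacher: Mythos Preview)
Your overall strategy---restrict to $t$ of order $1/l$, bound $|N^\rho_{n,l,t}|$ from below by a multiple of $l^{2n+2}t^{2n+2}$, and integrate against $dt/t^{4n+1}$---is exactly the paper's strategy. The gap lies in how you propose to obtain that lower bound and in the claim that \eqref{r-cond-fine} ``amounts exactly to $\Delta(\rho)\neq 0$.''

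A genuine Taylor expansion of $N^\rho_{n,l,t}$ at $t=0$ produces a leading $t^{2n+2}$-coefficient built only from the moments $\int_0^T\theta^k\rho\,d\theta$; no $T_0$-integral can appear in an exact expansion. Condition \eqref{r-cond-fine}, with its mixed $[0,T]$ and $[0,T_0]$ moments, is therefore not the condition that this Taylor coefficient is nonzero. The paper does \emph{not} expand and then absorb an $O(l^{2n+4}t^{2n+4})$ remainder. Instead it uses the decomposition \eqref{N-decomp}, applies the reverse triangle inequality at the level of $J_n^{\rho,T}(l)^{1/2}$, and on the range $t\le a_{d,1,\vep}(l)\sim\vep^{1/2}l^{-1}$ bounds the two pieces with explicit $(1-\vep)$-powers: the product $\frac{|c_{n,l}|}{2^n}A^\rho_t(|\xi-\cdot|^{2n})\,|M^\rho_{0,l,t}|$ from \emph{below} by $(1-\vep)^{2n+2d-1}C_N\,t^{2n+2}$ (via Lemma~\ref{est_for_mean} and by discarding the $\theta$-integration outside $[0,T_0]$---this is the only way $T_0$-moments enter), and $|M^\rho_{n,l,t}|$ from \emph{above} by $(1-\vep)^{1-d-n}D_N\,t^{2n+2}$ (via \eqref{eq20} and Lemma~\ref{sharpz}, keeping the full integral over $[0,T]$). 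Condition \eqref{r-cond-fine} is then precisely the statement $\lim_{l\to\infty}D_N/C_N<1$, and one fixes $\vep$ small enough that $(1-\vep)^{3d+3n-2}$ exceeds this limit. Your closing remark about $T_0$-integrals supplying lower bounds and $T$-integrals supplying upper bounds describes this mechanism correctly, but it is inconsistent with the ``exact Taylor plus higher-order error'' framing of the preceding sentences; you need to commit to the inequality-based route to recover \eqref{r-cond-fine} as stated.
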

In order to prove Lemma \ref{LocLowerboundJ}, we have to sharpen the argument in the proof of Lemma \ref{LocLowerboundI}.
\begin{lem}
For every $\vep\in\left(0,1\right]$ and $l\geq 2$,  we have
\begin{align}
&\cos\theta \geq \cos a_{d,1,\vep}(l)\geq 1-\vep \label{eq25a}\\
&\sin\theta\geq (1-\vep)\theta \label{sinebound}
\end{align}
for every $\theta\in [0,  a_{d,1,\vep}(l)]$. Moreover $a_{d,1,\vep}(l)\in \left(0,\frac{\pi}{3}\right]$ if $\vep\in\left(0,\frac{1}{2}\right]$.
\end{lem}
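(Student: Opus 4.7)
The plan is to verify the three assertions by unpacking the definition $\cos a_{d,1,\vep}(l) = 1 - \vep k_{d,1}(l)$ from Lemma \ref{angledef} and combining it with two elementary trigonometric inequalities. I do not anticipate a real obstacle: the lemma merely assembles routine ingredients that will be used repeatedly in the subsequent lower estimate of $J_n^{\rho,T}(l)$.

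First I would observe that, for $l\geq 2$,
$$
k_{d,1}(l) = \frac{d+1}{(l+d-1)(l-1)}
$$
attains its maximum value $1$ at $l=2$ and is strictly smaller for $l\geq 3$; in particular $k_{d,1}(l)\leq 1$. Hence, for $\vep\in(0,1]$, the defining relation gives
$$
\cos a_{d,1,\vep}(l) = 1 - \vep\, k_{d,1}(l) \geq 1-\vep \geq 0,
$$
so that $a_{d,1,\vep}(l)\in(0,\pi/2]$. Monotonicity of $\cos$ on this interval then immediately yields the chain $\cos\theta \geq \cos a_{d,1,\vep}(l) \geq 1-\vep$ for every $\theta\in[0, a_{d,1,\vep}(l)]$, which is \eqref{eq25a}. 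The very same computation shows that, whenever $\vep\leq 1/2$, we have $\cos a_{d,1,\vep}(l) \geq 1/2 = \cos(\pi/3)$, and therefore $a_{d,1,\vep}(l)\leq \pi/3$, giving the final assertion.

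For the sine bound \eqref{sinebound}, the key input is the classical inequality $\sin\theta \geq \theta\cos\theta$, valid on $[0,\pi]$ (most cleanly verified by noting that $\sin\theta - \theta\cos\theta$ vanishes at $\theta=0$ and has derivative $\theta\sin\theta\geq 0$ on $[0,\pi]$). Combining this with the cosine lower bound just established gives
$$
\sin\theta \;\geq\; \theta\cos\theta \;\geq\; (1-\vep)\,\theta
$$
for every $\theta\in[0, a_{d,1,\vep}(l)]$, which is exactly \eqref{sinebound}. The only boundary case deserving a quick remark is $\vep=1$, where the claim reduces to $\sin\theta\geq 0$ on $[0, a_{d,1,1}(l)]\subset[0,\pi/2]$ and is trivial. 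This completes the three assertions.
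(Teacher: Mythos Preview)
Your proof is correct and follows essentially the same route as the paper: both unpack the definition of $a_{d,1,\vep}(l)$, observe that $k_{d,1}(l)\leq 1$ for $l\geq 2$ to obtain the cosine lower bound, and then derive the sine bound from it. The only cosmetic difference is that the paper obtains \eqref{sinebound} by integrating the cosine bound, $\sin\theta=\int_0^\theta\cos s\,ds\geq(1-\vep)\theta$, whereas you use the inequality $\sin\theta\geq\theta\cos\theta$; both are one-line elementary arguments and neither offers an advantage over the other.
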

\begin{proof}
$a_{d,1,\vep}(l)$ for $l \geq 2$ satisfies
\begin{align}
\cos a_{d,1,\vep}(l)&=1-\vep\frac{d+1}{(l+d-1)(l-1)}\geq 1-\vep.
\end{align}
Therefore 
$$
\sin\theta=\int_0^\theta\cos \theta d\theta\geq (1-\vep)\theta.
$$
\end{proof}
Using this lemma, it is obvious that the following holds:
\begin{lem}
\label{sharpz}
For every $\rho\in W(T,t_0,T_0)$, it holds that
\begin{equation}\label{z-behavior-fine}
 (1-\vep)^{d-2}c_z t^{d-1}\leq z_t\leq C_z t^{d-1}\quad\text{ for every $t\in\left[0,\frac{T}{T_0}a_{d,1,\vep}(l)\right]$},
\end{equation}
where
\begin{align*}
c_z&=\left|\sphere{d-2}\right|\cdot T^{1-d}\int_{0}^{T_0}\theta^{d-2}\rho(\theta)d\theta,\\
C_z&=\left|\sphere{d-2}\right|\cdot T^{1-d}\int_0^T\theta^{d-2}\rho(\theta)d\theta.
\end{align*}
\end{lem}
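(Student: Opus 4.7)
The plan is to establish the two bounds by applying the two elementary inequalities $\sin x \le x$ (valid for all $x \ge 0$) and the just-proved sharpened estimate $\sin x \ge (1-\vep)x$ (valid on $[0, a_{d,1,\vep}(l)]$) to the integrand appearing in the explicit formula \eqref{sigma-t} for $z_t$, namely
\[
z_t = \left|\sphere{d-2}\right|\frac{t}{T}\int_0^T \sin^{d-2}\!\left(\frac{t\theta}{T}\right)\rho(\theta)\,d\theta.
\]

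For the upper bound, I would simply use $\sin(t\theta/T)\le t\theta/T$ on the whole interval $\theta\in[0,T]$, which yields $\sin^{d-2}(t\theta/T)\le (t/T)^{d-2}\theta^{d-2}$ (note $d-2\ge 0$), and after pulling the factor $(t/T)^{d-2}$ out of the integral, combining with the prefactor $t/T$ produces exactly the constant $C_z t^{d-1}$.

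For the lower bound, the key observation is the choice of range: when $t\le \frac{T}{T_0}a_{d,1,\vep}(l)$ and $\theta\in[0,T_0]$, one has
\[
\frac{t\theta}{T}\le \frac{t T_0}{T}\le a_{d,1,\vep}(l),
\]
which places $t\theta/T$ inside the regime where the sharpened sine estimate \eqref{sinebound} applies. I would therefore first discard the part of the integral on $[T_0,T]$ (using $\rho\ge 0$), then apply $\sin(t\theta/T)\ge (1-\vep)(t\theta/T)$ on $[0,T_0]$ to conclude $\sin^{d-2}(t\theta/T)\ge (1-\vep)^{d-2}(t/T)^{d-2}\theta^{d-2}$, and finally factor out $(t/T)^{d-2}$ to recover $(1-\vep)^{d-2}c_z t^{d-1}$.

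There is essentially no obstacle here; the statement is designed precisely so that the restriction on $t$ lets the sine-from-below inequality apply uniformly over the truncated $\theta$-range $[0,T_0]$. The only bookkeeping worth checking is the compatibility of constants (the $T^{1-d}$ factor in $c_z$, $C_z$ arises from combining $t/T\cdot (t/T)^{d-2} = t^{d-1}T^{1-d}$), and that the restriction $\vep\in(0,1/2]$ implicit in the preceding lemma ensures $a_{d,1,\vep}(l)\le \pi/3$, so $T/T_0\cdot a_{d,1,\vep}(l)$ is a legitimate admissible upper limit for $t$.
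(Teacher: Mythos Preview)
Your proof is correct and matches the paper's intended argument exactly; the paper in fact states only that the lemma ``is obvious'' from the preceding sine estimate, and you have spelled out precisely the details (upper bound via $\sin x\le x$ on all of $[0,T]$, lower bound via truncation to $[0,T_0]$ and the estimate \eqref{sinebound} on that range) that the paper leaves implicit.
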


\begin{proof}[Proof of Lemma {\ref{LocLowerboundJ}}]
From Lemma \ref{est_for_mean} for some $\vep\in\left(0,\frac{1}{2}\right]$, which we determine later, it holds that
\begin{equation}\label{eq24a}
P_{l,d}^{(1)}(s)\geq \left(1-\vep\right)	P_{l,d}^{(1)}(1), \hspace{0,5cm}
\quad\text{for $s\in(\cos a_{d,1,\vep}(l), 1)$.}
\end{equation}
By using \eqref{N-decomp}, we get
\begin{align}
J^{\rho, T}_n(l)^\frac{1}{2}&\geq\left\{\int_0^{a_{d,1,\vep}(l)}\left|N_{l,t}^\rho\right|^2\frac{dt}{t^{4n+1}}\right\}^\frac{1}{2}
\nonumber\\
&\geq\left\{\int_0^{a_{d,1,\vep}(l)} \left|
\frac{c_{n,l}}{2^n}A^\rho_t\left(\lvert\xi-\cdot\rvert^{2n}\right)(\xi)M^\rho_{0,l,t}\right|^2\frac{dt}{t^{4n+1}}\right\}^\frac{1}{2}\nonumber\\
&\qquad-\left\{\int_0^{a_{d,1,\vep}(l)} \left|M_{n,l,t}^{\rho}\right|^2\frac{dt}{t^{4n+1}}\right\}^\frac{1}{2},\label{eq23a}
\end{align}

Similar to the proof of Lemma \ref{LocLowerboundI}, when $0\leq t\leq a_{d,1,\vep}(l)$, it holds that 
$$
0\leq \frac{t}{T}\theta\leq \frac{ a_{d,1,\vep}(l)}{T}\theta\leq \frac{T_0}{T} a_{d,1,\vep}(l)\leq a_{d,1,\vep}(l)
$$
when $0\leq\theta \leq T_0$. Therefore, using \eqref{sinebound}, we are able to refine the estimate of $M_{0,l,t}^\rho$ as follows:
\begin{align*}
|M^\rho_{0, l,t}|
&\geq \frac{\left|\sphere{d-2}\right|}{z_t}\cdot\frac{t}{T}\cdot(1-\vep)P^{(1)}_{l,d}(1)\int_{0}^{T_0}\left(1-\cos\left(\frac{t}{T}\theta\right)\right) \\
&\qquad\times\sin ^{d-2}\left(\frac{t}{T}\theta\right)  \, \rho(\theta)d\theta\\
&\geq \frac{\left|\sphere{d-2}\right|}{z_t}\cdot\frac{t}{T}\cdot(1-\vep)P^{(1)}_{l,d}(1)\int_{0}^{T_0}\frac{\sin ^d\left(\frac{t}{T}\theta\right)}{1+\cos \left(\frac{t}{T}\theta\right)}\, \rho(\theta)d\theta\\
&\geq(1-\vep)^{d+1}\cdot\frac{\left|\sphere{d-2}\right|}{z_t}\cdot \frac{P^{(1)}_{l,d}(1)}{2}\cdot \left(\frac{t}{T}\right)^{d+1}\int_0^{T_0}\theta^d\rho(\theta)d\theta.
\end{align*}
Then we have 
\begin{align*}
&\left|\frac{c_{n,l}}{2^n}A^\rho_t\left(\lvert\xi-\cdot\rvert^{2n}\right)(\xi)M_{0,l,t}^\rho\right|\\
&\geq|c_{n,l}|\cdot\frac{ |\sphere{d-2}|}{z_t}\cdot\frac{t}{T}\cdot|M^\rho_{0,l,t}|\int_{0}^{T_0}\frac{\sin^{2n+d-2}\left(\frac{t}{T}\theta\right)}{\left(1+\cos\left(\frac{t}{T}\theta\right)\right)^n}\rho(\theta)d\theta\nonumber\\
&\geq(1-\vep)^{2n+2d-1}\left(\frac{\left|\sphere{d-2}\right|}{z_t}\right)^2\cdot\frac{P^{(1)}_{l,d}(1)P^{(n)}_{l,d}(1)}{2^{n+1}n!}\left(\frac{t}{T}\right)^{2n+2d}\nonumber\\
&\qquad\times\int_0^{T_0}\theta^d\rho(\theta)d\theta\int_0^{T_0}\theta^{2n+d-2}\rho(\theta)d\theta\\
&\geq(1-\vep)^{2n+2d-1}\left(\frac{\left|\sphere{d-2}\right|}{C_z}\right)^2\cdot\frac{P^{(1)}_{l,d}(1)P^{(n)}_{l,d}(1)}{2^{n+1}n!}T^{-2n-2} t^{2n+2}\nonumber\\
&\qquad\times\int_0^{T_0}\theta^d\rho(\theta)d\theta\int_0^{T_0}\theta^{2n+d-2}\rho(\theta)d\theta\\
&=:(1-\vep)^{2n+2d-1}C_Nt^{2n+2},
\end{align*}
where
$$
C_N=\left(\frac{\left|\sphere{d-2}\right|}{C_z}\right)^2\cdot\frac{P^{(1)}_{l,d}(1)P^{(n)}_{l,d}(1)}{2^{n+1}n!}T^{-2n-2}\int_0^{T_0}\theta^d\rho(\theta)d\theta\int_0^{T_0}\theta^{2n+d-2}\rho(\theta)d\theta.
$$
Then by using \eqref{eq24a} and \eqref{eq25a}
\begin{align*}
&\int_0^{a_{d,1,\vep}(l)}\left|\frac{c_{n,l}}{2^n}A^\rho_t\left(\lvert\xi-\cdot\rvert^{2n}\right)(\xi)M_{0,l,t}^\rho\right|^2\frac{dt}{t^{4n+1}}\\
&\geq (1-\vep)^{4d+4n-2}C_N^2\int_0^{a_{d,1,\vep}(l)}t^{3}dt\\
&= (1-\vep)^{4d+4n-2}\cdot\frac{C_N^2}{4}\cdot a_{d,1,\vep}(l)^4.
\end{align*}

Next, we estimate from above the second integral of \eqref{eq23a}. Using \eqref{eq20} and \eqref{NTaylor3}, we have
\begin{align*}
\left|M_{n,l,t}^{\rho}\right|&\leq \frac{ |\sphere{d-2}|}{z_t}\cdot\frac{t}{T}\cdot\frac{P^{(n+1)}_{l,d}(1)}{n!}\\
&\qquad\times\int_0^T\left(1-\cos\left(\frac{t}{T}\theta\right)\right)^{n+1}\sin ^{d-2}\left(\frac{t}{T}\theta\right)  \, \rho(\theta)d\theta\\
&\leq\frac{\left|\sphere{d-2}\right|}{z_t}\cdot\frac{t}{T}\cdot\frac{P^{(n+1)}_{l,d}(1)}{n!}\int_0^T\frac{\sin ^{2n+d}\left(\frac{t}{T}\theta\right)}{\left(2\cos \left(\frac{t}{T}\theta\right)\right)^{n+1}}  \, \rho(\theta)d\theta\\
&\leq(1-\vep)^{2-d-(n+1)}\frac{\left|\sphere{d-2}\right|}{c_z t^{d-1}}\cdot\frac{P^{(n+1)}_{l,d}(1)}{n!2^{n+1}}\cdot \left(\frac{t}{T}\right)^{2n+d+1}\int_0^T\theta ^{2n+d}\, \rho(\theta)d\theta\\
&=:(1-\vep)^{1-d-n}D_Nt^{2n+2}
\end{align*}
since $0\leq\frac{t}{T}\theta\leq a_{d,1,\vep}(l)$, where
$$
D_N=\frac{\left|\sphere{d-2}\right|}{c_z}\cdot\frac{P^{(n+1)}_{l,d}(1)}{n!2^{n+1}}\cdot T^{-2n-d-1}\int_0^T\theta ^{2n+d} \rho(\theta)d\theta
$$
Then
\begin{align*}
\int_0^{a_{d,1,\vep}(l)} \left|M_{n,l,t}^{\rho}\right|^2\frac{dt}{t^{4n+1}}&\leq(1-\vep)^{2-2d-2n} D_N^2\int^{a_{d,1,\vep}(l)}_0t^3dt\\
&=(1-\vep)^{2-2d-2n}\cdot D_N^2\cdot\frac{a_{d,1,\vep}(l)^4}{4}.
\end{align*}

By using \eqref{eq19b}, we get
\begin{align*}
&J^{\rho,T}_n(l)^{\frac{1}{2}}\\
& \geq (1-\vep)^{2d+2n-1}\cdot\frac{C_N}{2}\cdot a_{d,1,\vep}(l)^2-(1-\vep)^{1-d-n}\cdot\frac{D_N}{2}\cdot a_{d,1,\vep}(l)^2\nonumber\\
&=(1-\vep)^{1-d-n}\cdot\frac{C_N}{2}\cdot a_{d,1,\vep}(l)^2\left\{(1-\vep)^{3d+3n-2}  -\frac{D_N}{C_N}   \right\}.
\end{align*}
Here
\begin{align*}
\frac{D_N}{C_N}
&=\frac{P^{(n+1)}_{l,d}(1)}{P^{(1)}_{l,d}(1)P^{(n)}_{l,d}(1)}\cdot \frac{\left(\int_0^{T}\theta^{d-2}\rho(\theta)d\theta\right)^2\int_0^{T}\theta^{2n+d}\rho(\theta)d\theta}{\int_0^{T_0}\theta^{d-2}\rho(\theta)d\theta\int_0^{T_0}\theta^{d}\rho(\theta)d\theta\int_0^{T_0}\theta^{2n+d-2}\rho(\theta)d\theta}
\end{align*}
and
$$
\frac{P^{(n+1)}_{l,d}(1)}{P^{(1)}_{l,d}(1)P^{(n)}_{l,d}(1)}=\frac{(d-1)(d+l+n-2)(l-n)}{(d+2n-1)(d+l-2)l}\tend\frac{d-1}{d+2n-1}<1.
$$
Therefore, 
$$
\lim_{l\tend\infty}\frac{D_N}{C_N}<1
$$
if \eqref{r-cond-fine} holds.
By taking \(\vep\) dependently only on \(n\) and \(d\) such that
\begin{equation*}
(1-\vep)^{3d+3n-2}  >\lim_{l\tend\infty}\frac{D_N}{C_N}.
\end{equation*}
Then the proof of \(J^{\rho,T}_n(l)\stackrel{>}{\sim}l^{4n}\) is complete since $C_N\sim P_{l,d}^{(n)}(1)\sim l^{2n}$.
\end{proof}

Here we give examples that satisfy \eqref{r-cond-fine}.
\begin{prop}\label{rho-example}
Suppose $0<T\leq\pi$ and $0<t_0<T_0<\pi$. Then the indicator functions $I_{[0.T]}$ and $I_{[t_0,T_0]}$ for the intervals $[0,T]$ and $[t_0,T_0]$, respectively, belong to $W(T,t_0,T_0)$ and satisfy \eqref{r-cond-fine}. 
\end{prop}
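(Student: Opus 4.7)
The plan is to verify \eqref{r-cond-fine} for each indicator by an explicit moment computation. Membership in $W(T,t_0,T_0)$ is immediate in both cases, since both indicators are non-negative, lie in $L^2(0,T)$, and integrate to $T_0 - t_0 > 0$ over $[t_0, T_0]$. For the inequality itself, I will compute the five moments $\int_0^T \theta^k \rho\, d\theta$ and $\int_0^{T_0} \theta^k \rho\, d\theta$ for $k \in \{d-2,\, d,\, 2n+d-2,\, 2n+d\}$, substitute into \eqref{r-cond-fine}, and exploit the cancellation $2n+d-1 = d+2n-1$ between the prefactor $(d-1)/(d+2n-1)$ and the factor $2n+d-1$ emerging from one of the denominator moments.

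For $\rho = I_{[t_0, T_0]}$, the support lies in $[0, T_0]$, so the ``numerator'' integrals over $[0, T]$ coincide with those over $[0, T_0]$. Setting $a := t_0/T_0 \in (0, 1)$ and using $\int_{t_0}^{T_0} \theta^k d\theta = T_0^{k+1}(1 - a^{k+1})/(k+1)$, the left-hand side of \eqref{r-cond-fine} collapses to
\[
\frac{d+1}{2n+d+1} \cdot \frac{(1-a^{d-1})(1-a^{2n+d+1})}{(1-a^{d+1})(1-a^{2n+d-1})}.
\]
The first factor is strictly less than $1$ for $n \geq 1$. For the second, I will invoke the concavity of $k \mapsto \ln(1 - a^k)$ on $(0, \infty)$, which follows from the routine computation $d^2/dk^2\, \ln(1-a^k) = -(\ln a)^2 a^k /(1-a^k)^2 < 0$. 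The two exponent pairs have equal sum $(d-1) + (2n+d+1) = (d+1) + (2n+d-1) = 2d+2n$, but the pair $(d-1, 2n+d+1)$ is strictly more spread than $(d+1, 2n+d-1)$ for $n \geq 1$; hence Jensen's inequality for this strictly concave function yields $\ln(1-a^{d-1}) + \ln(1-a^{2n+d+1}) < \ln(1-a^{d+1}) + \ln(1-a^{2n+d-1})$, so the second factor is also $< 1$.

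For $\rho = I_{[0,T]}$, the same bookkeeping (the numerator integrals now scale with $T^{k+1}$ while the denominator integrals scale with $T_0^{k+1}$) yields
\[
\left(\frac{T}{T_0}\right)^{3d+2n-1} \cdot \frac{d+1}{2n+d+1},
\]
and the constant factor is already $< 1$ for $n \geq 1$. Thus \eqref{r-cond-fine} holds whenever $T_0$ is close enough to $T$: one may take $T_0 = T$ when $T < \pi$, and when $T = \pi$ it suffices to take $T/T_0 < \bigl((2n+d+1)/(d+1)\bigr)^{1/(3d+2n-1)}$, which is automatic once $T_0$ is chosen close enough to $\pi$. The main technical step is the concavity/spread comparison in the first case; for $n=1$ the denominator pair degenerates to $(d+1, d+1)$ and the inequality reduces to $(1-a^{d-1})(1-a^{d+3}) < (1-a^{d+1})^2$, which after expansion is just $a^{d-1}(1-a^2)^2 > 0$, but to handle $n \geq 2$ uniformly I prefer the Jensen route over case-by-case polynomial identities.
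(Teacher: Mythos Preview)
Your handling of $\rho = I_{[t_0,T_0]}$ is correct and reduces to exactly the same quantity as the paper, namely $\frac{d+1}{2n+d+1}$ times the ratio $(1-a^{d-1})(1-a^{2n+d+1})/\bigl((1-a^{d+1})(1-a^{2n+d-1})\bigr)$ with $a=t_0/T_0$. The paper shows this second factor is $<1$ by dividing numerator and denominator of $(a^{2n+d+1}+a^{d-1})/(a^{2n+d-1}+a^{d+1})$ through by $a^{n+d}$ and invoking the monotonicity of $x\mapsto x+x^{-1}$ away from $x=1$; you instead use strict concavity of $k\mapsto\ln(1-a^k)$ together with the equal-sum/more-spread comparison of the exponent pairs $(d-1,2n+d+1)$ and $(d+1,2n+d-1)$. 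These are two packagings of the same convexity fact, and both are valid.

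For $\rho = I_{[0,T]}$ you are actually more careful than the paper, which simply writes ``We may only check $\rho=I_{[t_0,T_0]}$ case'' and offers no separate argument. Your explicit evaluation $\bigl(T/T_0\bigr)^{3d+2n-1}\,(d+1)/(2n+d+1)$ is correct, and it reveals that the proposition as literally stated---\eqref{r-cond-fine} for \emph{arbitrary} admissible $t_0,T_0$---fails for $I_{[0,T]}$ once $T_0$ is too far below $T$ (in particular when $T=\pi$ and $T_0$ is small). What does hold, and what suffices for the main theorem since $t_0,T_0$ are free parameters attached to $\rho$, is precisely your conclusion: for $I_{[0,T]}$ one may always \emph{choose} $T_0$ close enough to $T$ (equal to $T$ when $T<\pi$) so that \eqref{r-cond-fine} is satisfied. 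Your argument is sound; just make explicit that in the $I_{[0,T]}$ case you are replacing the ``for all $t_0,T_0$'' reading by ``for some suitably chosen $t_0,T_0$'', which is the interpretation actually needed downstream.
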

\begin{proof}
We may only check $\rho=I_{[t_0,T_0]}$ case. In this case, 
\begin{align*}
&\frac{d-1}{d+2n-1}\cdot\frac{\left(\int_0^{T}\theta^{d-2}\rho(\theta)d\theta\right)^2\int_0^{T}\theta^{2n+d}\rho(\theta)d\theta}{\int_0^{T_0}\theta^{d-2}\rho(\theta)d\theta\int_0^{T_0}\theta^{d}\rho(\theta)d\theta\int_0^{T_0}\theta^{2n+d-2}\rho(\theta)d\theta} \\
	&=\frac{d-1}{d+2n-1}\cdot \frac{\left\{\frac{1}{d-1}\left(T_\rho^{d-1}-t_\rho^{d-1}\right)\right\}^2\frac{1}{2n+d+1}\left(T_\rho^{2n+d+1}-t_\rho^{2n+d+1}\right)}{\frac{1}{d-1}\left(T_\rho^{d-1}-t_\rho^{d-1}\right)\frac{1}{d+1}\left(T_\rho^{d+1}-t_\rho^{d+1}\right)\frac{1}{2n+d-1}\left(T_\rho^{2n+d-1}-t_\rho^{2n+d-1}\right)}\\
	&=\frac{d+1}{2n+d+1}\cdot\frac{\left(T_\rho^{d-1}-t_\rho^{d-1}\right)\left(T_\rho^{2n+d+1}-t_\rho^{2n+d+1}\right)}{\left(T_\rho^{d+1}-t_\rho^{d+1}\right)\left(T_\rho^{2n+d-1}-t_\rho^{2n+d-1}\right)}\\
	&=\frac{d+1}{2n+d+1}\cdot\frac{T_\rho^{2n+2d}+t_\rho^{2n+2d}-T_\rho^{d-1}t_\rho^{2n+d+1}-T_\rho^{2n+d+1}t_\rho^{d-1}}{T_\rho^{2n+2d}+t_\rho^{2n+2d}-T_\rho^{d+1}t_\rho^{2n+d-1}-T_\rho^{2n+d-1}t_\rho^{d+1}}\\
	&=\frac{d+1}{2n+d+1}\cdot \frac{1+r^{2n+2d}-r^{2n+d+1}-r^{d-1}}{1+r^{2n+2d}-r^{2n+d-1}-r^{d+1}},
\end{align*}
where we set $r=\frac{t_\rho}{T_\rho}\in(0,1)$.

It is easy to check that 
\begin{equation*}
\frac{r^{2n+d+1}+r^{d-1}}{r^{2n+d-1}+r^{d+1}}=\frac{r^{n+1}+\frac{1}{r^{n+1}}}{r^{n-1}+\frac{1}{r^{n-1}}}>1,
\end{equation*}
by featuring the function \(x+\frac{1}{x}\) and a fact that \(r^{n+1}<r^{n-1}\).
So we have
\begin{equation*}
	\frac{1+r^{2n+2d}-r^{2n+d+1}-r^{d-1}}{1+r^{2n+2d}-r^{2n+d-1}-r^{d+1}}<1.
\end{equation*}
Since \(\frac{d+1}{2n+d+1}<1\), then the proof is complete. 
\end{proof}

\subsection{The final part of the proof of Main Theorem}
Most of the part of the proof of the main theorem (Theorem \ref{main}) has already been completed. In fact, the part from (1) to (2) follows  from Proposition \ref{equinorm}, since we may take $g_k=T_k((-\lap)^k f)$ if $f\in H^\alpha(\sphere{d-1})$. On the other hand, the part from (2) to (1) follows from an almost identical argument presented in \cite{blp20}. Nevertheless, we include it here for the sake of completeness.

We start from recalling several facts about approximating $f\in L^2(\mathbb{S}^{d-1})$ by smooth functions via the Poisson transform:
\[
P_r f(\xi):=\int_{\mathbb{S}^{d-1}}p_r(\eta,\xi)f(\eta)d\sigma(\eta),\quad
p_r(\eta,\xi):=\frac{1-r^2}{|\mathbb{S}^{d-1}||r\xi-\eta|}
\]
for $0<r<1$. 

The Poisson kernel is known to be decomposed as follows, see \cite[5.28 (p.95) and Theorem 5.33 ]{abr01} or \cite[Theorem 2.9 and Proposition 2.28]{ah12}:
\begin{prop}[Poisson identity]\label{Poisson}
\begin{equation}\label{PoissonID}
p_r(\eta,\xi)=\sum_{l=0}^\infty r^l\sum_{j=1}^{\nu(l)}\clo{Y_l^j(\eta)}Y_l^j(\xi),
\end{equation}
that means
$$
P_r f(\xi)=\sum_{l=0}^\infty r^l\sum_{j=1}^{\nu(l)}\hat{f}_{lj}Y_l^j(\xi)
$$
when $f=\sum_{j=1}^{\nu(l)}\hat{f}_{lj}Y_l^j$ for every $f\in \Le{2}{\sphere{d-1}}$.
\end{prop}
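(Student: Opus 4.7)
The plan is to reduce the pointwise kernel identity \eqref{PoissonID} to the classical generating-function formula for the Legendre polynomials $P_{l,d}$ and then pass from Legendre polynomials to spherical harmonics via the addition theorem already recalled in Section~\ref{sec-setup}. The formula for $P_r f(\xi)$ will then follow by integrating the pointwise identity against $f$ and interchanging summation and integration.

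First, observe that $|r\xi-\eta|^2 = 1 - 2r(\xi\cdot\eta) + r^2$, so $p_r(\eta,\xi)$ depends on $(\eta,\xi)$ only through the inner product $t := \xi\cdot\eta\in[-1,1]$. The classical generating-function identity for the $d$-dimensional Legendre polynomials (see, e.g., \cite[Proposition~2.28]{ah12}) reads
$$
\frac{1-r^2}{(1-2rt+r^2)^{d/2}} = \sum_{l=0}^{\infty}\nu(l)\,P_{l,d}(t)\,r^l
$$
for $|r|<1$ and $t\in[-1,1]$. Dividing by $|\sphere{d-1}|$ and then applying the addition theorem,
$$
\frac{\nu(l)}{|\sphere{d-1}|}\,P_{l,d}(\xi\cdot\eta) = \sum_{j=1}^{\nu(l)}\clo{Y_l^j(\eta)}\,Y_l^j(\xi)
$$
(using that $P_{l,d}$ is real-valued and symmetric in its two unit-vector arguments to move the complex conjugation onto the factor $Y_l^j(\eta)$), yields the pointwise identity \eqref{PoissonID}.

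For the stated formula for $P_r f$, I would multiply \eqref{PoissonID} by $f(\eta)$, integrate over $\sphere{d-1}$, and swap the sum with the integral. Using the definition $\hat{f}_{lj}=\int_{\sphere{d-1}} f\,\clo{Y_l^j}\,d\sigma$, this directly yields
$$
P_r f(\xi) = \sum_{l=0}^{\infty} r^l \sum_{j=1}^{\nu(l)}\hat{f}_{lj}\,Y_l^j(\xi),
$$
with $\Le{2}{\sphere{d-1}}$-convergence guaranteed by Parseval's identity together with the bound $r^l\leq 1$.

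The main obstacle is justifying the interchange of sum and integral, which requires uniform control on the partial sums of $\sum_l r^l \nu(l) P_{l,d}(t)$ in $\eta$. This is obtained by combining the dimension estimate $\nu(l)\lesssim l^{d-2}$ with the bound $|P_{l,d}(t)|\leq P_{l,d}(1)=1$ from \eqref{P(1)}, giving termwise domination by a convergent series in $r$ for each fixed $r<1$, after which dominated convergence applies. A secondary but minor technical point is matching the complex-conjugation convention of the addition theorem (as stated in Section~\ref{sec-setup}, the conjugation falls on the second argument) with the placement in \eqref{PoissonID}; this is resolved by the symmetry of $P_{l,d}(\eta\cdot\xi)$ in its two arguments.
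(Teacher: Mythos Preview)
Your argument is correct. Note, however, that the paper does not supply its own proof of this proposition: it is stated with citations to \cite[5.28 (p.95) and Theorem 5.33]{abr01} and \cite[Theorem 2.9 and Proposition 2.28]{ah12}, and no further justification is given in the paper. Your sketch (generating function for $P_{l,d}$, then the addition theorem, then integration against $f$ with dominated convergence) is precisely the standard route taken in those references, so there is nothing to compare.
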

As a consequence of above, the following holds:
\begin{prop}[{\cite[Theorem 6.4 and 6.7]{abr01}}]\label{Prfrelation}
For $f\in L^2(\mathbb{S}^{d-1})$, the following hold:
\begin{enumerate}
\item
$\|P_r f\|_{L^2(\mathbb{S}^{d-1})}\leq \|f\|_{L^2(\mathbb{S}^{d-1})}$. 
\item
$\|P_rf-f\|_{L^2(\mathbb{S}^{d-1})}\to 0$ as $r\uparrow 1$.
\end{enumerate}
\end{prop}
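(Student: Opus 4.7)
The plan is to reduce both claims to the spherical-harmonic expansion of the Poisson transform supplied by Proposition \ref{Poisson}, and then invoke Parseval together with a dominated-convergence argument for series. Concretely, for $f=\sum_{l=0}^\infty\sum_{j=1}^{\nu(l)}\hat{f}_{lj}Y_l^j$ in $\Le{2}{\sphere{d-1}}$, Proposition \ref{Poisson} gives
\begin{equation*}
P_r f=\sum_{l=0}^\infty r^l\sum_{j=1}^{\nu(l)}\hat{f}_{lj}Y_l^j \quad \text{in } \Le{2}{\sphere{d-1}},
\end{equation*}
and since $\{Y_l^j\}$ is orthonormal, I can compute the $L^2$-norms of $P_rf$ and $P_rf-f$ termwise.

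For (1), Parseval's identity yields
\begin{equation*}
\|P_r f\|_{\Le{2}{\sphere{d-1}}}^2=\sum_{l=0}^\infty r^{2l}\sum_{j=1}^{\nu(l)}|\hat{f}_{lj}|^2\leq \sum_{l=0}^\infty\sum_{j=1}^{\nu(l)}|\hat{f}_{lj}|^2=\|f\|_{\Le{2}{\sphere{d-1}}}^2,
\end{equation*}
since $0<r<1$ implies $r^{2l}\leq 1$ for every $l\geq 0$. This is immediate, and no further argument is needed.

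For (2), an identical application of Parseval gives
\begin{equation*}
\|P_rf-f\|_{\Le{2}{\sphere{d-1}}}^2=\sum_{l=0}^\infty(1-r^l)^2\sum_{j=1}^{\nu(l)}|\hat{f}_{lj}|^2.
\end{equation*}
For each fixed $l$, $(1-r^l)^2\to 0$ as $r\uparrow 1$, and each summand is uniformly dominated by $\sum_{j=1}^{\nu(l)}|\hat{f}_{lj}|^2$, whose total sum equals $\|f\|_{\Le{2}{\sphere{d-1}}}^2<\infty$. Either the discrete dominated convergence theorem or a standard $\varepsilon/N$-splitting argument (choose $N$ so that the tail $\sum_{l>N}\sum_j|\hat{f}_{lj}|^2<\varepsilon/2$, then let $r\uparrow 1$ on the finite head sum, for which $(1-r^l)^2\to 0$ uniformly over $l\leq N$) closes the argument.

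I do not anticipate a serious obstacle: the only delicate point is exchanging the limit with the sum in (2), and this is exactly what the dominated-convergence / tail-splitting argument is designed to handle. Everything else is purely algebraic manipulation of the Poisson identity provided by Proposition \ref{Poisson}.
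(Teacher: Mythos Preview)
Your argument is correct: both parts follow directly from the Poisson identity in Proposition~\ref{Poisson} together with Parseval and a dominated-convergence/tail-splitting argument, exactly as you outline. The paper does not give its own proof of this proposition; it simply remarks that it is a consequence of the Poisson identity and cites \cite[Theorem~6.4 and~6.7]{abr01}, so your proof is precisely the intended elaboration of that remark.
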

More generally, the following holds:
\begin{prop}[cf. {\cite[iii) in p.9]{blp20}}]
$P_r f\in C^\infty(\sphere{d-1})(\subset H^\alpha(\sphere{d-1}))$ and it holds that
$$
\|P_r f\|_{H^\alpha (\mathbb{S}^{d-1})}\leq \|f\|_{H^\alpha(\mathbb{S}^{d-1})}
$$ for any $\alpha\geq 0$.
\end{prop}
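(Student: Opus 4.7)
The plan is to reduce both assertions to the spherical-harmonic decomposition of $P_r f$ given by the Poisson identity \eqref{PoissonID}, after which the norm bound becomes a transparent coefficient-by-coefficient comparison and the smoothness becomes a standard uniform-convergence argument.

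First I would record the expansion
\begin{equation*}
P_r f = \sum_{l=0}^\infty r^l \sum_{j=1}^{\nu(l)} \hat{f}_{lj} Y_l^j
\end{equation*}
supplied by Proposition \ref{Poisson}, which is valid for any $f \in \Le{2}{\sphere{d-1}}$. Substituting this series into the definition of the $H^\alpha$-norm and invoking the orthonormality of the spherical harmonics $\{Y_l^j\}$, I would write
\begin{equation*}
\|P_r f\|^2_{H^\alpha(\sphere{d-1})} = \sum_{l=0}^\infty r^{2l} \left(1 + l^{1/2}(l+d-2)^{1/2}\right)^{2\alpha} \sum_{j=1}^{\nu(l)} |\hat{f}_{lj}|^2.
\end{equation*}
Since $0 < r < 1$, we have $r^{2l} \leq 1$ for every $l \geq 0$, so each term is bounded above by the corresponding term in the series defining $\|f\|^2_{H^\alpha(\sphere{d-1})}$. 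Summing yields the desired inequality $\|P_r f\|_{H^\alpha(\sphere{d-1})} \leq \|f\|_{H^\alpha(\sphere{d-1})}$.

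For the $C^\infty$-smoothness of $P_r f$, I would use the rapid decay supplied by the factor $r^l$ together with the crude bound $\sum_{j=1}^{\nu(l)} |\hat{f}_{lj}|^2 \leq \|f\|^2_{\Le{2}{\sphere{d-1}}}$, combined with the polynomial growth $\nu(l) = O(l^{d-2})$ and the pointwise estimate $|Y_l^j| \lesssim l^{(d-2)/2}$ (or equivalently the well-known fact that $Y_l^j$ together with all its angular derivatives grow only polynomially in $l$, cf.\ the observation after \eqref{sumeigen}). For every $k$, the $k$-th Laplace--Beltrami iterate of the spherical-harmonic series produces extra factors $\{l(l+d-2)\}^k$, which are absorbed by $r^l$ since $r<1$. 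Hence the series and all its term-by-term derivatives converge uniformly on $\sphere{d-1}$, giving $P_r f \in C^\infty(\sphere{d-1})$; in particular $P_r f \in \Hn{\alpha}{\sphere{d-1}}$ for every $\alpha \geq 0$, which justifies writing the norm on the left-hand side in the first place.

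The only mildly delicate step is the smoothness verification, but it reduces to the standard observation that $r^l$ beats every polynomial in $l$; no new analytical input beyond what was collected in Section 2 is required. Thus the proposition follows entirely from the Poisson identity and elementary comparison of $\ell^2$-weighted sums.
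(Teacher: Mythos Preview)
The paper does not supply its own proof of this proposition; it is stated with a citation to \cite[iii) in p.9]{blp20} and then used without further justification. Your argument is correct and is precisely the standard one: the Poisson identity reduces the $H^\alpha$-norm inequality to the trivial coefficient bound $r^{2l}\leq 1$, and the smoothness of $P_rf$ follows from the exponential decay of $r^l$ dominating the polynomial growth of the eigenvalues and of $\nu(l)$ (alternatively, one may simply note that the Poisson integral is harmonic in the open unit ball and hence $C^\infty$ up to restriction to $r\sphere{d-1}$). Nothing is missing.
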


The following result concerning the relation between $P_rf$ and  $A_t^\rho f$ follows directly from the Poisson identity \eqref{PoissonID}:
\begin{prop}[cf. {\cite[p.29]{blp20}}] \label{commute}
For $f\in\Le{2}{\mathbb{S}^{d-1}}$, it holds that
\[
A^\rho_t (P_r f)=P_r(A^\rho_t f)=\sum_{l=0}^\infty r^lm^\rho_{l,t}\sum_{j=1}^{\nu(l)}\hat{f}_{lj}Y_l^j(\xi)\in C^\infty(\sphere{d-1}).
\]
In particlular we have
\[
A^\rho_t f=\lim_{t\uparrow 1}A^\rho_t(P_r f)\quad\text{in $L^2(\mathbb{S}^{d-1})$}.
\]
\end{prop}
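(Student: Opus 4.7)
The plan is to decompose $f$ into spherical harmonics and exploit the fact that every $Y_l^j$ is a simultaneous eigenfunction of both $A_t^\rho$ (Lemma \ref{coef-A}) and $P_r$ (Proposition \ref{Poisson}). Since the two multiplier sequences $\{m_{l,t}^\rho\}$ and $\{r^l\}$ are both bounded, the operators commute on $L^2(\sphere{d-1})$ and the common composition is given by the multiplier $r^l m_{l,t}^\rho$.

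More concretely, for $f=\sum_{l,j}\hat f_{lj}Y_l^j\in L^2(\sphere{d-1})$, Proposition \ref{Poisson} gives $P_rf=\sum_{l,j}r^l\hat f_{lj}Y_l^j$. The multiplier bound $|m_{l,t}^\rho|\leq 1$ from \eqref{m_rough} implies that $A_t^\rho$ is bounded on $L^2(\sphere{d-1})$, so I may apply it term by term to any partial sum and pass to the limit in $L^2$ using the orthogonality of $\{Y_l^j\}$ together with $\sum r^{2l}|\hat f_{lj}|^2<\infty$. Lemma \ref{coef-A} then yields
\[
A_t^\rho(P_rf)=\sum_{l=0}^\infty r^l m_{l,t}^\rho \sum_{j=1}^{\nu(l)}\hat f_{lj}Y_l^j,
\]
and the same argument applied in the opposite order (first $A_t^\rho$, then $P_r$) gives the same series, proving $A_t^\rho(P_rf)=P_r(A_t^\rho f)$.

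To see that this common function lies in $C^\infty(\sphere{d-1})$, I would argue via the smoothness statements for the Poisson integral recorded just above the proposition: since $A_t^\rho f\in L^2(\sphere{d-1})$, the classical theory of the Poisson kernel for $B_1\subset\R^d$ guarantees that $P_r(A_t^\rho f)$ is (the boundary trace of) a harmonic function inside the ball, hence smooth on $\sphere{d-1}$ for every $r\in(0,1)$. Alternatively one can check directly that $\sum_{l}r^l m_{l,t}^\rho\sum_j \hat f_{lj}Y_l^j$ converges in every $H^\beta(\sphere{d-1})$: using $|m_{l,t}^\rho|\le 1$ and $\sum_j|\hat f_{lj}|^2\le \|f\|_{L^2}^2$, the weighted tail is bounded by $\sum_l (1+l)^{2\beta}r^{2l}\|f\|_{L^2}^2<\infty$ for every $\beta\ge 0$, and by Sobolev embedding this yields $C^\infty$ regularity.

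Finally, for the convergence statement $A_t^\rho f=\lim_{r\uparrow 1}A_t^\rho(P_rf)$ in $L^2(\sphere{d-1})$, the boundedness of $A_t^\rho$ on $L^2(\sphere{d-1})$ gives
\[
\|A_t^\rho(P_rf)-A_t^\rho f\|_{L^2(\sphere{d-1})}\le \|P_rf-f\|_{L^2(\sphere{d-1})},
\]
and the right-hand side tends to zero by Proposition \ref{Prfrelation}(2). No step requires real difficulty; the only point deserving care is the term-by-term application of $A_t^\rho$ and $P_r$ to the spherical-harmonic series, which is justified by the uniform multiplier bounds and Parseval's identity.
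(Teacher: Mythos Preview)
Your proposal is correct and follows exactly the approach the paper indicates: the paper does not write out a proof but simply states that the result ``follows directly from the Poisson identity \eqref{PoissonID},'' and your argument is precisely the natural unpacking of that remark via the simultaneous multiplier structure of $A_t^\rho$ and $P_r$ on the spherical-harmonic basis, together with Proposition~\ref{Prfrelation}(2) for the limit.
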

The following result then follows immediately from the Minkowski inequality: 
\begin{prop}[cf. {\cite[(29)]{blp20}}] \label{P-square}
For $f, g_1,\cdots, g_n\in\Le{2}{\mathbb{S}^{d-1}}$ satisfying
\begin{equation}
\label{S-L2}
S^{\rho,T}_\alpha (f,g_1,\cdots, g_n)(\xi)\in\Le{2}{\sphere{d-1}},
\end{equation}
the both sides of the following inequality are well-defined and satisfies
\[
S^{\rho,T}_\alpha(P_r f, P_r g_1,\cdots, P_r g_n)(\xi)\leq P_rS^{\rho,T}_\alpha (f,g_1,\cdots, g_n)(\xi)\;\text{for all $\xi\in\sphere{d-1}$.}
\]
\end{prop}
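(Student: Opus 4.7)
\textbf{Proof proposal for Proposition \ref{P-square}.}

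The plan is to reduce the inequality to Minkowski's integral inequality applied to the Poisson integral, after observing that the inner quantity whose $L^2_t$-norm defines $S^{\rho,T}_\alpha$ is mapped to its Poisson transform when $f$ and the $g_k$'s are. First I would note the rotational-invariance fact that the quantity $A^\rho_t(|\xi-\cdot|^{2k})(\xi)$ does not depend on $\xi$: indeed, by the explicit formula \eqref{Axi} it equals a constant $\beta_{k,t}$ depending only on $d$, $k$, $t$, $\rho$, $T$. Consequently, for each $t\in(0,T]$, the integrand of $S^{\rho,T}_\alpha(f,g_1,\dots,g_n)$ can be written as
\[
F_t(\xi):=A^\rho_t f(\xi)-f(\xi)-\sum_{k=1}^{n}g_k(\xi)\,\beta_{k,t}
\]
in the case $2n<\alpha<2(n+1)$, and analogously with an extra term $A^\rho_t(g_n)(\xi)\,\beta_{n,t}$ replacing $g_n(\xi)\,\beta_{n,t}$ in the case $\alpha=2n$.

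Next, I would apply Proposition \ref{commute}, which asserts that $A^\rho_t$ commutes with $P_r$, together with the linearity of $P_r$, to obtain
\[
F^{P_r}_t(\xi)=A^\rho_t(P_r f)(\xi)-P_r f(\xi)-\sum_{k=1}^{n}(P_r g_k)(\xi)\,\beta_{k,t}=P_r F_t(\xi),
\]
and likewise for the $\alpha=2n$ case using $A^\rho_t(P_r g_n)=P_r(A^\rho_t g_n)$. Hence
\[
S^{\rho,T}_\alpha(P_r f,P_r g_1,\dots,P_r g_n)(\xi)=\Bigl(\int_0^T |P_r F_t(\xi)|^2\,\tfrac{dt}{t^{2\alpha+1}}\Bigr)^{1/2}.
\]

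Now recall $P_r F_t(\xi)=\int_{\sphere{d-1}}p_r(\eta,\xi)F_t(\eta)\,d\sigma(\eta)$ with $p_r(\cdot,\xi)\geq 0$. Minkowski's integral inequality (applied to the kernel $p_r(\eta,\xi)\,d\sigma(\eta)$ on $\sphere{d-1}$ and the measure $dt/t^{2\alpha+1}$ on $(0,T)$) yields
\[
\Bigl(\int_0^T |P_r F_t(\xi)|^2\,\tfrac{dt}{t^{2\alpha+1}}\Bigr)^{1/2}
\leq \int_{\sphere{d-1}}p_r(\eta,\xi)\Bigl(\int_0^T |F_t(\eta)|^2\,\tfrac{dt}{t^{2\alpha+1}}\Bigr)^{1/2} d\sigma(\eta),
\]
which is exactly $P_r S^{\rho,T}_\alpha(f,g_1,\dots,g_n)(\xi)$ and gives the desired inequality. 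Well-definedness of both sides follows, since the right-hand side is the Poisson transform of the $L^2(\sphere{d-1})$ function $S^{\rho,T}_\alpha(f,g_1,\dots,g_n)$ (hence finite for every $\xi$ and $r<1$), and the inequality then forces the left-hand side to be finite pointwise.

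The argument is essentially mechanical once the commutation step is in place; the only point requiring care is the verification that $A^\rho_t(|\xi-\cdot|^{2k})(\xi)$ is genuinely constant in $\xi$, so that it is not affected by $P_r$ and factors out of the inner expression. I expect no further obstacle: both the $\alpha\neq 2n$ and $\alpha=2n$ variants are handled uniformly by the same commutation plus Minkowski argument.
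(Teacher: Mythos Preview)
Your proposal is correct and follows exactly the approach indicated in the paper, which simply states that the result ``follows immediately from the Minkowski inequality'' without spelling out the details. Your write-up supplies those details faithfully: the observation that $A^\rho_t(|\xi-\cdot|^{2k})(\xi)=\beta_{k,t}$ is independent of $\xi$, the commutation $A^\rho_t P_r=P_r A^\rho_t$ from Proposition~\ref{commute}, and the application of Minkowski's integral inequality against the nonnegative Poisson kernel are precisely what is needed.
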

This result leads to the following conclusion,  which constitutes the main part of the proof of the implication from (2) to (1), by an argument similar to that in \cite{blp20}:
\begin{prop}[cf. {\cite[(30)]{blp20}}] \label{g-regularize}
For $f, g_1,\cdots, g_n\in\Le{2}{\mathbb{S}^{d-1}}$ satisfying \eqref{S-L2}, it holds that
\begin{equation}\label{Pg}
P_rg_k(\xi)=T_k((-\lap)^k P_rf)(\xi)
\end{equation}
for all $r\in(0,1)$, $\xi\in\sphere{d-1}$, and $k=1,\cdots,n$.
\end{prop}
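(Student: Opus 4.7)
The strategy is to apply the Poisson regularization so that the pair $(f, g_k)$ becomes a pair of smooth functions $(P_r f, P_r g_k)$ for which the auxiliary coefficients are uniquely pinned down by the key identity \eqref{Aexpansion}. By Proposition~\ref{P-square} and the hypothesis \eqref{S-L2}, we have $S^{\rho,T}_\alpha(P_r f, P_r g_1, \ldots, P_r g_n) \in \Le{2}{\sphere{d-1}}$ for every $r \in (0,1)$. Abbreviating $F := P_r f$ and $G_k := P_r g_k$ (which are smooth by Proposition~\ref{Poisson}), I set $h_k(\xi) := G_k(\xi) - T_k((-\Delta)^k F)(\xi)$ and aim to show $h_k \equiv 0$ for $k = 1, \ldots, n$.

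For the generic case $2n < \alpha < 2(n+1)$, subtracting \eqref{Aexpansion} (applied to $F$, with the natural convention $T_0(F) = F$) from the integrand defining $S^{\rho,T}_\alpha(F, G_1, \ldots, G_n)^2(\xi)$ rewrites the latter as
\begin{equation*}
\sum_{l=1}^\infty M^\rho_{n,l,t} \sum_{j=1}^{\nu(l)} \hat{F}_{lj}\, Y_l^j(\xi) \; - \; \sum_{k=1}^n h_k(\xi)\, A^\rho_t(|\xi-\cdot|^{2k})(\xi).
\end{equation*}
The first sum is controlled via the bound $|M^\rho_{n,l,t}| \lesssim_{d,n,\rho,T} l^{2n+2} t^{2n+2}$ from \eqref{M-rough-est} in the proof of Lemma~\ref{LocUpperboundI}, together with $\hat{F}_{lj} = r^l \hat{f}_{lj}$ inherited from Proposition~\ref{Poisson}: the exponential decay in $l$ dominates the polynomial growth of $M^\rho_{n,l,t}$ and of $|Y_l^j|$, giving an $O(t^{2n+2})$ bound uniform in $\xi$. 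Hence its weighted $L^2$-norm $\bigl(\int_0^T |\cdot|^2 \, dt/t^{2\alpha+1}\bigr)^{1/2}$ is finite and bounded in $\xi$, because $\alpha < 2(n+1)$.

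Subtracting, the remainder $\sum_{k=1}^n h_k(\xi)\, A^\rho_t(|\xi-\cdot|^{2k})(\xi)$ has finite weighted $L^2$-norm in $t$ at almost every $\xi$. The key observation is that $A^\rho_t(|\xi-\cdot|^{2k})(\xi)$ is independent of $\xi$ by rotational invariance, and expanding $1-\cos(\tfrac{t}{T}\theta)$ and $\sin(\tfrac{t}{T}\theta)$ in the formula preceding \eqref{rough-A} yields $A^\rho_t(|\xi-\cdot|^{2k})(\xi) = c_k\, t^{2k}(1 + O(t^2))$ as $t \to 0^+$, with a positive constant $c_k > 0$. An inductive argument in $k$ then applies: assuming $h_1 = \cdots = h_{k-1} = 0$ a.e., if $h_k(\xi) \neq 0$ on a positive-measure set, the leading asymptotic of the sum as $t \to 0^+$ is $c_k\, h_k(\xi)\, t^{2k}$, which would force $\int_0^\delta |h_k(\xi)|^2 t^{4k-2\alpha-1}\, dt < \infty$. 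Since $\alpha > 2n \geq 2k$ this integral diverges, a contradiction. Hence $h_k \equiv 0$ a.e.\ for every $k = 1, \ldots, n$.

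The boundary case $\alpha = 2n$ proceeds identically for $k = 1, \ldots, n-1$, but the last term of the square function \eqref{eq3c} involves $A^\rho_t(G_n)$ rather than $G_n$, so one replaces $h_n$ by $h_n^{(t)}(\xi) := A^\rho_t(G_n)(\xi) - T_n((-\Delta)^n F)(\xi)$. Once the induction eliminates $h_1, \ldots, h_{n-1}$, the integrability reduces to $\int_0^T |h_n^{(t)}(\xi)|^2\, dt/t < \infty$ for a.e.\ $\xi$, and $G_n \in C^\infty(\sphere{d-1})$ yields $A^\rho_t G_n \to G_n$ as $t \to 0$, so the logarithmic divergence of $\int_0 dt/t$ forces $G_n(\xi) = T_n((-\Delta)^n F)(\xi)$ a.e. I expect the main technical subtlety to lie in the clean inductive separation of the $h_k$-contributions by their orders $t^{2k}$, which is resolved by the strict integer gaps between the exponents and the non-vanishing of the leading constants $c_k$.
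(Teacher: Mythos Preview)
Your argument is correct and follows essentially the same route as the paper: isolate the sum $\sum_k h_k(\xi)\, A^\rho_t(|\xi-\cdot|^{2k})(\xi)$ by subtracting off a controlled piece, then force each $h_k=0$ inductively via the divergence of $\int_0 t^{4k-2\alpha-1}\,dt$, with the $\alpha=2n$ endpoint handled by passing to the limit $t\to 0$ in the averaged $n$-th term. The paper bounds the controlled piece by a triangle inequality between two square functions (one via Proposition~\ref{P-square}, one via $P_rf\in C^\infty$ and Proposition~\ref{equinorm}) rather than your direct pointwise $M$-series estimate, and in the $\alpha=2n$ case it applies $A^\rho_t$ to both terms of the last difference (its $\tilde H$) rather than only to $G_n$; these are cosmetic variations of the same argument.
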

\begin{proof}
We set
\begin{equation}\label{H-diverge}
H_k(\xi):=P_rg_k(\xi)-T_k((-\lap)^k P_rf)(\xi)
\end{equation}
and
$$
K(t,\xi):=\sum_{k=1}^nH_k(\xi)A^\rho_t(|\xi-\cdot|^{2k})(\xi),
$$

We have already done for $0<\alpha<2$ case in \cite{s18}, see Theorem \ref{themainshortpaper}.

We start from the case $\alpha\not=2n\geq 2$. Then it holds that
\begin{align*}
&\left(\int_0^TK(t,\xi)^2\frac{dt}{t^{2\alpha+1}}\right)^{\frac{1}{2}}
\leq S^{\rho,T}_\alpha(P_r f, P_r g_1,\cdots, P_r g_n)(\xi)\\
&\qquad\qquad +S^{\rho,T}_\alpha (P_r f,T_1((-\lap) P_rf),\cdots, T_n((-\lap)^n P_rf))(\xi)\\
&\leq P_r  S^{\rho,T}_\alpha(f, g_1,\cdots, g_n)(\xi)\\
&\qquad\qquad+S^{\rho,T}_\alpha (P_r f,T_1((-\lap) P_rf),\cdots, T_n((-\lap)^n P_rf))(\xi)<\infty
\end{align*}
from Proposition \ref{P-square} and $P_rf\in C^\infty(\sphere{d-1})$. 

On the other hand, \eqref{rough-A} guarantees that
$$
K(t,\xi)\sim_{d,n,\rho,T}H_1(\xi)t^2+\cdots+H_n(\xi)t^{2n}.
$$
Therefore we may assume that
$$
K(t,\xi)\stackrel{>}{\sim}_{d,\rho,T}H_1(\xi)t^2.
$$
Then we get
\begin{align*}
\left|H_1(\xi)\right|\left(\int_0^T\frac{dt}{t^{2\alpha-3}}\right)^{\frac{1}{2}}&=\left\{\int_0^T\left|H_1(\xi)t^2\right|^2\frac{dt}{t^{2\alpha+1}}\right\}^{\frac{1}{2}}\\
&\stackrel{<}{\sim}_{d,k,\rho,T}\left(\int_0^TK(t,\xi)^2\frac{dt}{t^{2\alpha+1}}\right)^{\frac{1}{2}}<\infty.
\end{align*}
This implies $H_1(\xi)=0$ because $\int_0^T\frac{dt}{t^{2\alpha-3}}=\infty$ when $\alpha\geq 2$. Then, by induction, we obtain
$$
H_1(\xi)=\cdots=H_n(\xi)=0,
$$
which completes the proof.

The proof for the case $\alpha=2n\geq 2$ proceeds in a similar manner. We introduce
$$
\tilde H(\xi):=A^\rho_t(P_r g_n)(\xi)-A^\rho_t(T_n((-\lap)^n P_rf))(\xi)
$$
and
$$
L(t,\xi):=\sum_{k=1}^{n-1}H_k(\xi)A^\rho_t(|\xi-\cdot|^{2k})(\xi)+\tilde H(\xi)A^\rho_t(|\xi-\cdot|^{2n})(\xi).
$$
Then we get
$$
\left(\int_0^TL(t,\xi)^2\frac{dt}{t^{2\alpha+1}}\right)^{\frac{1}{2}}<\infty
$$
from the assumption. On the other hand, it holds that
$$
L(t,\xi)\sim_{d,n,\rho,T}H_1(\xi)t^2+\cdots+H_{n-1}(\xi)t^{2n-2}+\tilde H(\xi) t^{2n}.
$$
Since $n\geq 2$, we get
$$
H_1(\xi)=\cdots=H_{n-1}(\xi)=0,
$$
which implies \eqref{Pg} for $k=1,\cdots,n-1$. Moreover, we obtain
$$
\tilde H(\xi)=0\quad\Leftrightarrow\quad A^\rho_t(P_r g_n)(\xi)=A^\rho_t(T_n((-\lap)^n P_rf))(\xi).
$$
Since $P_r g_n$ and $T_n((-\lap)^n P_rf\in C^\infty(\sphere{d-1})$, it holds that
$$
\lim_{t\downarrow 0} A^\rho_t(P_r g_n)(\xi)=P_r g_n(\xi)
$$
and
$$
\lim_{t\downarrow 0}  A^\rho_t(T_n((-\lap)^n P_rf))(\xi)=T_n((-\lap)^n P_rf)(\xi).
$$
Thus, we obtain the result for the case $\alpha = 2n$.
\end{proof}

\begin{proof}[Proof of the part from (2) to (1) of Theorem \ref{main}]
From the assumption in (2), Proposition \ref{g-regularize} guarantees \eqref{Pg}. Then we can conclude that the existence of functions $F_k(\xi)\in\Le{2}{\sphere{d-1}}$ such that
$$
g_k(\xi)=T_k (F_k)(\xi)\quad{and}\quad (-\Delta)^{k} P_{r}f\tend F_k(\xi)\quad\text{in $\Le{2}{\sphere{d-1}}$}
$$
as $r\uparrow 1$ by \eqref{Pg} and Proposition \ref{Prfrelation}(2). This follows from the fact that \(T_{k}\) is invertible in \(L^{2}(\mathbb{S}^{d-1})\).  

Consequently, by a standard argument of the distribution theory, we obtain
\[
 (-\Delta)^{k} f = F_{k}\quad\text{in $\calD'(\sphere{d-1})$}
\]
for all $k=1,\cdots,n$, which implies $f\in H^{2n}(\sphere{d-1})$ and 
$$
g_k(\xi)=T_k((-\Delta)^{k} f((\xi)
$$
for all $k=1,\cdots,n$. Thus the conclusion follows from Proposition \ref{equinorm}.
\end{proof}

\section*{Acknowledgement}
This work was supported by Japan Society for the Promotion of Science Grant-in-Aid for Scientific Research Numbers 24K06794.

%
%
%
\end{document}